\numberwithin{equation}{section}
\theoremstyle{plain}
\newtheorem{theorem}{Theorem}[section]
\newtheorem{corollary}[theorem]{Corollary}
\newtheorem{lemma}[theorem]{Lemma}
\theoremstyle{definition}
\newtheorem{definition}[theorem]{Definition}
\newtheorem{remark}[theorem]{Remark}
\newcommand{\E}{\mathbb{E}}
\newcommand{\ud}{\ensuremath{\mathrm{d}}}
\newcommand{\Norm}[1]{\left|\left|  #1   \right|\right|}
\newcommand{\Itos}{It\^{o}'s }
\newcommand{\InPrd}[1]{\left\langle #1 \right\rangle}
\newcommand{\calB}{\mathcal{B}}
\newcommand{\calF}{\mathcal{F}}
\newcommand{\calK}{\mathcal{K}}
\newcommand{\calI}{\mathcal{I}}
\newcommand{\calL}{\mathcal{L}}
\newcommand{\calN}{\mathcal{N}}
\newcommand{\bbN}{\mathbb{N}}
\newcommand*{\one}{{{\rm 1\mkern-1.5mu}\!{\rm I}}}
\newcommand{\bbP}{\mathbb{P}}
\newcommand{\R}{\mathbb{R}}
\newcommand{\RR}{\mathbb{R}}
\def \eref#1{\hbox{(\ref{#1})}}
\DeclareMathOperator{\Lip}{Lip}
\DeclareMathOperator{\LIP}{Lip}
\DeclareMathOperator{\Vip}{\overline{\varsigma}}
\title{
Comparison principle for stochastic heat equation on $\R^d$
}
\author{
{\bf Le Chen}
\quad and\quad   {\bf Jingyu Huang}
\date{\vspace{0em}\small \today}
}
\begin{document}
\maketitle
\begin{abstract}
We establish the strong comparison principle and strict positivity of
solutions to the following nonlinear stochastic heat equation on $\mathbb{R}^d$
\[
\left(\frac{\partial }{\partial t} -\frac{1}{2}\Delta \right) u(t,x) =  \rho(u(t,x))
\:\dot{M}(t,x),
\]
for measure-valued initial data, where $\dot{M}$ is a spatially homogeneous Gaussian noise that is white in time and $\rho$ is Lipschitz continuous. These results are obtained under the condition that 
$\int_{\mathbb{R}^d}(1+|\xi|^2)^{\alpha-1}\hat{f}(\text{d} \xi)<\infty$ for some $\alpha\in(0,1]$,
where $\hat{f}$ is the spectral measure of the noise.
{The weak comparison principle and nonnegativity of solutions to the same equation are obtained 
under Dalang's condition, i.e., $\alpha=0$.}
As some intermediate results, we obtain handy upper bounds for $L^p(\Omega)$-moments of $u(t,x)$ for all $p\ge 2$, and also prove that $u$ is a.s. H\"older continuous with order $\alpha-\epsilon$ in space  and $\alpha/2-\epsilon$ in time for any small $\epsilon>0$. 

\bigskip
\noindent{\it Keywords.} Stochastic heat equation; parabolic Anderson model;
space-time H\"older regularity;
spatially homogeneous noise; comparison principle; measure-valued initial data.
\\

\noindent{\it \noindent AMS 2010 subject classification.}
Primary 60H15; Secondary 35R60, 60G60.
\end{abstract}

\setlength{\parindent}{1.5em}



\section{Introduction}

In this paper, we study the {\em sample-path comparison principle}, or simply {\em comparison principle} of the solutions 
to the following stochastic heat equation (SHE) {with {\em rough initial conditions}},
\begin{align}\label{E:SHE}
\begin{cases}
\displaystyle \left(\frac{\partial }{\partial t} -
\frac{1}{2}\Delta \right) u(t,x) =  \rho(u(t,x))
\:\dot{M}(t,x),&
x\in \R^d,\; t>0, \\
\displaystyle \quad u(0,\cdot) = \mu(\cdot).
\end{cases}
\end{align}
In this equation, $\rho$ is assumed to be a globally Lipschitz continuous function.
{The linear case, i.e., $\rho(u)=\lambda u$, is called 
the {\em parabolic Anderson model} (PAM) \cite{CarmonaMolchanov94}}.
The noise $\dot{M}$ is a Gaussian noise that is white in time and homogeneously colored in space.
Informally,
\[
\E\left[\dot{M}(t,x)\dot{M}(s,y)\right] = \delta_0(t-s)f(x-y)
\]
where $\delta_0$ is the Dirac delta measure with unit mass at zero and $f$ is a ``correlation function''
i.e., a nonnegative and nonnegative definite function that is not identically zero.
The Fourier transform of $f$ is denoted by $\hat{f}$
\[
\hat{f}(\xi)= \calF f (\xi) =\int_{\R^d}\exp\left(- i \: \xi\cdot x \right)f(x)\ud x.
\]
In general, $\hat{f}$ is again a nonnegative and nonnegative definite measure, which is usually called  the {\it spectral measure}.
{The precise meaning of the ``rough initial conditions/data'' are specified as follows.} {We first note that by the Jordan decomposition, 
any signed Borel measure $\mu$ can be decomposed as} $\mu=\mu_+-\mu_-$ where
$\mu_\pm$ are two non-negative Borel measures with disjoint support. Denote $|\mu|:= \mu_++\mu_-$.
The rough initial data refers to any signed Borel measure $\mu$ such that 
\begin{align}\label{E:J0finite}
\int_{\R^d} e^{-a |x|^2} |\mu|(\ud x)
<+\infty\;, \quad \text{for all $a>0$}\;,
\end{align}
where $|x|=\sqrt{x_1^2+\dots+x_d^2}$ denotes the Euclidean norm.
It is easy to see that condition \eqref{E:J0finite} is equivalent to the condition
that the solution to the homogeneous equation -- $J_0(t,x)$ defined in \eqref{E:J0} below -- exists for all $t>0$ and $x\in\R^d$.

The comparison principle refers to the property that if two initial conditions are comparable, then 
the corresponding solutions to the stochastic partial differential equations are also comparable. 
For any Borel measure $\mu$ on $\R^d$, ``$\mu\ge 0$'' has its obvious meaning that $\mu$ is a nonnegative measure and ``$\mu> 0$'' refers to the fact that $\mu\ge 0$ and $\mu$ is nonvanishing, i.e., $\mu\ne 0$.
Let $u_1$ and $u_2$ be two solutions starting from two measures $\mu_1$ and $\mu_2$, respectively. 
We say that \eqref{E:SHE} satisfies the {\em weak comparison principle} if $u_1(t,x)\le u_2(t,x)$ a.s. for all $t>0$ and $x\in\R^d$ whenever $\mu_1\le\mu_2$.
Similarly, we say that \eqref{E:SHE} satisfies the {\em strong comparison principle} if $u_1(t,x)< u_2(t,x)$ for all $t>0$ and $x\in\R^d$ a.s. whenever $\mu_1<\mu_2$.
Note that when $\rho(u)=\lambda u$, it is relatively easier to establish the weak comparison principle since the solution can be approximated by its regularized version, which admits a Feynman-Kac formula; see \cite{HHN,HHNT,HLN}.

Most strong comparison principles are obtained through Mueller's original work \cite{Mueller91}, where he proved the case when $d=1$, $\dot{M}$ is the space-time white noise, $\rho(u)=|u|^\gamma$ (for all $\gamma\le 1$), and the initial data is a bounded function. In \cite{Shiga}, Shiga  studied the same equation as that in \cite{Mueller91} except that $\rho$ is assumed to be Lipschitz and there can be a drift term. By using concentration of measure arguments for discrete directed polymers in Gaussian environments, Flores established in \cite{Flores} the strict positivity of solution to 1-d PAM with Dirac delta initial data. 
Following arguments by Mueller and Shiga, Chen and Kim extended these results in \cite{CK14Comp} to allow both fractional Laplace operators and rough initial data. Recently, by using paracontrolled distributions, Gubinelli and Perkowski gave an intrinsic proof of the strict positivity; see \cite{GP15KPZ}.
Their proof does not depend on the details of noise, though they require the initial data to be a function that is strict positive anywhere.

When $d\ge 2$, in order to study a random field solution, the noise has to have some color in space.
Equation \eqref{E:SHE} has been much studied since the introduction by Dawson and Salehi \cite{DS80} {as a model for the growth of a population in a random environment.} 
In \cite{Dalang99, DalangQuer}, it is shown that if the initial condition is a bounded function, and  under some integrability condition on $\hat{f}$, now called {\it Dalang's} condition, i.e., 
\begin{align}\label{E:Dalang}
\Upsilon(\beta):=(2\pi)^{-d}\int_{\R^d} \frac{\hat{f}(\ud \xi)}{\beta+|\xi|^2}<+\infty \quad \text{for some and hence for all $\beta>0$,}
\end{align}
there is a unique random field solution to equation \eqref{E:SHE}. 
This equation has been extensively studied; see, e.g., \cite{CHKK16Bad,FK13SHE,HHN,HLN}.
Recently, Chen and Kim showed that Dalang's condition \eqref{E:Dalang} also guarantees
an $L^2(\Omega)$-continuous random field solution starting from rough initial conditions; see \cite{CK15SHE}. 
To the best of our knowledge, comparison principle in this setting is much less known, though people believe that it is true.
In \cite{TessitoreZabczyk98}, 
Tessitore and Zabczyk proved the strict positivity for the case when $\hat{f}$ belongs to $L^p(\R^d)$ for some $p\in [1,d/(d-2))$. Clearly, this condition excludes the important Riesz kernel case, i.e., $f(x)=|x|^{-\beta}$ with $\beta\in (0,2\wedge d)$.
Indeed, we will show that under Dalang's condition \eqref{E:Dalang}, if $\rho(0)=0$,
then the solution $u(t,x)$ starting from any nonnegative rough initial data is a.s. nonnegative for any $t>0$ and $x\in\R^d$. 
Moreover, if the nonnegative rough initial data is nonvanishing and $f$ satisfies 
\begin{align}\label{E:DalangAlpha}
\int_{\R^d}\frac{\hat{f}(\ud\xi)}{\left(1+|\xi|^2\right)^{1-\alpha}}<\infty, \quad\text{for some $\alpha\in(0,1]$,}
\end{align}
then we are able to establish the strict positivity of $u(t,x)$ through the following small-ball probability estimate:
\[
\bbP \left( u(t,x)< \epsilon \right) \leq A \exp \left( -A |\log \epsilon|^{\alpha} \left(\log |\log \epsilon|\right)^{1+\alpha} \right)\,.
\]
Similar small-ball probabilities in various settings can be found in \cite{CK14Comp,CJK12,Flores,MuellerNualart}.
These nonnegativity statements can be translated into comparison statements by 
considering $v=u_1-u_2$.

Condition \eqref{E:DalangAlpha} is natural since in a recent paper \cite{CHKK16Bad}, it is shown that Dalang's condition \eqref{E:Dalang} alone cannot guarantee the existence of a continuous version of the solution.
There might be solutions that behave so badly that they may hit zero. Whether this phenomenon does happen is still not clear to us and it is left for future exploration. For the moment, we are content with this slightly strong condition \eqref{E:DalangAlpha}. 
Indeed, if the initial condition is a bounded function, Sanz-Sol\'e and Sarr\`a \cite{SanzSoleSarra02} showed that
condition \eqref{E:DalangAlpha} guarantees that 
the solution is a.s. H\"older continuous
with order $\alpha-\epsilon$ in space  and $\alpha/2-\epsilon$ in time for any small $\epsilon>0$.
In this paper, we have extended this result for rough initial conditions.
The space-time white noise case is proved in \cite{ChenDalangHolder}.

In all these studies, the moment bounds/formulas play an important role. The upper bounds for the second moments under Dalang's condition \eqref{E:Dalang} for rough initial conditions is obtained in \cite{CK15SHE}. In this paper, we extend this bound to obtain similar upper bounds for all $p$-th moments, $p\ge 2$. Using these moments upper bounds, we establish the (weak) comparison principle, whose property is assumed in \cite{CK15SHE} in order to obtain some nontrivial lower bounds for the second moments. 
Note when $\rho(u)=\lambda u$, the $p$-th moment admits a Feynman-Kac representation, which has been exploited to study the intermittency phenomenon in \cite{HHN,HHNT,HLN}.

%


\subsection{Main results}
The solution to \eqref{E:SHE} is understood as the mild form
\begin{align}\label{E:mild}
u(t,x)&= J_0(t,x) + \int_0^t \int_{\R^d} G(t-s,x-y)\rho(u(s,y))M(\ud s,\ud y),
\end{align}
where $J_0(t,x)$ is the solution to the homogeneous equation
\begin{align}\label{E:J0}
J_0(t,x):= \left(\mu* G(t,\cdot)\right)(x)= \int_{\R^d}G(t,x-y)\mu(\ud y)
\end{align}
and
\begin{align}
G(t,x)= \left(2\pi t\right)^{-d/2} \exp\left(-\frac{|x|^2}{2t}\right).
\end{align}

We will prove seven theorems listed as follows:

\begin{theorem}[Weak comparison principle]
\label{T:WComp}
Assume that $f$ satisfies Dalang's condition \eqref{E:Dalang}.
Let $u_1(t,x)$ and $u_2(t,x)$ be two solutions to \eqref{E:SHE}
with the initial measures $\mu_1$ and $\mu_2$ that satisfy \eqref{E:J0finite}, respectively. 
If $\mu_1 \le \mu_2$, then
\begin{equation}\label{E: W comp point}
\bbP \left(u_1(t,x)\leq u_2(t,x)\right)=1\,, \quad \text{for all}\  t \geq 0 \ \text{and}\  x\in \RR^d\,.
\end{equation} 
Moreover, if the paths of $u_1(t,x)$ and $u_2(t,x)$ are a.s. continuous, then 
\begin{equation}\label{E: W comp path}
\bbP \left(u_1(t,x)\leq u_2(t,x) \ \text{for all}\  t \geq 0 \ \text{and}\  x\in \RR^d\right)=1\,. 
\end{equation} 
\end{theorem}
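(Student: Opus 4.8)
The plan is to prove the pointwise inequality \eqref{E: W comp point} by a mollification argument — passing to a regularized equation with bounded smooth spatial covariance and bounded smooth initial data, where an It\^o--Kato estimate is available — and then to deduce the path statement \eqref{E: W comp path} by continuity. Throughout, the key structural facts exploited are: (i) $G>0$, so ordering is preserved by the homogeneous flow; (ii) $-\tfrac12\Delta$ generates an order-preserving semigroup (Kato's inequality); (iii) the only nonlinearity is the noise coefficient, which being Lipschitz satisfies $|\rho(a)-\rho(b)|\le\Lip(\rho)\,|a-b|$; and (iv) the $L^p(\Omega)$ moment upper bounds established elsewhere in this paper give the uniform control needed to pass to the limit.

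First I would regularize. Let $\varphi_n = n^d\varphi(n\,\cdot)$ with $\varphi\in C_c^\infty(\R^d)$ nonnegative, put $\widetilde\varphi_n(x)=\varphi_n(-x)$ and $f_n:=f*\varphi_n*\widetilde\varphi_n$. Each $f_n$ is a bounded smooth nonnegative and nonnegative-definite correlation with $\widehat f_n=|\widehat\varphi_n|^2\widehat f\le\widehat f$, so Dalang's condition \eqref{E:Dalang} holds for $f_n$ with $\Upsilon_n(\beta)\le\Upsilon(\beta)$ uniformly in $n$, and $\widehat f_n\uparrow\widehat f$. Simultaneously replace $\mu_i$ by $\mu_i^n:=\mu_i*\varphi_n$, which are bounded, smooth, still satisfy $\mu_1^n\le\mu_2^n$ and satisfy \eqref{E:J0finite} uniformly in $n$. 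Let $u_i^n$ be the solution to \eqref{E:SHE} with noise of covariance $f_n$ (realized as $\dot M_n=\dot M*\varphi_n$) and initial datum $\mu_i^n$. A standard stability estimate — a Gronwall argument in the mild formulation, using the $n$-uniform moment bounds under Dalang's condition, $\widehat f_n\uparrow\widehat f$, and dominated convergence — gives $\E\big[|u_i^n(t,x)-u_i(t,x)|^2\big]\to 0$ for every fixed $(t,x)$ with $t>0$. Hence it suffices to prove $u_1^n(t,x)\le u_2^n(t,x)$ a.s.

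For the regularized equation, boundedness of $f_n$ and of $\mu_i^n$ gives a modification of $u_i^n$ that is continuous on $(0,\infty)\times\R^d$ and solves $\partial_t u_i^n=\tfrac12\Delta u_i^n+\rho(u_i^n)\dot M_n$ in the variational sense. Put $w^n:=u_1^n-u_2^n$, so $w^n(0,\cdot)=\mu_1^n-\mu_2^n\le 0$. Fix $T>0$ and work in a weighted space $H_T=L^2(\R^d,\varrho_T(x)\,\ud x)$ with $\varrho_T$ an exponential-type weight adapted to \eqref{E:J0finite} (so that $\varrho_T$ dominates $J_0(t,\cdot)^2$ for $t\le T$, and $\Delta\varrho_T\le C_T\varrho_T$ in the sense of distributions, possibly after coarsening the weight). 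Let $g\in C^2(\R)$ be nonnegative and convex with $g\equiv 0$ on $(-\infty,0]$, $g(r)=r^2$ for $r\in(0,1]$, and $g''(r)\,r^2\le C g(r)$, and apply It\^o's formula (after the usual smoothing of $g$) to $\Phi(t):=\int_{\R^d}g(w^n(t,x))\,\varrho_T(x)\,\ud x$. The initial term vanishes since $w^n(0,\cdot)\le 0$. After integrating by parts twice, the drift term equals $-\tfrac12\int g''(w^n)|\nabla w^n|^2\varrho_T + \tfrac12\int g(w^n)\Delta\varrho_T\le\tfrac{C_T}{2}\Phi(t)$ by convexity of $g$ and the property of $\varrho_T$. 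The stochastic term is a mean-zero local martingale. The quadratic-variation term is supported on $\{w^n>0\}=\{u_1^n>u_2^n\}$, where $|\rho(u_1^n)-\rho(u_2^n)|\le\Lip(\rho)\,w^n$, so — using $g''(r)r^2\le Cg(r)$ and boundedness of $f_n$ (the resulting constant may depend on $n$, which is harmless) — in expectation it is at most $C_n\int_0^t\E\Phi(s)\,\ud s$. Gronwall's lemma then forces $\E\Phi(t)=0$ on $[0,T]$, i.e.\ $w^n(t,\cdot)\le 0$ a.e.\ in $x$, a.s.; by continuity for $t>0$ and arbitrariness of $T$, $u_1^n\le u_2^n$ everywhere on $(0,\infty)\times\R^d$, a.s. In particular $u_1^n(t,x)\le u_2^n(t,x)$ a.s.\ for each fixed $(t,x)$, and letting $n\to\infty$ along the $L^2(\Omega)$ convergence above yields \eqref{E: W comp point} for $t>0$; the case $t=0$ is just $\mu_1\le\mu_2$.

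The path statement \eqref{E: W comp path} follows routinely: fix a countable dense $D\subset(0,\infty)\times\R^d$; by \eqref{E: W comp point} and countable subadditivity $u_1(q)\le u_2(q)$ for all $q\in D$ on a full-probability event, and intersecting with the full-probability event on which $u_1,u_2$ have continuous paths and passing to limits along $D$ gives the simultaneous inequality. The main obstacle is the regularized comparison step — making It\^o's formula rigorous in the weighted, infinite-dimensional setting: the choice of weight $\varrho_T$ compatible with the admissible growth \eqref{E:J0finite} (which may force a time-dependent weight or a scale of spaces handled by a weighted Gronwall lemma), the smoothing of $g$, and the verification that the quadratic-variation term is genuinely absorbed into $\int_0^t\E\Phi(s)\,\ud s$. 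The other delicate point, confined to the convergence step, is obtaining the $n$-uniform moment bounds under Dalang's condition \eqref{E:Dalang} alone — allowing a possibly singular $f$ — which is precisely where this paper's $L^p(\Omega)$ moment estimates enter; note also that, writing $\rho(u_2)-\rho(u_1)=b\,(u_2-u_1)$ with $b$ adapted and $|b|\le\Lip(\rho)$, \eqref{E: W comp point} is equivalent to the nonnegativity of the solution of a linear equation with bounded coefficient vanishing at $0$, which is the form in which the result is used later.
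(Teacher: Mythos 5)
Your outer architecture (regularize, prove comparison for the regularized problem, pass to the limit, then get the path statement by continuity) is the same as the paper's, but your core comparison step is genuinely different. The paper replaces $\tfrac12\Delta$ by the bounded operator $\Delta^\epsilon=({\bf G}(\epsilon)-{\bf I})/\epsilon$ and proves comparison for the resulting equation \eqref{E:approSHE} by following the SDE-type argument of \cite{CK14Comp} (its Step 1), and then removes the three regularizations in Steps 2--4 using Theorem \ref{T:Approx}; you instead keep the Laplacian and run a weighted-$L^2$ energy/It\^o argument with a convex $g$ vanishing on $(-\infty,0]$, i.e.\ a stochastic maximum-principle (Kato-inequality) argument. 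For smooth bounded covariance and bounded initial data this is a legitimate alternative and somewhat more self-contained (it does not lean on the discrete comparison of \cite{CK14Comp}), at the price of rigorously justifying the infinite-dimensional It\^o formula in a variational/weighted framework, which you acknowledge and which is doable when $f_n(0)<\infty$ and the data are bounded.

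Two concrete points do need repair. First, your claim that $\mu_i^n=\mu_i*\varphi_n$ is bounded is false under \eqref{E:J0finite}: the condition admits, e.g., $|\mu|(\ud x)=e^{|x|^{2-\delta}}\ud x$, so the mollified data are smooth but grow almost Gaussian-fast. This is not cosmetic for your scheme, because then $J_0$ and the moments of $u_i^n$ grow at the same rate, and no weight can simultaneously make $\E\,\Phi(t)$ finite and satisfy $\Delta\varrho_T\le C_T\varrho_T$ (any $\varrho$ decaying faster than exponentially violates the latter, since $\Delta e^{-|x|^p}\sim |x|^{2p-2}e^{-|x|^p}$ for $p>1$); you would be forced into time-dependent Gaussian weights, i.e.\ Tychonoff-class machinery. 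The paper avoids this with the cutoff $\psi_\epsilon$ of \eqref{E:psi}: the approximating data $((\mu_i\psi_\epsilon)*G(\epsilon,\cdot))$ are genuinely bounded, still ordered, and the corresponding convergence is exactly part (1) of Theorem \ref{T:Approx}. Second, your stability step --- a single sup-norm Gronwall handling the noise mollification and the data mollification at once --- does not close for rough data, since $\sup_x\Norm{u(t,x)}_2=\infty$ in general; the paper proves the data-approximation limit pointwise via the two-point-correlation recursion of Lemma \ref{L:TwoPoint} together with dominated convergence, and uses the sup-norm Gronwall only for the noise mollification with bounded data (part (2) of Theorem \ref{T:Approx}). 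So the order of limits matters: first reduce to bounded data by the cutoff, then mollify the noise, as in Steps 3--4 of the paper. With these repairs your energy-method proof of the regularized comparison, the limiting argument, and your derivation of \eqref{E: W comp path} from \eqref{E: W comp point} all go through.
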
     

{
If $\rho(0)=0$, then $u\equiv 0$ is the unique solution to \eqref{E:SHE} starting from $\mu=0$. Hence, we have the following corollary:
\begin{corollary}[Nonnegativity]
Assume that $f$ satisfies Dalang's condition \eqref{E:Dalang} and $\rho(0)=0$.
Let $u$ be the solution to \eqref{E:SHE}
with the initial measure $\mu$ that satisfies \eqref{E:J0finite}. 
If $\mu \ge 0$, then
\begin{equation}\label{E:Nonneg}
\bbP \left(u(t,x)\ge 0\right)=1\,, \quad \text{for all}\  t \geq 0 \ \text{and}\  x\in \RR^d\,.
\end{equation} 
Moreover, if the path of $u(t,x)$ are a.s. continuous, then 
\begin{equation}\label{E:Nonneg_path}
\bbP \left(u(t,x)\geq 0 \ \text{for all}\  t \geq 0 \ \text{and}\  x\in \RR^d\right)=1\,. 
\end{equation}  
\end{corollary}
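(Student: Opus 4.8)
The plan is to obtain this Corollary as an immediate consequence of Theorem~\ref{T:WComp}, by comparing $u$ with the trivial solution issued from the zero measure. First I would set $\mu_1 := 0$, the zero Borel measure, which vacuously satisfies \eqref{E:J0finite}, and $\mu_2 := \mu$. The one substantive observation is that, when $\rho(0)=0$, the identically zero field $u_1 \equiv 0$ solves the mild equation \eqref{E:mild} with initial datum $\mu_1 = 0$: indeed $J_0(t,x) = (\mu_1 * G(t,\cdot))(x) = 0$ by \eqref{E:J0}, and the stochastic integral $\int_0^t\int_{\R^d} G(t-s,x-y)\,\rho(0)\,M(\ud s,\ud y)$ vanishes since $\rho(0)=0$. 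By the uniqueness of the random field solution under Dalang's condition \eqref{E:Dalang} (see \cite{Dalang99,CK15SHE}), $u_1\equiv 0$ is \emph{the} solution corresponding to $\mu_1=0$.

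Since $\mu_1 = 0 \le \mu = \mu_2$, Theorem~\ref{T:WComp} then applies verbatim. The pointwise conclusion \eqref{E: W comp point} reads $\bbP(u_1(t,x)\le u_2(t,x))=1$, i.e.\ $\bbP(u(t,x)\ge 0)=1$, for every fixed $t\ge 0$ and $x\in\R^d$, which is exactly \eqref{E:Nonneg}. For the pathwise statement \eqref{E:Nonneg_path}, the field $u_1\equiv 0$ is trivially a.s.\ continuous in $(t,x)$, so as soon as $u$ itself has a.s.\ continuous paths the hypothesis of the second half of Theorem~\ref{T:WComp} is satisfied; invoking \eqref{E: W comp path} gives $\bbP(u(t,x)\ge 0 \ \text{for all}\ t\ge0\ \text{and}\ x\in\R^d)=1$.

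I do not anticipate any genuine obstacle here: the argument is a direct specialization of Theorem~\ref{T:WComp}. The only place where a little care is needed is the identification of $u_1\equiv 0$ as the solution starting from $\mu=0$, which relies only on the hypothesis $\rho(0)=0$ together with the uniqueness of the mild solution in the appropriate class; everything else is a verbatim reading of the two conclusions of Theorem~\ref{T:WComp}.
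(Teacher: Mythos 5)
Your proposal is correct and is exactly the paper's argument: the authors derive the corollary from Theorem \ref{T:WComp} by noting, as you do, that when $\rho(0)=0$ the identically zero field is the unique solution starting from $\mu=0$, and then comparing with $u$. Nothing further is needed.
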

}

\begin{theorem}[Strong comparison principle]
\label{T:SComp}
Assume that $f$ satisfies \eqref{E:DalangAlpha} for some $\alpha\in(0,1]$.
Let $u_1(t,x)$ and $u_2(t,x)$ be two solutions to \eref{E:SHE} with the initial data $\mu_1$ and $\mu_2$, respectively. 
Then the fact that $\mu_1 < \mu_2$ implies 
\begin{equation}
\bbP \left(u_1(t,x)< u_2(t,x)\ \text{for all}\ t>0 \ \text{and}\  x \in \RR^d \: \right)=1.
\end{equation}
\end{theorem}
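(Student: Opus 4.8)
The plan is to reduce the strong comparison principle to a strict-positivity statement and then to establish the latter through a small-ball probability estimate, following the scheme of Mueller and Shiga as developed for colored noise and rough data by Chen and Kim \cite{Mueller91,Shiga,CK14Comp}.

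Set $v:=u_2-u_1$. Writing
\[
a(s,y):=\begin{cases}\dfrac{\rho(u_2(s,y))-\rho(u_1(s,y))}{u_2(s,y)-u_1(s,y)},& u_1(s,y)\ne u_2(s,y),\\[1.2ex] 0,& \text{otherwise},\end{cases}
\]
so that $\rho(u_2(s,y))-\rho(u_1(s,y))=a(s,y)\,v(s,y)$ and $|a|\le\Lip(\rho)$, subtraction of the two mild formulations \eqref{E:mild} shows that $v$ solves
\[
v(t,x)=J_0^v(t,x)+\int_0^t\!\!\int_{\R^d}G(t-s,x-y)\,a(s,y)\,v(s,y)\,M(\ud s,\ud y),\qquad J_0^v:=(\mu_2-\mu_1)*G(t,\cdot),
\]
where the stochastic integrand is admissible because $|a\,v|\le\Lip(\rho)\,|v|$ and $v$ inherits the $L^p(\Omega)$ moment upper bounds proved earlier in the paper (and $\mu_2-\mu_1$ satisfies \eqref{E:J0finite} since $|\mu_2-\mu_1|\le|\mu_1|+|\mu_2|$). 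By Theorem~\ref{T:WComp} in its pathwise form (applicable since \eqref{E:DalangAlpha} forces, through the H\"older regularity theorem, continuous versions of $u_1,u_2$) we have $v\ge0$ a.s.; moreover $\mu_2-\mu_1$ is a nonzero nonnegative measure, so $J_0^v(t,x)>0$ for every $t>0$ and $x\in\R^d$. Hence it suffices to prove that any a.s.\ nonnegative solution $v$ of an equation of this form with $J_0^v>0$ on $(0,\infty)\times\R^d$ is a.s.\ strictly positive there.

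The crux is the small-ball bound announced in the introduction, in the form adapted to the infimum over a compact set: for each compact $\calK\subset(0,\infty)\times\R^d$ there are $A=A(\calK)>0$ and $\epsilon_0>0$ with
\[
\bbP\Bigl(\inf_{(t,x)\in\calK}v(t,x)<\epsilon\Bigr)\le A\exp\bigl(-A\,|\log\epsilon|^{\alpha}(\log|\log\epsilon|)^{1+\alpha}\bigr),\qquad 0<\epsilon<\epsilon_0.
\]
I would prove this by a bootstrap over a partition of a time interval: restarting the mild equation at intermediate times $t_k$, one propagates a lower bound of the shape ``$v(t_k,\cdot)\ge c_k$ on a ball'' using that the heat semigroup, applied to a nonnegative function bounded below on a large enough ball, loses only a controlled fraction of its mass (here $v\ge0$ is used to discard the mass outside the ball), while the stochastic-integral increments over the disjoint time layers are conditionally independent and, by the Burkholder--Davis--Gundy inequality together with the moment upper bounds for $v$ and, crucially, condition \eqref{E:DalangAlpha} (which yields $\int(1-\cos(\xi\cdot h))(1+|\xi|^2)^{-1}\hat f(\ud\xi)\lesssim|h|^{2\alpha}$ after the heat smoothing), have space--time moduli that convert, via a Garsia--Rodemich--Rumsey/Kolmogorov argument, into sub-Gaussian tail bounds for their suprema over the relevant boxes. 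Balancing the number of layers, the layer length, the sequence of lower bounds $c_k$ and the ball radii against $\epsilon$ produces the stated estimate, and the exponent $\alpha$ enters precisely through this balance (how fine the time mesh must be so that the noise stays below $c_k$ versus how many layers are needed, together with the spatial modulus of the noise). Taking a ball centred at a generic $x$ also gives the pointwise form $\bbP(v(t,x)<\epsilon)\le A\exp(-A|\log\epsilon|^{\alpha}(\log|\log\epsilon|)^{1+\alpha})$.

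Given the small-ball estimate, the global conclusion is routine. For a compact $\calK\subset(0,\infty)\times\R^d$, cover it by a \emph{fixed} finite number $N(\calK)$ of translates of a small space--time box, apply the infimum estimate on each, and union bound; since $N(\calK)$ does not depend on $\epsilon$ while the right-hand side tends to $0$ as $\epsilon\downarrow0$,
\[
\bbP\Bigl(\inf_{\calK}v=0\Bigr)=\lim_{\epsilon\downarrow0}\bbP\Bigl(\inf_{\calK}v<\epsilon\Bigr)=0
\]
(the a.s.\ continuity of $v$ makes the infimum meaningful). Exhausting $(0,\infty)\times\R^d$ by countably many such compacts and intersecting the resulting full-probability events yields $u_1(t,x)=u_2(t,x)-v(t,x)<u_2(t,x)$ for all $t>0$ and $x\in\R^d$ a.s., which is the assertion. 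I expect the main obstacle to be the small-ball estimate itself: obtaining sub-Gaussian control of the suprema of the noise increments with constants uniform across the $n$ time layers, so that the $n$-fold iteration does not lose too much, and then choosing $n$, the layer length, $(c_k)$ and the ball radii so as to land on the exponent $|\log\epsilon|^{\alpha}(\log|\log\epsilon|)^{1+\alpha}$.
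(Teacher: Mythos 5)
Your overall scheme coincides with the paper's: set $v=u_2-u_1$, note it solves \eqref{E:SHE} with a Lipschitz nonlinearity vanishing at zero (the paper uses $\tilde\rho(v)=\rho(v+u_1)-\rho(u_1)$, which is the same reduction as your random coefficient $a$), invoke the weak comparison principle to get $v\ge 0$, and then run a layered induction in time in which the heat semigroup spreads a lower bound on an expanding cube while the noise increments on each thin layer are controlled through the moment bound \eqref{E:MomAlpha} and a Kolmogorov-type continuity estimate, with the exponent $|\log\epsilon|^{\alpha}\left(\log|\log\epsilon|\right)^{1+\alpha}$ arising from optimizing the Chebyshev bound over $p\sim[m\log m]^{\alpha}$. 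This is exactly the paper's Lemma \ref{L:induction} and the events $A_k,B_k$ in Section \ref{S:SComp}, and your final covering/limit argument matches how Theorem \ref{T:Pos} yields positivity. (One small point: the bound \eqref{E:MomAlpha} gives stretched-exponential, not sub-Gaussian, tails for the suprema; the stated exponent comes from the $p$-optimization, not from Gaussian concentration — this does not change the outcome.)

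The genuine gap is your claim that the bootstrap applies to \emph{any} nonnegative $v$ with $J_0^v>0$, i.e.\ directly for measure-valued $\mu_2-\mu_1$. The first layer of the induction requires a lower bound of the type $(\mu*G(s,\cdot))(x)\ge 2\beta\,\one_{Q(\ell+M/m)}(x)$ for $s\in[t/(2m),t/m]$ with $\beta$ \emph{uniform in the mesh parameter} $m$ (Lemma \ref{lem: comp initial}), and it requires the moment constants feeding the Kolmogorov estimate to be uniform down to time $0$; both hold for bounded (indicator) initial data but fail for rough data. For instance, with $\mu=\delta_0$ the deterministic lower bound on a fixed cube over the first layer decays like $e^{-cm}$, which destroys the balance against the noise tail $\exp\left(-\Theta m^{\alpha}(\log m)^{1+\alpha}\right)$, and $\sup_x\Norm{u(s,x)}_p$ blows up as $s\to 0$. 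This is why the paper has a separate Case II: it first proves that $u(\epsilon,\cdot)\not\equiv 0$ a.s.\ for every $\epsilon>0$ — a step that genuinely needs Theorem \ref{T:WeakSol} (weak convergence of $u(t,\cdot)$ to $\mu$ as $t\to 0$) combined with a Markov-property contradiction — and then uses the a.s.\ continuity from Theorem \ref{T:Holder} to dominate $\beta\,\one_{x+Q(c)}$ with random $\beta,c$, restarting the equation at time $\epsilon$ so that the indicator-data case applies. Your proposal never addresses this passage from ``$J_0^v>0$'' to a usable, mesh-uniform starting lower bound and never uses Theorem \ref{T:WeakSol}; as written, the small-ball estimate you take as the crux is exactly what is unproved in the rough-initial-data regime that the theorem is meant to cover.
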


\begin{theorem}[Strict positivity]
\label{T:Pos}
Assume that $f$ satisfies \eqref{E:DalangAlpha} for some $\alpha\in(0,1]$ and $\rho(0)=0$.
Let $u$ be the solution to \eqref{E:SHE} with initial measure $\mu>0$
that satisfies \eqref{E:J0finite}.
Then for any compact set $K \subset \RR_+^*\times \RR^d$, 
there exists a finite constant $A>0$ which only depends on $K$ such that for all $\epsilon >0$ small enough,
\begin{equation}\label{E:SmallBall}
\bbP \left( \inf_{(t,x)\in K} u(t,x)< \epsilon \right) \leq A \exp \left( -A |\log \epsilon|^{\alpha} \left(\log |\log \epsilon|\right)^{1+\alpha} \right)\,.
\end{equation}
\end{theorem}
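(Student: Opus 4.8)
\emph{Strategy.} The plan is to run the ``propagation of positivity'' iteration of Mueller \cite{Mueller91} and Shiga \cite{Shiga}, in the form developed for measure-valued initial data by Chen--Kim \cite{CK14Comp} and Moreno Flores \cite{Flores}, now fed by the two auxiliary results of this paper: the $L^p(\Omega)$-moment bounds and the space--time H\"older regularity (order $\alpha-\epsilon$ in space, $\alpha/2-\epsilon$ in time). Throughout, $u\ge 0$ a.s.\ by the nonnegativity corollary to Theorem~\ref{T:WComp} (applicable since $\rho(0)=0$), so that $|\rho(u)|\le L\,u$ with $L:=\Lip(\rho)$: the noise coefficient is \emph{small wherever $u$ is small}, and this is the mechanism behind \eqref{E:SmallBall}. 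By compactness it suffices to treat a box $K=[2\kappa,2\kappa+T]\times \overline{B(0,R_K)}$, the general case following by covering $K$ with finitely many such boxes. Restarting \eqref{E:mild} at time $s$ gives, for $\kappa\le s<t$,
\[
u(t,x)=\bigl(G(t-s,\cdot)\ast u(s,\cdot)\bigr)(x)+\int_s^t\!\!\int_{\R^d}G(t-r,x-y)\,\rho\bigl(u(r,y)\bigr)\,M(\ud r,\ud y),
\]
so the idea is to carry a pointwise lower bound across a finite chain of short time layers covering $[2\kappa,2\kappa+T]$.

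\emph{Two preliminary estimates.} First, since $\mu>0$ and $G(t,\cdot)>0$ everywhere, $J_0(\kappa,\cdot)\ge c_0>0$ on $B(0,R_0)$ for some $c_0,R_0$ depending only on $K$; moreover, if $g\ge\theta$ on $B(0,R)$ then $\bigl(G(h,\cdot)\ast(g\,\one_{B(0,R)})\bigr)(x)\ge\theta\bigl(1-C\exp(-c(R-R')^2/h)\bigr)$ for $|x|\le R'<R$, so that losing radius of order $\sqrt h$ per layer keeps the deterministic part of the restarted formula above $\tfrac34$ of the current lower bound. Second, under \eqref{E:DalangAlpha},
\[
\int_0^{h}\bigl\|G(r,\cdot)\bigr\|_f^2\,\ud r \;\lesssim\; \int_{\R^d}\frac{1-e^{-h|\xi|^2}}{|\xi|^2}\,\hat f(\ud\xi)\;\le\; C_f\,h^{\alpha}\qquad(0<h\le 1),
\]
because $\dfrac{1-e^{-h|\xi|^2}}{|\xi|^2}\le C\,h^{\alpha}\,(1+|\xi|^2)^{-(1-\alpha)}$; this is exactly why the stochastic convolution over a layer of thickness $h$ has variance of order $h^{\alpha}$.

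\emph{One-step lemma.} The core step is: if $u(s,\cdot)\ge\theta$ on $B(0,R)$, then for $R'=R-c\sqrt h$, $h$ small and a suitable ratio $\lambda\in(0,1)$,
\[
\bbP\Bigl(\inf_{\substack{s\le t\le s+h\\ |x|\le R'}}u(t,x)<\lambda\theta\Bigr)\;\le\;\Phi(h,R)\,e^{-c\,q(\theta,h)}\;+\;\bbP(H>H_0)\;+\;\bbP\bigl(\sup_K u>\Lambda_0\bigr),
\]
obtained as follows. Write $u=J+I$, $J$ the restarted heat flow of $u(s,\cdot)\one_{B(0,R)}$, so $J\ge\tfrac34\theta$ on the layer and the bad event forces $I<(\lambda-\tfrac34)\theta<0$ somewhere. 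To control $I$, localize it to a ``cone of influence'': at a point where $u$ is about to drop to $\lambda\theta$, the H\"older seminorm $H$ of $u$ on $K$ bounds $u$ by $\lambda\theta+C H(K_0\sqrt h)^{\alpha'}$ ($\alpha'<\alpha$) throughout the $K_0\sqrt h$-ball feeding that point, while the part of $I$ coming from outside that ball is controlled by the global upper bound $\Lambda_0$ times a Gaussian factor $e^{-cK_0^2}$; balancing $K_0$—which forces $K_0$ of order $\sqrt{\log(\Lambda_0/\theta)}$, the source of the logarithmic corrections in the final rate—yields an effective noise coefficient of order $\theta+H(\log(\Lambda_0/\theta))^{\alpha'/2}h^{\alpha'/2}$. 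Combining this with the $h^{\alpha}$-estimate above, the exponential martingale inequality over the layer (applied up to the first exit of $u$ from a corridor $[\lambda\theta,\Lambda_0]$, which disposes of the self-referential dependence of $\rho(u)$ on $u$), and a chaining bound producing the polynomial-in-$(1/h,R)$ prefactor $\Phi$, one arrives at an exponent $q(\theta,h)$ of order $\theta^2/\bigl(L^2 h^{\alpha}(\theta+H(\log(\Lambda_0/\theta))^{\alpha'/2}h^{\alpha'/2})^2\bigr)$; the two probability terms are made small by the H\"older-regularity theorem and the moment bounds, whose $p$-th-moment growth converts into super-polynomial upper tails for $H$ and for $\sup_K u$.

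\emph{Iteration, calibration, and the main obstacle.} Starting from $u(\kappa,\cdot)\ge\tfrac12 c_0=:\theta_0$ on $B(0,R_0)$ (first application of the above with $g=J_0(\kappa,\cdot)$), chain the one-step lemma over layers of thicknesses $h_j$ with $\sum h_j\ge T$, radii $R_j=R_0-c\sum_{i<j}\sqrt{h_i}\ (\ge R_K)$, and lower bounds $\theta_{j+1}=\lambda_j\theta_j$, stopping once $\theta_N\le\epsilon$; summing the one-step bounds together with the truncation terms $\bbP(H>H_0)+\bbP(\sup_K u>\Lambda_0)$ gives \eqref{E:SmallBall}, \emph{provided} the parameters $N,\{h_j\},\{\lambda_j\},R_0,H_0,\Lambda_0$ are calibrated jointly so that: (i) the $h_j$ decay fast enough that $\sum\sqrt{h_j}=O(1)$, keeping $R_0$—hence $\theta_0$—bounded below in terms of $K$ only; (ii) the localization remains valid along the chain; (iii) each $q(\theta_j,h_j)$ and each truncation exponent is at least a fixed multiple of $|\log\epsilon|^{\alpha}(\log|\log\epsilon|)^{1+\alpha}$; and (iv) the number of layers $N$ and the prefactors $\Phi(h_j,R_j)$ stay sub-dominant. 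This simultaneous calibration—making the layers thin quickly enough to freeze the starting height while the geometric ratios $\lambda_j$ stay close enough to $1$ for $\theta_j$ to descend to $\epsilon$ without any exponent degrading, and keeping track of the logarithmic losses from the cone-balancing and from converting moment growth into tails—is the heart of the proof and the place where the exponent $|\log\epsilon|^{\alpha}$ with its double-logarithmic companion $(\log|\log\epsilon|)^{1+\alpha}$ is pinned down; everything else is standard SPDE bookkeeping.
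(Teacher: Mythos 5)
Your overall strategy (layered propagation of positivity in the spirit of Mueller/Shiga/Chen--Kim, fed by the moment bounds and the H\"older regularity of this paper) is the same family of argument the paper uses: the paper reduces Theorem \ref{T:Pos} to the chain of events $A_k, B_k$ built in the proof of Theorem \ref{T:SComp}, whose one-step input is Lemma \ref{L:induction}. But there is a genuine gap: the quantitative content of the theorem --- the exponent $|\log\epsilon|^{\alpha}\left(\log|\log\epsilon|\right)^{1+\alpha}$ --- is never derived. You state explicitly that the ``simultaneous calibration'' of $N,\{h_j\},\{\lambda_j\},H_0,\Lambda_0$ is ``the heart of the proof and the place where the exponent \dots is pinned down,'' and then you do not carry it out. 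Worse, the one-step exponent you do write down, $q(\theta,h)\asymp \theta^2/\bigl(L^2h^{\alpha}(\theta+H(\log(\Lambda_0/\theta))^{\alpha'/2}h^{\alpha'/2})^2\bigr)$, gives at best $\exp(-c\,h^{-\alpha})$ per layer; with $m\sim|\log\epsilon|$ layers of thickness $h\sim 1/m$ this yields only $\exp(-c\,|\log\epsilon|^{\alpha})$, and the mechanisms you invoke for the logarithmic correction ($K_0\sim\sqrt{\log(\Lambda_0/\theta)}$, tails of the H\"older seminorm and of $\sup_K u$) produce powers of $|\log\epsilon|$ or of $\log\Lambda_0$, not the factor $(\log|\log\epsilon|)^{1+\alpha}$. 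In the paper that factor has a precise and different source: in Lemma \ref{L:induction} the per-layer failure probability is bounded by Chebyshev at optimal order $p$, using Kolmogorov's continuity theorem together with the moment bound \eqref{E:MomAlpha}, which gives an exponent $f(p)=\tfrac{\alpha}{4}p\log\tau+Cp^{(\alpha+1)/\alpha}\tau$ with $\tau=t/m$; minimizing over $p\sim (m\log m)^{\alpha}$ produces $c(m)=\exp(-\Theta m^{\alpha}(\log m)^{1+\alpha})$ as in \eqref{E:CM}, and then $m=|\log\epsilon|$ gives \eqref{E:SmallBall}. Nothing in your outline plays the role of this optimization in $p$, so the claimed rate is asserted rather than proved.

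A secondary concern is the analytic machinery of your one-step lemma. The ``exponential martingale inequality over the layer'' is not directly applicable to the stochastic convolution $\int_s^t\int_{\R^d}G(t-r,x-y)\rho(u(r,y))M(\ud r,\ud y)$, whose kernel depends on the terminal time $t$, and the cone-of-influence localization with stopping at exit from a corridor $[\lambda\theta,\Lambda_0]$ --- the step where the coefficient is improved from $\Lambda_0$ to order $\theta$ --- is exactly the delicate part of Mueller's white-noise argument; for spatially colored noise under \eqref{E:DalangAlpha} you give no argument that it survives, and you would also need quantitative moment bounds on the H\"older seminorm $H$ over $K$ (Theorem \ref{T:Holder} as stated is only qualitative; such bounds are extractable from its proof, but that must be said and used). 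The paper avoids all of this: its one-step estimate needs only $\E\bigl[\sup_S|I|^p\bigr]$ via Kolmogorov and \eqref{E:MomAlpha}, no martingale inequality, no corridor, no localization. So while your variance estimate $\int_0^h\|G(r,\cdot)\|_f^2\,\ud r\lesssim h^{\alpha}$ and the geometric descent $\theta_{j+1}=\lambda_j\theta_j$ are sound ingredients, the proposal as written does not establish \eqref{E:SmallBall}.
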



\bigskip
In order to establish the above results, we need to prove the following four theorems, which are of interest by themselves. 
The first result is a general moment bound. This provides us with a very handy tool in studying various properties of the solution to \eqref{E:SHE}. This result extends the previous work \cite{CK15SHE} from the two-point correlation function to higher moments.
{Let $\LIP_\rho>0$ be the Lipschitz constant for $\rho$.}
See Section \ref{S:Mom} for the proof.

\begin{theorem}[Moment bounds]
\label{T:Mom}
Under Dalang's condition \eqref{E:Dalang},
if the initial data $\mu$ is a signed measure that satisfies \eqref{E:J0finite},
then the solution $u$ to \eqref{E:SHE} for any given $t>0$ and $x\in\R^d$ is in $L^p(\Omega)$, $p\ge 2$, and
{
\begin{align}\label{E:Mom}
 \Norm{u(t,x)}_p \le
 \sqrt{2}\left[\Vip+\sqrt{2} \left(|\mu|*G(t,\cdot)\right)(x) \right]H\left(t;\gamma_p\right)^{1/2},
\end{align}
where $\Vip=|\rho(0)|/\Lip_\rho$ and} $\gamma_p=32p\Lip_\rho^2$ and $H(t;\gamma_p)$ is defined in \eqref{E:H} below.
Moreover, if for some $\alpha\in (0,1]$ condition \eqref{E:DalangAlpha} is satisfied,
then when $p\ge 2$ is large enough, there exists some constant $C>0$ such that 
{
\begin{align}\label{E:MomAlpha}
\Norm{u(t,x)}_p \leq C \Big[\: \Vip+\left(|\mu|*G(t,\cdot)\right)(x) \Big] \exp\left(C p^{1/\alpha} t\right)\,.
\end{align}}
\end{theorem}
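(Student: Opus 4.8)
The plan is to set up a Picard iteration for the mild formulation \eqref{E:mild} and track $L^p(\Omega)$-norms via the Burkholder--Davis--Gundy (BDG) inequality, reducing everything to a deterministic renewal-type inequality that is controlled by the function $H(t;\gamma_p)$. Write $u_0(t,x)=J_0(t,x)$ and $u_{n+1}(t,x)=J_0(t,x)+\int_0^t\int_{\R^d}G(t-s,x-y)\rho(u_n(s,y))M(\ud s,\ud y)$. Using the Lipschitz bound $|\rho(z)|\le \Lip_\rho(\Vip+|z|)$ with $\Vip=|\rho(0)|/\Lip_\rho$, apply BDG to the stochastic integral: there is a universal constant (I will take the standard $z_p\le 2\sqrt{p}$ bound for the BDG constant in the martingale setting, which is where the factor $32p$ in $\gamma_p$ comes from) giving
\begin{align*}
\Norm{u_{n+1}(t,x)-J_0(t,x)}_p^2
&\le 4p\int_0^t\int_{\R^{2d}}G(t-s,x-y)G(t-s,x-y')f(y-y')\\
&\qquad\times\Norm{\rho(u_n(s,y))}_p\Norm{\rho(u_n(s,y'))}_p\,\ud y\,\ud y'\,\ud s.
\end{align*}
Here the key technical device (already used for $p=2$ in \cite{CK15SHE}) is to pull the $L^p$-norms out of the spatial integral using the Cauchy--Schwarz / Minkowski argument, after bounding $\Norm{\rho(u_n(s,y))}_p\le\Lip_\rho(\Vip+\Norm{u_n(s,y)}_p)$ and then replacing $\Norm{u_n(s,y)}_p$ by a bound of the form $(\Vip+(|\mu|*G(s,\cdot))(y))\,\mathcal{M}_n(s)^{1/2}$ for a deterministic function $\mathcal{M}_n$. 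The semigroup property of $G$ (that $(|\mu|*G(s,\cdot))*G(t-s,\cdot)$-type convolutions reassemble into $(|\mu|*G(t,\cdot))$ times a scalar) then lets the spatial dependence $(\Vip+(|\mu|*G(t,\cdot))(x))$ factor out cleanly, leaving a scalar recursion of the form $\mathcal{M}_{n+1}(t)\le 2+2\gamma_p\int_0^t \mathcal{M}_n(s)\,h(t-s)\,\ud s$ with $h$ the two-point kernel $\int_{\R^d}G(r,\cdot)^2*f$, equivalently controlled through $\Upsilon$.

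Next I would solve this renewal inequality. Iterating gives $\mathcal{M}(t)\le 2\sum_{n\ge 0}(2\gamma_p)^n (h^{*n})(t)$, which is exactly (a constant multiple of) the function $H(t;\gamma_p)$ defined in \eqref{E:H}; passing to the limit $n\to\infty$ by monotone convergence, together with the standard fixed-point/uniqueness argument showing $u_n\to u$ in $L^p(\Omega)$ locally uniformly, yields \eqref{E:Mom} with the advertised constant $\sqrt{2}$ and $\gamma_p=32p\Lip_\rho^2$. The convergence of the Picard scheme in $L^p$ is routine given Dalang's condition \eqref{E:Dalang}: the same BDG estimate applied to differences $u_{n+1}-u_n$ produces a contraction after finitely many steps (or on a short time interval, then patched), and finiteness of $\sum_n(2\gamma_p)^n h^{*n}(t)$ for each $t$ follows from \eqref{E:Dalang} via a Laplace-transform (Fourier in space) computation showing $h^{*n}(t)$ decays fast enough.

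For the second bound \eqref{E:MomAlpha} I would extract the asymptotics of $H(t;\gamma_p)$ as $p\to\infty$ under the stronger hypothesis \eqref{E:DalangAlpha}. Taking the Fourier transform in space of the renewal sum, $\sum_n(2\gamma_p)^n h^{*n}$ becomes, at the level of Laplace transforms in $t$, a geometric series $1/(1-2\gamma_p\,\tilde h(\lambda))$ with $\tilde h(\lambda)=(2\pi)^{-d}\int_{\R^d}\hat f(\ud\xi)/(2\lambda+|\xi|^2)$; condition \eqref{E:DalangAlpha} gives the quantitative bound $\tilde h(\lambda)\lesssim \lambda^{-\alpha}$ for large $\lambda$, so the dominant singularity sits at $\lambda\asymp \gamma_p^{1/\alpha}\asymp p^{1/\alpha}$, and inverting the Laplace transform yields $H(t;\gamma_p)^{1/2}\le C\exp(Cp^{1/\alpha}t)$ for $p$ large. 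Combining with \eqref{E:Mom} and absorbing $\Vip$ and the $\sqrt2$'s into $C$ gives \eqref{E:MomAlpha}. The main obstacle is the bookkeeping that keeps the spatial profile $(\Vip+(|\mu|*G(t,\cdot))(x))$ intact through the BDG/Minkowski step for general $p$ (rather than only $p=2$) — i.e., verifying that the $p$-norms can be pulled out of the space integral against $G(t-s,x-y)G(t-s,x-y')f(y-y')$ without picking up extra spatial growth — and, secondarily, obtaining the sharp $\lambda^{-\alpha}$ decay of $\tilde h(\lambda)$ with an explicit constant so that the exponent $p^{1/\alpha}$ is correct.
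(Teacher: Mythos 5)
Your overall route coincides with the paper's: Picard iteration, Burkholder--Davis--Gundy at the level of $L^p(\Omega)$-norms, reduction to a scalar renewal inequality whose solution is $H(t;\gamma_p)$ with $\gamma_p=32p\Lip_\rho^2$, and then an exponential-growth estimate for $H(t;\gamma_p)$ under \eqref{E:DalangAlpha}; your Laplace-transform discussion of the second half is in substance Lemma \ref{L:EstHt} combined with the bound $\Upsilon(\beta)\le C(\beta^{-1}+\beta^{-\alpha})$, which is exactly Step~3 of the paper's proof, and it does yield the rate $p^{1/\alpha}$. The gap is at the step you yourself flag as the ``main obstacle'': the assertion that the semigroup property of $G$ lets the profile $\Vip+\left(|\mu|*G(t,\cdot)\right)(x)$ factor out ``cleanly''. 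Chapman--Kolmogorov does not apply here, because the correlation $f(y_1-y_2)$ couples the two spatial integrals: after inserting the inductive bound $\Norm{u_n(s,y_i)}_p\le C\left(\Vip+(|\mu|*G(s,\cdot))(y_i)\right)\mathcal{M}_n(s)^{1/2}$ you face $\int_0^t\ud s\iint \ud y_1\ud y_2\, G(t-s,x-y_1)G(t-s,x-y_2)f(y_1-y_2)\,(|\mu|*G(s,\cdot))(y_1)\,(|\mu|*G(s,\cdot))(y_2)$, and you cannot convolve $G(t-s,\cdot)$ with $G(s,\cdot)$ separately in each variable while keeping $f(y_1-y_2)$; for measure-valued $\mu$ your scalar recursion with kernel $k$ does not follow from what you wrote.

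The paper closes exactly this step in Lemma \ref{L:IntIneq}, by a different mechanism: the pointwise Gaussian factorization \eqref{E:GGGG} gives $G(t-s,x-y_i)G(s,y_i-z_i)=G(t,x-z_i)\,G\bigl(\tfrac{s(t-s)}{t},\,y_i-z_i-\tfrac{s}{t}(x-z_i)\bigr)$, so the profile $G(t,x-z_i)$ splits off before any $y$-integration; the remaining $\ud y_1\ud y_2$ integral is evaluated by Fourier transform as $(2\pi)^{-d}\int\hat f(\ud\xi)\exp\bigl(i\tfrac{t-s}{t}(z_1-z_2)\cdot\xi-\tfrac{s(t-s)}{t}|\xi|^2\bigr)$ and is bounded uniformly in $x,z_1,z_2$ by discarding the oscillatory factor, which uses the nonnegativity of $\hat f$; finally Lemma \ref{L:shatf} plus the monotonicity of $h_n$ converts the variance $s(t-s)/t$ into $t-s$, so the recursion is precisely the one defining $h_n$ and hence $H(t;\gamma_p)$ in \eqref{E:H}. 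Without this factorization-plus-positive-definiteness argument (or an equivalent substitute) the proof of \eqref{E:Mom} is incomplete as written; with it, your outline matches the paper's Steps 1--3, including the Cauchy-in-$L^p$ argument for the Picard iterates and the absorption of $\Vip$ into the constant for \eqref{E:MomAlpha}.
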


The second result is about the sample-path regularity under condition \eqref{E:DalangAlpha} for rough initial conditions.
This result is used to obtain a large deviation estimates in proving the strong comparison principle.
See Section \ref{S:Holder} for its proof.

\begin{theorem}[H\"older regularity]
\label{T:Holder}
Suppose that $\mu$ is any measure that satisfies \eqref{E:J0finite}
and $f$ satisfies \eqref{E:DalangAlpha} for some $\alpha\in (0,1]$.
Then the solution to \eqref{E:SHE} starting from $\mu$ is a.s.
$\beta_1$-H\"older continuous in time and
$\beta_2$-H\"older continuous in space on $(0,\infty)\times\R^d$ with
\[
\beta_1\in \left(0,\alpha/2\right)\quad
\text{and}\quad
\beta_2\in \left(0,\alpha\right).
\]
\end{theorem}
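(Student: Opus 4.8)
The plan is to estimate, for every $p\ge 2$ and every pair of space-time points, the $L^p(\Omega)$-norm of the increments of $u$, and then to invoke Kolmogorov's continuity criterion. Write $u(t,x)=J_0(t,x)+I(t,x)$, where $I$ denotes the stochastic-integral (Walsh/Dalang) term in \eqref{E:mild}. The deterministic part $J_0$ is the convolution of the heat kernel with a measure satisfying \eqref{E:J0finite}; away from $t=0$ the kernel $G(t,\cdot)$ is smooth, so $J_0$ is $C^\infty$ on $(0,\infty)\times\R^d$ and contributes Hölder (indeed Lipschitz) increments of every order on compact subsets of $(0,\infty)\times\R^d$. Hence the whole difficulty lies in the stochastic term $I$.

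For $I$ I would proceed in two steps, separating the time and space increments. Using the Burkholder–Davis–Gundy inequality for Walsh integrals, one has, for $p\ge 2$,
\begin{align*}
\Norm{I(t,x)-I(s,y)}_p^2
\le c_p \int_0^{\infty}\!\!\int_{\R^d}
\bigl| \widehat{\Delta G}(\tau,\xi)\bigr|^2\,
\Norm{\rho(u)}_{\infty\text{-in-law}}^{?}\, \hat f(\ud\xi)\,\ud\tau,
\end{align*}
where $\Delta G$ is the appropriate difference of heat kernels; more precisely I would bound $\Norm{\rho(u(r,z))}_p$ uniformly for $(r,z)$ in a neighborhood of the compact set using the moment bound \eqref{E:Mom} of Theorem \ref{T:Mom} (this is exactly why that theorem is needed, and why rough initial data causes no extra trouble — the factor $(|\mu|*G(r,\cdot))(z)$ stays bounded on compacts away from $t=0$), then pull the resulting constant out of the stochastic integral. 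What remains is the purely analytic estimate of
\[
\int_0^{t}\!\!\int_{\R^d}\bigl|\mathcal F G(\tau,\cdot)(\xi)-\mathcal F G(\tau-h,\cdot)(\xi)\bigr|^2\,\hat f(\ud\xi)\,\ud\tau
\quad\text{and}\quad
\int_0^{t}\!\!\int_{\R^d}\bigl|\mathcal F G(\tau,\cdot)(\xi)\bigr|^2\,\bigl|1-e^{i\xi\cdot w}\bigr|^2\,\hat f(\ud\xi)\,\ud\tau,
\]
for time increment $h$ and space increment $w$ respectively, where $\mathcal F G(\tau,\cdot)(\xi)=e^{-\tau|\xi|^2/2}$. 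For the spatial increment one uses $|1-e^{i\xi\cdot w}|^2\le (2|\xi\cdot w|)^{2\alpha}\wedge 4 \le 4|w|^{2\alpha}(1+|\xi|^2)^{\alpha}$ together with $\int_0^t e^{-\tau|\xi|^2}\ud\tau\le C(1+|\xi|^2)^{-1}$, which reduces the integral to $C|w|^{2\alpha}\int_{\R^d}(1+|\xi|^2)^{\alpha-1}\hat f(\ud\xi)<\infty$ by \eqref{E:DalangAlpha}; for the time increment a parallel computation, splitting the $\tau$-integral at $\tau=h$ and using $|e^{-\tau|\xi|^2/2}-e^{-(\tau-h)|\xi|^2/2}|\le (h|\xi|^2/2)\wedge 1$ raised to the power $\alpha$, yields a bound $C|h|^{\alpha}\int_{\R^d}(1+|\xi|^2)^{\alpha-1}\hat f(\ud\xi)$. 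Thus $\Norm{I(t,x)-I(s,y)}_p\le C_p\bigl(|t-s|^{\alpha/2}+|x-y|^{\alpha}\bigr)$ for all $p$; letting $p\to\infty$ in Kolmogorov's criterion upgrades this to a.s. Hölder continuity of order $\beta_1<\alpha/2$ in time and $\beta_2<\alpha$ in space on any compact subset of $(0,\infty)\times\R^d$, hence on all of $(0,\infty)\times\R^d$.

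The main obstacle is the kernel estimate, specifically making the splitting of the $\tau$-integral interact correctly with the interpolation exponent $\alpha$ so that one genuinely recovers the spectral condition \eqref{E:DalangAlpha} rather than something stronger; one must be careful that the truncation $(\cdot)\wedge 1$ is used at precisely the right power and that the remaining $\tau$-integral still converges uniformly in $\xi$. A secondary point is bookkeeping near the temporal boundary: since $J_0$ and the moment bounds both blow up as $t\downarrow 0$, all constants must be stated on compact subsets of $(0,\infty)\times\R^d$, and the passage from "Hölder on each compact" to "Hölder on $(0,\infty)\times\R^d$" must be made explicit via a countable exhaustion. I would also remark that the argument only uses $p\ge 2$ moment bounds of polynomial-in-$p$ type, so the refined bound \eqref{E:MomAlpha} is not needed here — \eqref{E:Mom} suffices.
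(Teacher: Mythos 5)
Your outline works for bounded initial data, but it has a genuine gap for the measure-valued data that the theorem actually covers: the step where you bound $\Norm{\rho(u(r,z))}_p$ ``uniformly in a neighborhood of the compact set \dots and pull the resulting constant out of the stochastic integral'' is not available. The stochastic integral $I(t,x)$ integrates over $(s,y)\in(0,t)\times\R^d$, not over a compact set away from the temporal boundary; the moment bound \eqref{E:Mom} gives $\Norm{u(s,y)}_p\le C\bigl(\Vip+(|\mu|*G(s,\cdot))(y)\bigr)H(t;\gamma_p)^{1/2}$, and the factor $(|\mu|*G(s,\cdot))(y)$ blows up as $s\downarrow 0$ (for $\mu=\delta_0$ one even has $\Norm{u(s,0)}_p\ge \E[u(s,0)]=(2\pi s)^{-d/2}\to\infty$) and can also be unbounded in $y$, since \eqref{E:J0finite} admits densities growing like $e^{|x|}$. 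Hence $\sup_{(s,y)\in(0,t)\times\R^d}\Norm{\rho(u(s,y))}_p=\infty$ in general, and the Plancherel-type reduction to $\int_0^t\int_{\R^d}\bigl|\widehat{\Delta G}(\tau,\xi)\bigr|^2\hat f(\ud\xi)\ud\tau$ --- which is exactly the Sanz-Sol\'e--Sarr\`a argument for bounded data --- cannot be run. Your kernel estimates are fine in themselves, but they rest on a premise that fails for rough data, and removing that premise is the whole content of the theorem.

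What the paper does instead is keep the factor $J_0(s,y)=\int_{\R^d}G(s,y-z)\mu(\ud z)$ inside the integrals: after bounding the kernel increments in physical space (Lemma \ref{L:GG-xt}, e.g. $|G(t,x)-G(t,y)|\le Ct^{-\alpha/2}[G(2t,x)+G(2t,y)]\,|x-y|^\alpha$), it uses the factorization \eqref{E:GGGG}, $G(t-s,x-y)G(s,y-z)=G(t,x-z)\,G\bigl(\tfrac{s(t-s)}{t},y-z-\tfrac{s}{t}(x-z)\bigr)$, to absorb the initial-data factor and extract $J_0(2t,x)$-type quantities; only then does it take Fourier transforms in $y_1,y_2$ and apply Lemma \ref{L:shatf}, arriving at increment bounds of the form $C\bigl(|x-x'|^{2\alpha}+|t-t'|^{\alpha}\bigr)\bigl[J_0(2t,x)+J_0(4t',x')\bigr]^2\int_{\R^d}\hat f(\ud\xi)(1+|\xi|^2)^{\alpha-1}$, which are locally (not uniformly) finite and still suffice for Kolmogorov's criterion on compact subsets of $(0,\infty)\times\R^d$. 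To repair your proof you would need to carry out this (or an equivalent) interlacing of the initial data with the kernel increments, rather than discarding it as a constant; the rest of your plan (splitting time and space increments, the $(\cdot\wedge 1)^\alpha$ interpolation, using only \eqref{E:Mom} and not \eqref{E:MomAlpha}, and exhausting $(0,\infty)\times\R^d$ by compacts) is consistent with the paper.
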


\bigskip
The third theorem consists of two approximation results, which are used to establish the weak comparison principle.
The first one says that we can approximate a solution starting from a rough initial data by solutions starting from smooth and bounded initial conditions.
This result allows us to pass from the weak comparison principle for $L^\infty(\R^d)$-valued initial data to that for rough initial data.
In the second approximation, we mollify the noise and establish an uniform $L^2(\Omega)$-limit. See Section \ref{S:Approx} for the proof.

\begin{theorem}[Two approximations]
\label{T:Approx}
{Assume that $f$ satisfies Dalang's condition \eqref{E:Dalang}.}

{\noindent (1)} Suppose that the initial measure $\mu$ satisfies \eqref{E:J0finite}.
If $u(t,x)$ and $u_{\epsilon}(t,x)$ be the solutions to \eqref{E:SHE} starting from
$\mu$ and
$((\mu\:\psi_\epsilon)*G(\epsilon,\cdot))(x)$, respectively, where
\begin{align}\label{E:psi}
 \psi_\epsilon(x) = \one_{\{|x|\le 1/\epsilon\}} + \left(1+1/\epsilon - |x| \right)\one_{\{1/\epsilon<|x|\le 1+1/\epsilon\}},
\end{align}
then
{
\[
\lim_{\epsilon\rightarrow 0_+}\Norm{u(t,x)-u_{\epsilon}(t,x)}_2 =0,\quad \text{for all $t>0$ and $x\in\R^d$.}
\]}
{\noindent (2)} 
Let $\phi$ be any {continuous, nonnegative and nonnegative definite function} on $\R^d$ 
with compact support such that $\int_{\R^d}\phi(x)\ud x=1$. 
Let $u$ be the solution to \eqref{E:SHE} starting from a bounded initial data,
i.e., $\mu(\ud x)=g(x)\ud x$ with $g\in L^\infty(\R^d)$.
If $u^\epsilon$ is the solution to the following mollified equation
\begin{equation}\label{E:SHE regularize}
\frac{\partial}{\partial t} u_\epsilon(t,x) =\frac{1}{2}\Delta u_\epsilon (t,x)+\rho(u_\epsilon (t,x))\dot{M}^{\epsilon}(t,x)\,,
\end{equation}
with the same initial condition $u^\epsilon(0,\cdot)=\mu$ as $u$, 
where 
\begin{equation}
M^{\epsilon}(\ud s, \ud x) = \int_{\RR^d} \phi_{\epsilon}(x-y) M(\ud s,\ud y)\ud x \,, 
\end{equation}
and $\phi_\epsilon(x)=\epsilon^{-d}\phi(x/\epsilon)$,
then
\begin{align}
 \lim_{\epsilon\rightarrow 0_+}\sup_{x\in\R^d}\Norm{u(t,x)-u_{\epsilon}(t,x)}_2 =0,\quad 
 \text{for all $t>0$.}
\end{align}
\end{theorem}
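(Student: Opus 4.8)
The plan, for both parts, is to write $u$ and $u_\epsilon$ via the mild formula \eqref{E:mild}, subtract, and close the resulting integral inequality by a Picard/Gronwall argument, the initial data and the noise mollification entering only through a deterministic source term that is shown to vanish by dominated convergence, with the $p=2$ moment bound of Theorem~\ref{T:Mom} supplying every a priori estimate. The difference between the two parts is that in (2) the initial data is bounded, so all estimates are uniform in $x$ and a scalar Gronwall lemma is enough, whereas in (1) the measure-valued data forces one to keep the bilinear (Fourier-side) estimates already used to prove Theorem~\ref{T:Mom}.

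\textbf{Part (1).} Since $\mu\,\psi_\epsilon$ is a compactly supported finite signed measure, $\mu_\epsilon:=(\mu\,\psi_\epsilon)*G(\epsilon,\cdot)$ is bounded and smooth, and $u_\epsilon$ solves \eqref{E:mild} with the \emph{same} noise $M$ and with $J_0$ replaced by $J_0^\epsilon(t,x)=\big(\mu_\epsilon*G(t,\cdot)\big)(x)=\big((\mu\,\psi_\epsilon)*G(t+\epsilon,\cdot)\big)(x)$. First I would prove the deterministic convergence $J_0^\epsilon(t,x)\to J_0(t,x)$ for each fixed $t>0$ and $x\in\R^d$: one has $|\mu\,\psi_\epsilon|=|\mu|\,\psi_\epsilon\le|\mu|$, $\mu\,\psi_\epsilon\to\mu$ weakly, and for $\epsilon\le1$ the Gaussian $G(t+\epsilon,x-\cdot)$ is dominated by a fixed Gaussian that $|\mu|$ integrates by \eqref{E:J0finite}, so dominated convergence applies; in particular $h_\epsilon(t,x):=\big(J_0(t,x)-J_0^\epsilon(t,x)\big)^2\to0$. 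Subtracting the two mild formulas, applying It\^{o}'s isometry, the Lipschitz bound for $\rho$ and the Cauchy--Schwarz inequality, one gets, with $\mathfrak D_\epsilon(t,x):=\Norm{u(t,x)-u_\epsilon(t,x)}_2^2$,
\[
\mathfrak D_\epsilon(t,x)\le 2h_\epsilon(t,x)+2\Lip_\rho^2\int_0^t\!\ud s\int_{\R^{2d}}\!G(t-s,x-y)\,G(t-s,x-y')\,f(y-y')\sqrt{\mathfrak D_\epsilon(s,y)\,\mathfrak D_\epsilon(s,y')}\;\ud y\,\ud y'.
\]
Theorem~\ref{T:Mom}, applied to $u$ and to $u_\epsilon$ (using $|\mu_\epsilon|*G(t,\cdot)\le|\mu|*G(t+\epsilon,\cdot)$), gives the a priori bound $\mathfrak D_\epsilon(s,y)\le C\big[\Vip^2+(|\mu|*G(s,\cdot))(y)^2+(|\mu|*G(s+\epsilon,\cdot))(y)^2\big]\,H(s;\gamma_2)$ with $C$ absolute, which for each fixed $\epsilon$ is finite and has exactly the growth for which the Picard iteration behind Theorem~\ref{T:Mom} converges. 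I would then iterate the displayed inequality, estimating the bilinear spatial integral by Plancherel so that the factor $G(t-s,x-\cdot)$ contributes the frequency decay $e^{-(t-s)|\xi|^2}$ exactly as in the proof of Theorem~\ref{T:Mom}; this yields $\mathfrak D_\epsilon(t,x)\le\mathcal S[h_\epsilon](t,x)$ for an $\epsilon$-independent, monotone resolvent functional $\mathcal S$, the remainder after $N$ iterations tending to $0$ because $\Upsilon(\lambda)<\infty$ for every $\lambda>0$. A last application of dominated convergence inside $\mathcal S$ — using $h_\epsilon\to0$ pointwise and the domination furnished by Theorem~\ref{T:Mom}, which is uniform for $\epsilon$ in a right-neighbourhood of $0$ — gives $\mathfrak D_\epsilon(t,x)\to0$.

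\textbf{Part (2).} Writing $\hat\phi_\epsilon(\xi)=\hat\phi(\epsilon\xi)$, the hypotheses $\phi\ge0$, $\int\phi=1$ give $0\le\hat\phi(\epsilon\xi)\le1$, so $M^\epsilon$ has spectral measure $\hat\phi(\epsilon\xi)^2\,\hat f(\ud\xi)\le\hat f(\ud\xi)$; hence Dalang's condition, and with it the moment bound \eqref{E:Mom}, holds for $u_\epsilon$ with constants uniform in $\epsilon$. Likewise $M-M^\epsilon$ has spectral measure $\big(1-\hat\phi(\epsilon\xi)\big)^2\hat f(\ud\xi)\le4\,\hat f(\ud\xi)$. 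Since $u$ and $u_\epsilon$ share the same bounded initial data $g$ — hence the same $J_0$, with $\Norm{J_0}_\infty\le\Norm{g}_\infty$ — subtracting mild formulas gives $u-u_\epsilon=I_1+I_2$ with $I_1(t,x)=\int_0^t\!\int G(t-s,x-y)\big(\rho(u(s,y))-\rho(u_\epsilon(s,y))\big)M(\ud s,\ud y)$ and $I_2(t,x)=\int_0^t\!\int G(t-s,x-y)\rho(u_\epsilon(s,y))\,(M-M^\epsilon)(\ud s,\ud y)$. For $I_1$, the Lipschitz bound, Cauchy--Schwarz and the nonnegativity of $f$ give
\[
\Norm{I_1(t,x)}_2^2\le\Lip_\rho^2\int_0^t k(t-s)\,D_\epsilon(s)\,\ud s,\qquad D_\epsilon(s):=\sup_{y\in\R^d}\Norm{u(s,y)-u_\epsilon(s,y)}_2^2,
\]
where $k(r):=(2\pi)^{-d}\int e^{-r|\xi|^2}\hat f(\ud\xi)$ satisfies $\int_0^t k(r)\,\ud r<\infty$ by \eqref{E:Dalang}, and this bound is independent of $x$. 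For $I_2$, It\^{o}'s isometry gives
\[
\Norm{I_2(t,x)}_2^2=\int_0^t\!\ud s\,(2\pi)^{-d}\!\int_{\R^d}\E\Big[\big|\mathcal F\big[G(t-s,x-\cdot)\rho(u_\epsilon(s,\cdot))\big](\xi)\big|^2\Big]\big(1-\hat\phi(\epsilon\xi)\big)^2\hat f(\ud\xi),
\]
and since the measure $\E[|\mathcal F[\,\cdot\,](\xi)|^2]\,\hat f(\ud\xi)\,\ud s$ has total mass $(2\pi)^d\Norm{u_\epsilon(t,x)-J_0(t,x)}_2^2$, bounded uniformly in $x$ and $\epsilon$ by \eqref{E:Mom} and $g\in L^\infty$, while $(1-\hat\phi(\epsilon\xi))^2\to0$ pointwise and stays $\le4$, a dominated-convergence argument — uniform in $x$ after splitting the frequency integral at $|\xi|=R$ and using the local finiteness of $\hat f$ together with the uniform convergence $\hat\phi(\epsilon\cdot)\to1$ on $\{|\xi|\le R\}$ — shows $\eta_\epsilon(t):=\sup_{s\le t}\sup_{x}\Norm{I_2(s,x)}_2^2\to0$ as $\epsilon\to0_+$. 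Combining, $D_\epsilon(t)\le2\eta_\epsilon(t)+2\Lip_\rho^2\int_0^t k(t-s)D_\epsilon(s)\,\ud s$, with $D_\epsilon(t)<\infty$ by \eqref{E:Mom} and $\eta_\epsilon$ nondecreasing; Gronwall's lemma for $L^1$-convolution kernels then yields $D_\epsilon(t)\le C(t)\,\eta_\epsilon(t)\to0$.

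\textbf{The main obstacle.} Part (2) is a routine Gronwall argument once the two spectral-measure comparisons and the dominated convergence for $I_2$ are in place. The genuinely delicate point is in Part (1): for measure-valued initial data one cannot apply $\sqrt{ab}\le(a+b)/2$ and collapse the double spatial integral into a scalar convolution inequality, because the resulting kernel — whose action on $(|\mu|*G(s,\cdot))^2$ is $\int_{\R^d}G(t-s,x-y)\big(G(t-s,\cdot)*f\big)(x-y)\,(|\mu|*G(s,\cdot))(y)^2\,\ud y$ — is not integrable in $s$ near $0$ as soon as $d\ge2$ (already for $\mu=\delta_{y_0}$). One must therefore propagate the full bilinear/Fourier estimates of Theorem~\ref{T:Mom} through the whole iteration, and, in the concluding dominated-convergence step, arrange the domination so that it is uniform in $\epsilon$ near the temporal origin, where $J_0^\epsilon$ blows up whenever $\mu$ has atoms.
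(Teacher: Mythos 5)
Your overall architecture matches the paper's — subtract the mild forms, close by a Picard/resolvent argument in part (1) and by a scalar Gronwall argument in part (2) — but in both parts the step you leave as an assertion is exactly the step where the real work lies, and as written each one has a genuine gap.

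In part (1), you claim that iterating your displayed inequality ``exactly as in the proof of Theorem~\ref{T:Mom}'' yields $\mathfrak{D}_\epsilon(t,x)\le\mathcal{S}[h_\epsilon](t,x)$ for an $\epsilon$-independent monotone resolvent $\mathcal{S}$, after which dominated convergence finishes. The induction behind Theorem~\ref{T:Mom} (Lemma~\ref{L:IntIneq}) closes only because at every step the forcing is the square of the heat semigroup acting on a \emph{nonnegative} measure, so that \eqref{E:GGGG} and the Fourier bound apply; to run that induction on your recursion you must first replace $\sqrt{h_\epsilon(s,y)}=\left|\left(\nu_\epsilon*G(s,\cdot)\right)(y)\right|$, with $\nu_\epsilon:=\mu-(\mu\psi_\epsilon)*G(\epsilon,\cdot)$, by $\left(|\nu_\epsilon|*G(s,\cdot)\right)(y)$, and the resulting closed-form bound is $\mathfrak{D}_\epsilon(t,x)\le C\left[\left(|\nu_\epsilon|*G(t,\cdot)\right)(x)\right]^2H(t;\gamma)$, which does \emph{not} vanish when $\mu$ has atoms: for $\mu=\delta_0$ one has $|\nu_\epsilon|=\delta_0+G(\epsilon,x)\ud x$, so $\left(|\nu_\epsilon|*G(t,\cdot)\right)(x)\to 2G(t,x)$. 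What is needed is a resolvent bound that keeps the \emph{signed} quantity $J_0-J_0^\epsilon$ inside the space--time integral so its cancellation survives until the final dominated-convergence step; your one-point recursion in $\sqrt{\mathfrak{D}_\epsilon(s,y)\mathfrak{D}_\epsilon(s,y')}$ does not linearize into such an identity. This is precisely what the paper supplies by passing to the two-point correlation of $v_\epsilon=u_\epsilon-u$, whose recursion has forcing $|J_\epsilon(t,x)J_\epsilon(t,x')|$ with $J_\epsilon=\nu_\epsilon*G$, and applying Lemma~\ref{L:TwoPoint}: the conclusion \eqref{E:TwoPoint2} leaves $|J_\epsilon(s,y_1)J_\epsilon(s,y_2)|$ under the integral, and dominated convergence then runs with the uniform-in-$\epsilon$ domination $|J_\epsilon(s,y)|\le C\left[(|\mu|*G(2s,\cdot))(y)+(|\mu|*G(s,\cdot))(y)\right]$. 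Your ``main obstacle'' paragraph names this difficulty, but the proposal does not actually execute the mechanism that resolves it.

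In part (2), your decomposition into $I_1$ and $I_2$ and the Gronwall closure coincide with the paper's; the difference is that you treat $I_2$ on the Fourier side, whereas the paper bounds $|\E[\rho(u_\epsilon(s,y))\rho(u_\epsilon(s,y'))]|$ by a constant and reduces $I_2$ to $\int_0^t\int_{\R^d}G(2(t-s),y)\,|f-2f^{\epsilon}+f^{\epsilon,\epsilon}|(y)\,\ud y\,\ud s$, which tends to zero by a deterministic approximation-of-identity result in the Gaussian-weighted space $L^1(g(2t,|\cdot|)\ud y)$ (Lemma~\ref{L: h appro}). Your Fourier-side dominated convergence has a gap at high frequencies: after splitting at $|\xi|=R$, the low-frequency piece is indeed killed by the uniform convergence $\hat\phi(\epsilon\cdot)\to1$ on compacts together with the uniformly bounded total mass, but the tail piece requires
\begin{equation*}
\sup_{\epsilon,\,x}\int_0^t\ud s\int_{|\xi|>R}\E\!\left[\left|\calF\!\left[G(t-s,x-\cdot)\rho(u_\epsilon(s,\cdot))\right](\xi)\right|^2\right]\hat f(\ud\xi)\;\longrightarrow\;0\qquad(R\to\infty),
\end{equation*}
an equi-integrability statement that does not follow from uniform boundedness of the total mass (which, incidentally, is the second moment of the integral of $\rho(u_\epsilon)$ against $M$, not $\Norm{u_\epsilon-J_0}_2^2$). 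Under Dalang's condition alone there is no quantitative spatial modulus for $u_\epsilon$ uniform in $\epsilon$, the spectral energy density is merely bounded in $\xi$, and $\int_{\R^d}\hat f(\ud\xi)$ may be infinite, so no integrable $\epsilon$-independent majorant is available; the physical-space route of the paper (or some genuine substitute for this uniform high-frequency control) is needed to complete the argument.
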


\begin{remark}\label{R:Example}
One can always find one example of such function $\phi$ in part (2) of Theorem \ref{T:Approx},
e.g.,
$\phi(x)=\prod_{i=1}^d \left(1-|x_i|\right)\one_{\{|x_i|\le 1\}}$ 
whose Fourier transform is nonnegative: $\hat{\phi}(\xi)=2^{d}\prod_{j=1}^d \xi_j^{-2}(1-\cos(\xi_j))\ge 0$.
\end{remark}

\bigskip
The last result shows that the solution $u(t,x)$ to \eqref{E:SHE} converges to its initial data $\mu$ weakly as
$t\rightarrow 0$.
This result is used to establish the strong comparison principle for measure-valued initial data
given that for function-valued initial data.
See Section \ref{S:WeakSol} for the proof.
Let $C_c(\R^d)$ be the set of continuous functions with compact support.

\begin{theorem}\label{T:WeakSol}
{Under Dalang's condition \eqref{E:Dalang}, 
if $u(t,x)$  be the solution to \eqref{E:SHE} starting from a Borel measure $\mu$ that satisfies \eqref{E:J0finite},
then }
\begin{align}
\lim_{t\rightarrow 0} \int_{\R^d} u(t,x) \phi(x)\ud x = \int_{\R^d} \phi(x)\mu(\ud x)
\qquad\text{in $L^2(\Omega)$ for all $\phi\in C_c(\R^d)$,}
\end{align}
where the probability space is introduced in Section \ref{S:Notation}.
\end{theorem}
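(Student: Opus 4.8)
The plan is to decompose the tested solution according to the mild formulation \eqref{E:mild} as
\[
\int_{\R^d} u(t,x)\,\phi(x)\,\ud x \;=\; \int_{\R^d} J_0(t,x)\,\phi(x)\,\ud x \;+\; N(t),\qquad N(t):=\int_{\R^d}\phi(x)\Bigl(\int_0^t\!\int_{\R^d} G(t-s,x-y)\,\rho(u(s,y))\,M(\ud s,\ud y)\Bigr)\ud x ,
\]
and to prove separately that the first (deterministic) term converges to $\int_{\R^d}\phi\,\ud\mu$ and that $N(t)\to 0$ in $L^2(\Omega)$ as $t\downarrow 0$. One may assume $0<t\le 1$; write $K:=\spt{\phi}\subseteq B(0,R)$. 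A recurring ingredient is the elementary bound, valid for every $r\le 1$,
\[
\bigl|(\phi * G(r,\cdot))(y)\bigr|\;\le\;\Theta(y):=\Norm{\phi}_\infty\,\one_{\{|y|\le 2R\}}+C(\phi)\,e^{-|y|^2/16}\,\one_{\{|y|>2R\}},
\]
which follows because $|y-a|\ge|y|/2$ for $a\in K$ and $|y|>2R$, together with $\sup_{r>0}r^{-d/2}e^{-R^2/(4r)}<\infty$; the same bound holds with $|\phi|$ in place of $\phi$. By \eqref{E:J0finite}, $\Theta$ is integrable against any Borel measure that satisfies \eqref{E:J0finite}. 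The deterministic term is then easy: Fubini (legitimate since $|\mu|*G(t,\cdot)$ is finite and continuous on the compact $K$) gives $\int_{\R^d}J_0(t,x)\phi(x)\,\ud x=\int_{\R^d}(\phi * G(t,\cdot))(y)\,\mu(\ud y)$, and since $\phi\in C_c(\R^d)$ we have $(\phi*G(t,\cdot))(y)\to\phi(y)$ pointwise with $|(\phi*G(t,\cdot))|\le\Theta\in L^1(|\mu|)$, so dominated convergence yields the claim.

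For $N(t)$ I would first invoke a stochastic Fubini theorem to move $\int_{\R^d}\phi(x)\,\ud x$ inside the stochastic integral,
\[
N(t)=\int_0^t\!\int_{\R^d}(\phi*G(t-s,\cdot))(y)\,\rho(u(s,y))\,M(\ud s,\ud y),
\]
the requisite integrability being a byproduct of the estimate below. Applying the isometry of the stochastic integral, the pointwise bound $|\phi*G(t-s,\cdot)|\le|\phi|*G(t-s,\cdot)$, Cauchy--Schwarz on the expectation, and the $p=2$ case of \eqref{E:Mom} (which, via $\Norm{\rho(u)}_2\le\Lip_\rho(\Vip+\Norm{u}_2)$, gives $\Norm{\rho(u(s,y))}_2\le C_1\,(\nu*G(s,\cdot))(y)$ uniformly for $0<s\le t$, where $\nu:=\Vip\,\ud x+|\mu|$ is a nonnegative measure still satisfying \eqref{E:J0finite} and $\Vip+(|\mu|*G(s,\cdot))(y)=(\nu*G(s,\cdot))(y)$), one reduces to
\[
\E\bigl[N(t)^2\bigr]\;\le\;C\int_0^t\ud s\int_{\R^d}\!\int_{\R^d} g_s(y)\,g_s(z)\,f(y-z)\,\ud y\,\ud z,\qquad g_s:=(|\phi|*G(t-s,\cdot))\cdot(\nu*G(s,\cdot))\ge 0 .
\]

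The key point is a Fourier bound for $g_s$. Completing the square in the product of the two heat kernels gives $G(t-s,y-a)\,G(s,y-w)=G(t,a-w)\,G\bigl(\tfrac{s(t-s)}{t},\,y-y_*(a,w)\bigr)$ with $y_*$ independent of $y$, whence $g_s\in L^1(\R^d)$ and
\[
\bigl|\widehat{g_s}(\xi)\bigr|\;\le\;e^{-\frac{s(t-s)}{2t}|\xi|^2}\int_{\R^d}\!\int_{\R^d}|\phi|(a)\,G(t,a-w)\,\ud a\,\nu(\ud w)\;=\;e^{-\frac{s(t-s)}{2t}|\xi|^2}\,A_t,
\]
with $A_t=\int_{\R^d}(|\phi|*G(t,\cdot))(w)\,\nu(\ud w)\le\int_{\R^d}\Theta\,\ud\nu=:A_*<\infty$, uniformly for $t\le 1$. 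Using $\int_{\R^d}\int_{\R^d}g(y)g(z)f(y-z)\,\ud y\,\ud z=(2\pi)^{-d}\int_{\R^d}|\widehat g(\xi)|^2\,\hat f(\ud\xi)$ and the substitution $s=tu$,
\[
\E\bigl[N(t)^2\bigr]\;\le\;\frac{C\,A_*^2}{(2\pi)^d}\int_{\R^d}\Bigl(t\int_0^1 e^{-t\,u(1-u)|\xi|^2}\,\ud u\Bigr)\hat f(\ud\xi) .
\]
For each $\xi$ the inner quantity is $\le t\to0$, and it is also dominated by $C/(1+|\xi|^2)$ uniformly in $t\le 1$, which is $\hat f$-integrable by Dalang's condition \eqref{E:Dalang}; dominated convergence then gives $\E[N(t)^2]\to0$, completing the proof.

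The main obstacle is the stochastic term in the rough-data regime: since $\mu$ may be infinite and may carry atoms, $(|\mu|*G(t,\cdot))(x)$ can blow up as $t\downarrow0$, so $\Norm{u(t,x)}_2$ is not controlled uniformly in $x$ and the naive bound $N(t)\le\Norm{\phi}_{L^1}\sup_x\Norm{u(t,x)-J_0(t,x)}_2$ fails; the averaging against $\phi$, hence the stochastic Fubini, is indispensable. Furthermore none of $\phi*G(t-s,\cdot)$, $\nu*G(s,\cdot)$, $g_s$ is compactly supported or a priori in $L^1$, so the Gaussian tail control coming from \eqref{E:J0finite} is needed both to make the spatial double integrals finite and to pass the limit under the $\hat f$-integral; checking the exact hypotheses of the stochastic Fubini theorem is an additional but routine matter, supplied by the same estimates.
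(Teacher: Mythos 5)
Your proposal is correct and follows essentially the same route as the paper: stochastic Fubini, the Walsh isometry combined with the moment bound \eqref{E:Mom}, the Gaussian factorization \eqref{E:GGGG} together with a spectral-measure (Fourier) computation, and dominated convergence under Dalang's condition \eqref{E:Dalang}. The only cosmetic differences are that you spell out the easy deterministic term (which the paper treats as immediate) and that you control $\phi$ via the Gaussian-tail majorant $\Theta$ and the measure $\nu=\Vip\,\ud x+|\mu|$, where the paper instead uses $|\phi(x)|\le C\,G(1,x)$ and the augmented measure $\mu_*=\mu+\ud x$.
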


\bigskip
This paper is organized as follows:
After some preliminaries in Section \ref{S:Notation},
we first prove the moment bounds, Theorem \ref{T:Mom}, in Section \ref{S:Mom}.
Using these moment bounds, we proceed to establish the H\"older regularity,
Theorem \ref{T:Holder}, in Section \ref{S:Holder}.
Then in Section \ref{S:Approx} we prove Theorem \ref{T:Approx} for the two approximations.
The weak limit as $t$ goes to zero, i.e., Theorem \ref{T:WeakSol}, is proved in Section \ref{S:WeakSol}. With these preparation, we prove the weak comparison principle, Theorem \ref{T:WComp}, in Section \ref{S:WComp}. Finally, in Section \ref{S:SComp} we prove both the strong comparison principle (Theorem \ref{T:SComp}) and the strict positivity (Theorem \ref{T:Pos}). Some technical lemmas are given in Appendix. Throughout this paper, $C$ will denote a generic constant which may vary {at each occurrence}.

\section{Some preliminaries}
\label{S:Notation}

\subsection{Definition and existence of a solution}
Recall that a {\em spatially homogeneous Gaussian noise that is white in time} is an
$L^2(\Omega)$-valued mean zero Gaussian process on a complete probability space $\left(\Omega,\calF,\bbP\right)$
\[
\left\{F(\psi):\: \psi\in C_c^{\infty}\left([0,\infty)\times\R^{d}\right)\:\right\},
\]
such that
\[
\E\left[F(\psi)F(\phi)\right] = \int_0^{\infty} \ud s\iint_{\R^{2d}}\psi(s,x)\phi(s,y)f(x-y)\ud x\ud y.
\]
Let $\calB_b(\R^d)$ be the collection of Borel measurable sets with finite Lebesgue measure.
As in Dalang-Walsh theory \cite{Dalang99,Walsh},
one can extend $F$ to a $\sigma$-finite $L^2(\Omega)$-valued martingale measure $B\mapsto F(B)$
defined for $B\in \calB_b(\R_+\times\R^d)$. Then define
\[
M_t(B) :=F\left([0,t]\times B \right), \quad B\in\calB_b(\R^d).
\]
Let $(\calF_t,t\ge 0)$ be the natural filtration generated by $M_\cdot(\cdot)$
and augmented by all $\bbP$-null sets $\calN$ in $\calF$, i.e.,
\[
\calF_t := \sigma\left(M_s(A):\: 0\le s\le t,
A\in\calB_b\left(\R^d\right)\right)\vee
\calN,\quad t\ge 0,
\]
Then for any adapted, jointly measurable (with respect to
$\calB\left((0,\infty)\times\R^d\right)\times\calF$) random field $\{X(t,x): t>0,x\in\R^d\}$ such that
\[
\int_0^\infty\ud s\iint_{\R^{2d}}\ud x\ud y\:
\Norm{X(s,y)X(s,x)}_{\frac{p}{2}} f(x-y) <\infty,
\]
the stochastic integral
\[
\int_0^\infty \int_{\R^d} X(s,y)M(\ud s,\ud y)
\]
is well-defined in the sense of Dalang-Walsh. Here we only require the joint-measurability instead of
predictability; see Proposition 2.2 in \cite{CK15SHE} for this case or
Proposition 3.1 in \cite{ChenDalang13Heat} for the space-time white noise case.
Throughout this paper, $\Norm{\cdot}_p$ denotes the $L^p(\Omega)$-norm.

\bigskip


We formally write the SPDE \eqref{E:SHE} in the integral form
\begin{align}\label{E:WalshSI}
u(t,x)= J_0(t,x)+ I(t,x)
\end{align}
where
\[
I(t,x):= \iint_{[0,t]\times \R^d} G(t-s,x-y) \rho(u(s,y))M(\ud s,\ud y).
\]
The above stochastic integral is understood in the sense of Walsh \cite{Dalang99,Walsh}.

\begin{definition}\label{D:Solution}
A process $u=\left(u(t,x),\:(t,x)\in(0,\infty)\times\R^d \right)$  is called a {\it
random field solution} to \eqref{E:SHE} if
\begin{enumerate}[(1)]
 \item $u$ is adapted, i.e., for all $(t,x)\in(0,\infty)\times\R^d$, $u(t,x)$ is
$\calF_t$-measurable;
\item $u$ is jointly measurable with respect to
$\calB\left((0,\infty)\times\R^d\right)\times\calF$;
\item $\Norm{I(t,x)}_2<+\infty$ for all $(t,x)\in(0,\infty)\times\R^d$;
\item  $I$ is $L^2(\Omega)$-continuous, i.e., the function $(t,x)\mapsto I(t,x)$ mapping $(0,\infty)\times\R^d$ into
$L^2(\Omega)$ is continuous;
\item $u$ satisfies \eqref{E:WalshSI} a.s.,
for all $(t,x)\in(0,\infty)\times\R^d$.
\end{enumerate}
\end{definition}

\begin{theorem}[Theorem 2.4 in \cite{CK15SHE}]
\label{T:ExUni}
If the initial data $\mu$ satisfies \eqref{E:J0finite},
then under Dalang's condition \eqref{E:Dalang},
SPDE \eqref{E:SHE} has a unique (in the sense of versions) random field solution
$\left\{u(t,x): t>0,x \in\R^d\right\}$ starting from $\mu$.
This solution is $L^2(\Omega)$-continuous.
\end{theorem}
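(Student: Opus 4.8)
\textbf{Proof proposal for Theorem \ref{T:ExUni}.}

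The plan is to run a standard Picard iteration in $L^2(\Omega)$, uniformly on compact time intervals and uniformly in the spatial variable, exploiting the Gaussian decay in \eqref{E:J0finite} to control the inhomogeneous term $J_0$ and Dalang's condition \eqref{E:Dalang} to control the stochastic convolution. First I would record the basic estimate on $J_0$: since $G(t,\cdot)$ is a Gaussian with variance $t$, the hypothesis \eqref{E:J0finite} gives $J_0(t,x)=(|\mu|*G(t,\cdot))(x)<\infty$ for every $t>0,x\in\R^d$, and one has the semigroup-type bound $(|\mu|*G(t,\cdot))(x)\le C(T,x_0)$ for $(t,x)$ in any compact set, with a more careful (Gaussian-weighted) bound needed to handle the noncompact spatial directions. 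The key analytic input is the second-moment estimate for the stochastic convolution: for an adapted jointly measurable $Z$,
\[
\Norm{\iint_{[0,t]\times\R^d}G(t-s,x-y)Z(s,y)M(\ud s,\ud y)}_2^2
=\int_0^t\ud s\int_{\R^d}\frac{\hat f(\ud\xi)}{(2\pi)^d}\,\bigl|\,\widehat{G(t-s,\cdot)Z(s,\cdot)}(\xi)\bigr|^2\,,
\]
which, combined with the Fourier-side bound $|\calF G(r,\cdot)(\xi)|=e^{-r|\xi|^2/2}$ and Dalang's condition, yields a Gronwall-closable inequality of the form $\sup_y\Norm{(G\star Z)(t,y)}_2^2\le C\int_0^t\sup_y\Norm{Z(s,y)}_2^2\,h(t-s)\,\ud s$ with $h$ locally integrable. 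All of this is exactly the machinery of \cite{Dalang99,CK15SHE}, so I would cite it.

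Second, I would set up the Picard scheme: $u_0(t,x):=J_0(t,x)$, and $u_{n+1}(t,x):=J_0(t,x)+I(u_n)(t,x)$ where $I(v)(t,x)=\iint_{[0,t]\times\R^d}G(t-s,x-y)\rho(v(s,y))M(\ud s,\ud y)$. Using the Lipschitz bound $|\rho(v)|\le \Vip\,\Lip_\rho+\Lip_\rho|v|$ (so $\Norm{\rho(v(s,y))}_2\le \Lip_\rho(\Vip+\Norm{v(s,y)}_2)$), the moment estimate above, and an induction, I would show each $u_n(t,x)$ is in $L^2(\Omega)$ with a bound uniform on $[0,T]\times\R^d$ after multiplying by the Gaussian weight that makes $\sup_x e^{-a|x|^2}(|\mu|*G(t,\cdot))(x)$ finite; adaptedness and joint measurability propagate through the iteration (joint measurability of $I(u_n)$ follows because $G\cdot\rho(u_n)$ is jointly measurable and one uses the Walsh-integral construction, cf. Proposition 2.2 of \cite{CK15SHE}). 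Then the difference $D_n(t,x):=\Norm{u_{n+1}(t,x)-u_n(t,x)}_2^2$ satisfies $\sup_x D_{n+1}(t,x)\le C\Lip_\rho^2\int_0^t \sup_x D_n(s,x)\,h(t-s)\,\ud s$; iterating this convolution inequality with a locally integrable kernel gives summability of $\sup_{t\le T}\sup_x D_n(t,x)^{1/2}$ (a generalized Gronwall / Picard argument, as in \cite[Lemma]{Dalang99} or \cite{CK15SHE}). Hence $u_n$ converges in $L^2(\Omega)$, uniformly on $[0,T]\times\R^d$, to a limit $u$; passing to the limit in the Picard recursion (the stochastic-integral term converges because $\Norm{I(u_n)-I(u)}_2^2\le C\Lip_\rho^2\int_0^t\sup_y\Norm{u_n(s,y)-u(s,y)}_2^2 h(t-s)\ud s\to 0$) shows $u$ satisfies \eqref{E:WalshSI} a.s. for each $(t,x)$, verifying (5) of Definition \ref{D:Solution}; items (1)--(3) are inherited from the $u_n$, and joint measurability can be secured by choosing a jointly measurable version.

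Third, for $L^2(\Omega)$-continuity of $I$ (item (4)): I would estimate $\Norm{I(t,x)-I(t',x')}_2^2$ by splitting into the contribution from $0$ to $t\wedge t'$, where the integrand difference is $G(t-s,x-\cdot)-G(t'-s,x'-\cdot)$ applied to $\rho(u(s,\cdot))$ and controlled on the Fourier side by $|e^{-(t-s)|\xi|^2/2}e^{-i\xi\cdot x}-e^{-(t'-s)|\xi|^2/2}e^{-i\xi\cdot x'}|$, which tends to $0$ pointwise in $\xi$ and is dominated so that dominated convergence against $\hat f(\ud\xi)/(1+|\xi|^2)$ applies, plus the short "extra strip" contribution from $t\wedge t'$ to $t\vee t'$ which is $O(|t-t'|)$-small by Dalang's condition and the uniform moment bound on $u$; this is the continuity argument of \cite[Theorem 2.4]{CK15SHE}, which I would invoke. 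Finally, uniqueness (in the sense of versions): if $u,\tilde u$ are two random field solutions, then $v(t,x):=\Norm{u(t,x)-\tilde u(t,x)}_2^2$ satisfies $\sup_x v(t,x)\le C\Lip_\rho^2\int_0^t\sup_x v(s,x)\,h(t-s)\,\ud s$ on each $[0,T]$, and since $\sup_{t\le T}\sup_x v(t,x)<\infty$ by (3) together with the uniform-in-$x$ a priori bound, the generalized Gronwall lemma forces $v\equiv 0$, whence $u(t,x)=\tilde u(t,x)$ a.s. for each $(t,x)$.

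I expect the main obstacle to be the \emph{uniformity in $x\in\R^d$}: because the initial measure $\mu$ need not be finite and $J_0$ is only Gaussian-weighted bounded, one cannot simply bound $\sup_x\Norm{u_n(t,x)}_2$ but must carry the weight $\Theta_a(x):=\sup_t e^{-a|x|^2}J_0(t,x)$ (or work on an increasing family of balls) through all the Gronwall estimates, checking that the convolution with $G$ interacts correctly with this weight; this is a slightly delicate bookkeeping point but is exactly what is handled in \cite{CK15SHE}, so in practice the theorem is quoted from there and the proof above is the roadmap behind that citation.
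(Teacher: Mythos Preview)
The paper does \emph{not} prove Theorem~\ref{T:ExUni}; it is simply quoted from \cite{CK15SHE}. So there is no ``paper's own proof'' to compare against directly. Your proposal is a reasonable outline of a Picard-iteration argument, and you correctly identify the central difficulty: with measure-valued initial data $J_0(t,\cdot)$ is unbounded in $x$, so one cannot close a Gronwall inequality in $\sup_x\Norm{u_n(t,x)}_2$.

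However, your suggested fix --- carrying a Gaussian weight $e^{-a|x|^2}$ through the iteration and hoping it interacts well with convolution by $G$ --- is not the route taken in \cite{CK15SHE}, and the paper under review makes that visible in its own machinery (Lemma~\ref{L:TwoPoint}, Lemma~\ref{L:IntIneq}, and the proof of Theorem~\ref{T:Mom}). The actual mechanism is \emph{pointwise}, not sup-norm: one works with the two-point function $g(t,x,x')=\E[u(t,x)u(t,x')]$ (or $g(t,x)=\Norm{u(t,x)}_p$), and uses the semigroup factorization \eqref{E:GGGG},
\[
G(t-s,x-y)G(s,y-z)=G(t,x-z)\,G\!\left(\tfrac{s(t-s)}{t},\,y-z-\tfrac{s}{t}(x-z)\right),
\]
to pull the factor $(|\mu|*G(t,\cdot))(x)$ outside the recursion at every step. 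The remaining integral is handled on the Fourier side and reduces to the scalar kernel $k(t)$ of \eqref{E:k}--\eqref{E:k2}; the iterated convolutions $h_n(t)$ then sum to $H(t;\gamma)$, which is finite under Dalang's condition (Lemma~\ref{L:EstHt}). This yields the pointwise bound $\Norm{u_n(t,x)}_2\le C\,(|\mu|*G(t,\cdot))(x)\,H(t;\gamma)^{1/2}$ with no sup over $x$ and no auxiliary weight.

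In short: your roadmap is correct in spirit, but the step where you pass to $\sup_x$ (or a weighted sup) is replaced in \cite{CK15SHE} by the algebraic identity \eqref{E:GGGG}, which makes the $x$-dependence of the bound exactly $J_0(t,x)$ at every stage. That is the key idea you are missing, and without it the bookkeeping you anticipate being ``slightly delicate'' does not obviously close.
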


\subsection{Some special functions}
We first introduce some notation following \cite{CK15SHE}.
Denote
\begin{align}\label{E:k}
k(t):=\int_{\R^d}f(z)G(t,z)\ud z.
\end{align}
By Fourier transform, this function can be written in the following form
\begin{align}\label{E:k2}
k(t):=(2\pi)^{-d} \int_{\R^d}\hat{f}(\ud\xi)\exp\left(-\frac{t|\xi|^2}{2}\right).
\end{align}
Define $h_0(t):=1$ and for $n\ge 1$,
\begin{align}\label{E:hn}
 h_n(t)= \int_0^t \ud s \: h_{n-1}(s) k(t-s).
\end{align}
Let
\begin{align}\label{E:H}
 H(t;\gamma):= \sum_{n=0}^\infty \gamma^n h_n(t).
\end{align}
This function is defined through the correlation function $f$. The following lemma
tells us that this function has an exponential bound.

\begin{lemma}[Lemma 2.5 in \cite{CK15SHE} or Lemma 3.8 in \cite{BC16}]
\label{L:EstHt}
For all $t\ge 0$ and $\gamma\ge 0$,
\begin{align}
\label{E:Var} 
\limsup_{t\rightarrow\infty} \frac{1}{t}\log H(t;\gamma)\le \inf\left\{\beta>0:  \:\Upsilon\left(\beta\right) < \frac{1}{\gamma}\right\}.
\end{align}
\end{lemma}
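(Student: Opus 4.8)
The plan is to exploit the convolution structure of $H(\cdot;\gamma)$ together with a Laplace transform estimate. Since $h_0\equiv 1$ and $h_n=h_{n-1}\ast k$ by \eqref{E:hn}, summing \eqref{E:H} over $n$ shows that $H(\cdot;\gamma)$ is (formally) the solution of the renewal equation $H(t;\gamma)=1+\gamma\int_0^t H(s;\gamma)\,k(t-s)\,\ud s$. Two elementary facts about $k$ drive everything. First, $k\ge 0$ since $f\ge 0$ and $G>0$; hence every $h_n\ge 0$ and, by an immediate induction (writing $h_n(t)=\int_0^t h_{n-1}(t-u)\,k(u)\,\ud u$ and using $k\ge 0$ together with $h_{n-1}\ge 0$ nondecreasing), every $h_n$ is nondecreasing on $[0,\infty)$, and so is $H(\cdot;\gamma)$. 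Second, by \eqref{E:k2} and Tonelli the Laplace transform $\tilde k(\beta):=\int_0^\infty e^{-\beta t}k(t)\,\ud t=(2\pi)^{-d}\int_{\R^d}\hat f(\ud\xi)\,(\beta+|\xi|^2/2)^{-1}$, which under Dalang's condition \eqref{E:Dalang} is finite for every $\beta>0$, is comparable to $\Upsilon(\beta)$, and decreases to $0$ as $\beta\to\infty$ (by dominated convergence, with envelope $(\beta+|\xi|^2/2)^{-1}\le(|\xi|^2/2)^{-1}\wedge\beta^{-1}$).

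Next I would transfer this to the $h_n$ themselves. By Tonelli and induction, $\int_0^\infty e^{-\beta t}h_n(t)\,\ud t=\beta^{-1}\tilde k(\beta)^n$ for each $n$, and the monotonicity of $h_n$ upgrades this integrability to the pointwise bound
\[
e^{-\beta t}h_n(t)\le\beta\int_t^\infty e^{-\beta s}h_n(s)\,\ud s\le\beta\int_0^\infty e^{-\beta s}h_n(s)\,\ud s=\tilde k(\beta)^n ,
\]
i.e.\ $h_n(t)\le e^{\beta t}\tilde k(\beta)^n$ for all $t\ge 0$. Summing the geometric series, for any $\beta>0$ with $\gamma\tilde k(\beta)<1$ we obtain $H(t;\gamma)\le e^{\beta t}\bigl(1-\gamma\tilde k(\beta)\bigr)^{-1}<\infty$ (which also reproves that $H(t;\gamma)<\infty$), and therefore $\limsup_{t\to\infty}t^{-1}\log H(t;\gamma)\le\beta$.

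Taking the infimum over all admissible $\beta$ gives $\limsup_{t\to\infty}t^{-1}\log H(t;\gamma)\le\inf\{\beta>0:\gamma\tilde k(\beta)<1\}$, and the last step is to rewrite the condition $\gamma\tilde k(\beta)<1$ in terms of $\Upsilon$ using the explicit formula for $\tilde k$ above, which yields the asserted bound \eqref{E:Var}. (For $\gamma=0$ the statement is trivial, since $H(\cdot;0)\equiv1$.) The proof is short and I do not expect a substantive obstacle; the points that need care are bookkeeping ones — tracking the precise constants relating $\tilde k$ and $\Upsilon$ (equivalently, the normalization of the heat kernel) so that the critical value of $\beta$ is exactly the one appearing in \eqref{E:Var}, justifying the two appeals to Tonelli, and observing that $\tilde k(\beta)\to 0$ so that $\{\beta>0:\gamma\tilde k(\beta)<1\}$ is a nonempty interval $(\beta_c,\infty)$.
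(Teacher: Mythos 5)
Your overall strategy is the standard one and, as far as the cited sources go, essentially the argument used there (the paper itself gives no proof of Lemma \ref{L:EstHt}, it only cites \cite{CK15SHE,BC16}): positivity and monotonicity of the $h_n$, the Laplace transform identity $\int_0^\infty e^{-\beta t}h_n(t)\,\ud t=\beta^{-1}\tilde k(\beta)^n$, the monotonicity upgrade $h_n(t)\le e^{\beta t}\tilde k(\beta)^n$, and summation of the geometric series are all correct and give
\[
\limsup_{t\to\infty}\frac1t\log H(t;\gamma)\;\le\;\inf\bigl\{\beta>0:\ \gamma\,\tilde k(\beta)<1\bigr\},
\qquad \tilde k(\beta):=\int_0^\infty e^{-\beta t}k(t)\,\ud t .
\]

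The step you wave off as bookkeeping is, however, exactly where the proof of the literal statement \eqref{E:Var} fails. With this paper's normalizations ($G(t,x)=(2\pi t)^{-d/2}e^{-|x|^2/(2t)}$ and $\Upsilon(\beta)=(2\pi)^{-d}\int_{\R^d}\hat f(\ud\xi)/(\beta+|\xi|^2)$), formula \eqref{E:k2} gives $\tilde k(\beta)=(2\pi)^{-d}\int_{\R^d}\frac{\hat f(\ud\xi)}{\beta+|\xi|^2/2}=2\,\Upsilon(2\beta)$, not $\Upsilon(\beta)$. Hence your argument proves $\limsup_{t\to\infty}t^{-1}\log H(t;\gamma)\le\inf\{\beta>0:\ \Upsilon(2\beta)<\tfrac1{2\gamma}\}=\tfrac12\inf\{\beta>0:\ \Upsilon(\beta)<\tfrac1{2\gamma}\}$, which is in general strictly larger than the right-hand side of \eqref{E:Var}, and no amount of constant-tracking will close the gap, because \eqref{E:Var} as printed is not attainable: for the Riesz kernel $f(x)=|x|^{-\eta}$, $0<\eta<2\wedge d$, one has $\Upsilon(\beta)=C_\eta\beta^{\eta/2-1}$, so the right-hand side of \eqref{E:Var} equals $(C_\eta\gamma)^{2/(2-\eta)}$, whereas the abscissa of convergence of $\int_0^\infty e^{-\beta t}H(t;\gamma)\,\ud t$ (a lower bound for the limsup, since $H$ is nondecreasing and $\sum_n(\gamma\tilde k(\beta))^n$ diverges below it) is $2^{\eta/(2-\eta)}(C_\eta\gamma)^{2/(2-\eta)}$, strictly bigger. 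So the discrepancy is a normalization mismatch between \eqref{E:Var} and the cited lemmas rather than a flaw in your method; the statement your proof actually establishes has $2\Upsilon(2\beta)<1/\gamma$ in place of $\Upsilon(\beta)<1/\gamma$. This is harmless for the way the lemma is used later (Step 3 of the proof of Theorem \ref{T:Mom} only needs $\beta_p$ of order $p^{1/\alpha}$, and the extra factors are absorbed into the generic constant in \eqref{E:MomAlpha}), but your write-up should state the bound the argument delivers instead of asserting that the bookkeeping ``yields the asserted bound''.
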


\subsection{A remark on two recursions}

The purpose of this part is to compare the two recursions \eqref{E:IntIneq1}
and \eqref{E:TwoPoint1} below.
While Lemma \ref{L:IntIneq} gives an easy to use upper bound,
Lemma \ref{L:TwoPoint} is sharper and used in \cite{CK15SHE} to obtain lower bounds for the second moment. {Recursion \eqref{E:TwoPoint1} and its conclusion \eqref{E:TwoPoint2} will play a crucial role in the proof of part (1) of Theorem \ref{T:Approx}.}

In order to make this statement clear, we need to introduce some notation.
For $h,w:\R_+\times\R^{3d}\mapsto\R$,
define the (asymmetric convolution) operation ``$\rhd$'', which depends on $f$, as follows
\begin{equation}
\label{E:RHD-BACK}
\begin{aligned}
\left(h \rhd w\right)(t,x,x';y):=\int_0^t \ud s \iint_{\R^{2d}}\ud z \ud z'\; h(t-s,x-z,x'-z';y-(z-z'))&  \\
\qquad \times  w(s,z,z';y)\: f(y-(z-z'))& .
\end{aligned}
\end{equation}
By change of variables,
\begin{equation}
\label{E:RHD-FOR}
\begin{aligned}
\left(h \rhd w\right)(t,x,x';y):=\int_0^t \ud s \iint_{\R^{2d}}\ud z \ud z'\: h(s,z,z';y-[(x-z)-(x'-z')])&\\
\times  w(t-s,x-z,x'-z';y)\: f(y-[(x-z)-(x'-z')])&.
\end{aligned}
\end{equation}
This operation is associative (see Lemma B.1 in \cite{CK15SHE})
\[
\left(\left(h\rhd w\right)\rhd v\right)(t,x,x';y)
=\left(h\rhd \left(w\rhd v\right)\right)(t,x,x';y).
\]
We use the convention that if a function $h$ is defined on $\R_+\times\R^{2d}$ instead of
$\R_+\times\R^{3d}$, when applying the operation $\rhd$ to $h$,
it is meant for $h'(t,x,x';y):=h(t,x,x')$.

\bigskip
For $t>0$ and  $x,x',y\in\R^d$, define recursively:
\[
\calL_{0}(t,x,x';y):= G(t,x)G(t,x')
\]
and for $n\ge 1$,
\[
\calL_{n}(t,x,x';y):= \left(\calL_{0}\rhd \calL_{n-1}\right)(t,x,x';y).
\]
For $\lambda\in\R$,
Lemma 2.7 of \cite{CK15SHE} ensures that the following series is well defined
\begin{align}\label{E:calK}
\calK_{\lambda}(t,x,x';y):= \sum_{n=0}^\infty \lambda^{2(n+1)}\calL_{n}(t,x,x';y)
\le \calL_0(t,x,x') H(t;2\lambda^2).
\end{align}

Then the upper bounds for the two-point correlation function in Theorem 2.4 of \cite{CK15SHE}
can be summarized as the following lemma.

\begin{lemma}\label{L:TwoPoint}
If for some nonnegative function $J_*:\R_+\times\R^{2d}\mapsto\R_+$ and $\lambda\ge 0$, a function $g:\R_+\times\R^{2d}\mapsto\R$ satisfies the following integral inequality
\begin{align}\label{E:TwoPoint1}
\begin{aligned}
 g(t,x,x')\le J_*(t,x,x')+ \lambda^2 \int_0^t\ud s\iint_{\R^{2d}} &
 G(t-s,x-y_1) G(t-s,x'-y_2) \\
 &\times f(y_1-y_2) g(s,y_1,y_2)\ud y_1\ud y_2,
\end{aligned}
\end{align}
then
\begin{align}\label{E:TwoPoint3}
 g(t,x,x')\le J_*(t,x,x')+\left(\calK_\lambda\rhd J_*\right)(t,x,x';0).
\end{align}
In particular,
\begin{align}\label{E:TwoPoint2}
\begin{aligned}
 g(t,x,x')\le J_*(t,x,x')+ H(t;2\lambda^2)\int_0^t\ud s\iint_{\R^{2d}} &
 G(t-s,x-y_1) G(t-s,x'-y_2) \\
 &\times f(y_1-y_2) J_*(s,y_1,y_2)\ud y_1\ud y_2.
\end{aligned}
\end{align}
When the inequality in \eqref{E:TwoPoint1} is equality, the conclusion in \eqref{E:TwoPoint3} is also an equality.
\end{lemma}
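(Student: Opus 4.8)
The plan is to iterate the integral inequality \eqref{E:TwoPoint1} against itself and identify the resulting kernels with the series $\calK_\lambda$. First I would rewrite \eqref{E:TwoPoint1} in the compact ``$\rhd$'' notation: the right-hand side is exactly $J_*(t,x,x') + \left(\calL_0 \rhd g\right)(t,x,x';0)$, where $g$ on the right is viewed as a function on $\R_+\times\R^{3d}$ that does not depend on the last variable. Indeed, unwinding the definition \eqref{E:RHD-BACK} with the last slot set to $y=0$ gives $\left(\calL_0\rhd g\right)(t,x,x';0) = \int_0^t\ud s\iint_{\R^{2d}} G(t-s,x-y_1)G(t-s,x'-y_2) f(y_1-y_2) g(s,y_1,y_2)\,\ud y_1\ud y_2$, after the harmless relabelling $z=y_1$, $z'=y_2$ (so that $y-(z-z')=-(y_1-y_2)$ and $f$ is even). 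So \eqref{E:TwoPoint1} reads $g \le J_* + \lambda^2\,(\calL_0\rhd g)$.

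Next I would substitute this bound into itself repeatedly. Using the monotonicity of $\rhd$ in its second argument (the kernels $\calL_0$ and $f$ are nonnegative) and the associativity of $\rhd$ recalled from Lemma~B.1 of \cite{CK15SHE}, one iteration gives $g \le J_* + \lambda^2(\calL_0\rhd J_*) + \lambda^4\bigl(\calL_0\rhd(\calL_0\rhd g)\bigr)$, and an easy induction yields, for every $N\ge 1$,
\begin{align*}
g(t,x,x') \le J_*(t,x,x') + \sum_{n=0}^{N-1}\lambda^{2(n+1)} \left(\calL_n \rhd J_*\right)(t,x,x';0) + \lambda^{2(N+1)}\left(\calL_N \rhd g\right)(t,x,x';0),
\end{align*}
where I have used $\calL_n = \calL_0 \rhd \calL_{n-1}$ and associativity to collapse the $n$-fold composition of $\calL_0$ into $\calL_n$. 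To pass to the limit $N\to\infty$ I need the remainder term $\lambda^{2(N+1)}\left(\calL_N\rhd g\right)(t,x,x';0)$ to vanish; here I would invoke the summability bound $\calL_n(t,x,x';y) \le \calL_0(t,x,x') h_n(t) (\text{something})$ implicit in \eqref{E:calK} — more precisely the fact from Lemma~2.7 of \cite{CK15SHE} that $\sum_n \lambda^{2(n+1)}\calL_n(t,x,x';y) \le \calL_0(t,x,x') H(t;2\lambda^2) < \infty$, so the tail $\sum_{n\ge N}\lambda^{2(n+1)}\calL_n \to 0$; combined with a crude local bound on $g$ (which satisfies a bound of the same Gronwall type), the remainder $\to 0$. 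Summing the series then gives $g \le J_* + \left(\calK_\lambda \rhd J_*\right)(t,x,x';0)$, which is \eqref{E:TwoPoint3}. The equality case follows because every inequality used above becomes an equality when \eqref{E:TwoPoint1} does. Finally, \eqref{E:TwoPoint2} is the weakened form obtained by bounding $\calK_\lambda(t,x,x';y)\le \calL_0(t,x,x')H(t;2\lambda^2)$ inside the convolution and pulling the resulting factor $H(t;2\lambda^2)$ out; one should be slightly careful that $\calL_0(t,x,x') = G(t,x)G(t,x')$ combines with the $G(t-s,\cdot)$ factors from the definition of $\rhd$ in the right way, but this is exactly the bookkeeping already done in \cite{CK15SHE}.

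The main obstacle I anticipate is purely notational rather than conceptual: making the ``$y=0$'' specialization of the $\rhd$ operation mesh correctly with the two change-of-variables forms \eqref{E:RHD-BACK}/\eqref{E:RHD-FOR}, and keeping track of which of the two $G$ factors in each $\calL_0$ is being convolved at each stage. Once one checks that $\left(\calL_0 \rhd g\right)(t,x,x';0)$ really is the bilinear form appearing on the right of \eqref{E:TwoPoint1}, everything else is a standard Picard/Gronwall iteration whose convergence is guaranteed by the already-established bound \eqref{E:calK}. I would therefore spend most of the write-up justifying that identification and the vanishing of the remainder term, and treat the iteration and the passage to \eqref{E:TwoPoint2} as routine.
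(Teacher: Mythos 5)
Your proposal is correct and takes essentially the same route as the paper: both arguments rewrite \eqref{E:TwoPoint1} as $g\le J_*+\lambda^2\left(\calL_0\rhd g\right)$ evaluated at the fourth slot $0$, iterate in Picard fashion, collapse the nested kernels into $\calL_n$ via the associativity of $\rhd$, sum the series to get \eqref{E:TwoPoint3}, and then use the bound \eqref{E:calK} ($\calK_\lambda\le \calL_0\, H(t;2\lambda^2)$) to deduce \eqref{E:TwoPoint2}. The only cosmetic difference is that the paper reduces to the equality case and iterates the equation, whereas you iterate the inequality directly and dispose of the remainder term using the tail of the convergent series; both versions rely on the same implicit integrability of $g$ against the kernels, so this is bookkeeping rather than a different method.
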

\begin{proof}
This lemma is proved using the Picard iteration.
We need only to prove the case when both inequalities in \eqref{E:TwoPoint1} and \eqref{E:TwoPoint2}
are equalities.
Notice that \eqref{E:TwoPoint1} (with inequality replaced by equality) can be written as
\[
g(t,x,x')=J_*(t,x,x')+ \lambda^2 (\calL_0\rhd g)(t,x,x';0).
\]
Let
\[
g_0(t,x,x')
:=J_*(t,x,x'),
\]
and for $n\ge 1$,
\begin{align}
 g_n(t,x,x')
 = J_*(t,x,x') + \lambda^2 \left(\calL_0\rhd g_{n-1}\right)(t,x,x';0).
\end{align}
Then by the associativity of the operator $\rhd$, we see that
\begin{align*}
  g_n(t,x,x')
 = J_*(t,x,x') + \sum_{k=0}^n \lambda^{2(k+1)} \left(\calL_k\rhd J_*\right)(t,x,x';0).
\end{align*}
Therefore,
\begin{align*}
  g(t,x,x')=\lim_{n\rightarrow\infty}
  g_n(t,x,x')
 &= J_*(t,x,x') + \sum_{k=0}^\infty \lambda^{2(k+1)} \left(\calL_k\rhd J_*\right)(t,x,x';0)\\
 &= J_*(t,x,x') + \left(\calK_{\lambda} \rhd J_*\right)(t,x,x';0)\\
 &\le
J_*(t,x,x') + H(t;2\lambda^2)\left(\calL_0 \rhd J_*\right)(t,x,x';0),
\end{align*}
where in the last step we have applied the bound for $\calK$ in \eqref{E:calK}.
This proves Lemma \ref{L:TwoPoint}.
\end{proof}

\section{Moment bounds (Proof of Theorem \ref{T:Mom})}
\label{S:Mom}

While Lemma \ref{L:TwoPoint} is appropriate for dealing with the two-point correlation function,
the corresponding recursion for the $p$-point $(p>2)$ correlation function will be much more complicated.
We will instead consider the bounds for the $p$-th moment.
The following lemma will play the same role to the $p$-th moment as Lemma \ref{L:TwoPoint} to the two-point correlation function.

\begin{lemma}\label{L:IntIneq}
Suppose that $\mu$ is a signed measure that satisfies \eqref{E:J0finite} and
let $J_0(t,x)$ be the solution to the homogeneous equation (see \eqref{E:J0}).
If a nonnegative function $g:\R_+\times\R^{d}\mapsto\R_+$ satisfies the following integral inequality
\begin{align}\label{E:IntIneq1}
\begin{aligned}
 g(t,x)^2\le J_0^2(t,x)+ \lambda^2 \int_0^t\ud s\iint_{\R^{2d}} &
 G(t-s,x-y_1) G(t-s,x-y_2) \\
 &\times f(y_1-y_2) g(s,y_1)g(s,y_2)\ud y_1\ud y_2,
\end{aligned}
\end{align}
for all $t>0$ and $x\in\R^d$, then{
\begin{align}\label{E:IntIneq2}
\begin{aligned}
 g(t,x)\le \sqrt{2} \: \left(|\mu|*G(t,\cdot)\right)(x) \: H(t;2\lambda^2)^{1/2}.
\end{aligned}
\end{align}}
\end{lemma}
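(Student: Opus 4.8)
The plan is to exhibit an explicit super-solution of \eqref{E:IntIneq1} and then to deduce that $g$ is bounded by it through a bootstrap argument. Throughout write $\bar J_0(t,x):=(|\mu|*G(t,\cdot))(x)$, so that $|J_0(t,x)|\le\bar J_0(t,x)$ and \eqref{E:IntIneq1} gives $g(t,x)^2\le \bar J_0(t,x)^2+\lambda^2\,\Phi(g)(t,x)$, where
\[
\Phi(h)(t,x):=\int_0^t\ud s\iint_{\R^{2d}}G(t-s,x-y_1)\,G(t-s,x-y_2)\,f(y_1-y_2)\,h(s,y_1)\,h(s,y_2)\,\ud y_1\ud y_2 .
\]
The candidate super-solution is $g_0(t,x):=\sqrt2\,\bar J_0(t,x)\,H(t;2\lambda^2)^{1/2}$; it already covers the trivial case $\lambda=0$, where \eqref{E:IntIneq1} yields $g\le|J_0|\le\bar J_0\le g_0$.

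The essential estimate — and the step I expect to be the main obstacle — is a bound on the one-step bilinear integral that is uniform in $x$ and carries the \emph{same} Gaussian tail as $\bar J_0(t,x)^2$. Concretely, for $0<s<t$,
\[
\iint_{\R^{2d}}G(t-s,x-y_1)\,G(t-s,x-y_2)\,f(y_1-y_2)\,\bar J_0(s,y_1)\,\bar J_0(s,y_2)\,\ud y_1\ud y_2\ \le\ \bar J_0(t,x)^2\,k\bigl(\tfrac{2s(t-s)}{t}\bigr).
\]
To prove it I would insert $\bar J_0(s,y_i)=\int G(s,y_i-z_i)\,|\mu|(\ud z_i)$ and use the Gaussian bridge factorization $G(t-s,x-y)\,G(s,y-z)=G(t,x-z)\,G\bigl(\tfrac{s(t-s)}{t},\,y-\tfrac{sx+(t-s)z}{t}\bigr)$ to integrate out $y_1,y_2$; the inner integral collapses to $\bigl(G(\tfrac{2s(t-s)}{t},\cdot)*f\bigr)$ evaluated at $\tfrac{(t-s)(z_1-z_2)}{t}$, which is dominated by its value $k\bigl(\tfrac{2s(t-s)}{t}\bigr)$ at the origin since $G(r,\cdot)*f$ has nonnegative Fourier transform $e^{-r|\xi|^2/2}\hat f(\xi)$ and hence is nonnegative-definite. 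Keeping \emph{both} heat kernels is decisive: the tempting shortcut of Cauchy--Schwarz together with $\int G(t-s,x-y_2)f(y_1-y_2)\,\ud y_2\le k(t-s)$ reduces the integral to $k(t-s)\int G(t-s,x-y)\bar J_0(s,y)^2\,\ud y$, which is \emph{not} $O(\bar J_0(t,x)^2)$ — only half the heat flow acts on $\bar J_0^2$, producing $e^{-|x|^2/(2t)}$ tails, far too heavy against $\bar J_0(t,x)^2\sim e^{-|x|^2/t}$. An immediate corollary: if $h(s,y)^2\le C(s)\bar J_0(s,y)^2$ pointwise for some $C\ge0$, then $\Phi(h)(t,x)\le \bar J_0(t,x)^2\int_0^t C(s)\,k\bigl(\tfrac{2s(t-s)}{t}\bigr)\,\ud s$.

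Next I would check that $g_0$ is a super-solution. Applying the corollary with $C(s)=2H(s;2\lambda^2)$ (so that $g_0(s,y_1)g_0(s,y_2)=2H(s;2\lambda^2)\bar J_0(s,y_1)\bar J_0(s,y_2)$) gives $\bar J_0(t,x)^2+\lambda^2\Phi(g_0)(t,x)\le \bar J_0(t,x)^2\bigl(1+2\lambda^2\int_0^t H(s;2\lambda^2)k(\tfrac{2s(t-s)}{t})\,\ud s\bigr)$. Two elementary facts then close it: (i) $\int_0^t H(s;2\lambda^2)k(\tfrac{2s(t-s)}{t})\,\ud s\le 2\int_0^t H(s;2\lambda^2)k(t-s)\,\ud s$, obtained by folding the integral at $s=t/2$, using that $k$ is nonincreasing with $\tfrac{2s(t-s)}{t}\ge s$ on $(0,t/2)$ and that $H(\cdot;2\lambda^2)$ is nondecreasing; and (ii) the renewal identity $H(t;\gamma)=1+\gamma\int_0^t H(s;\gamma)k(t-s)\,\ud s$, immediate from \eqref{E:hn}--\eqref{E:H}. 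Combining, $\bar J_0(t,x)^2+\lambda^2\Phi(g_0)(t,x)\le \bar J_0(t,x)^2\bigl(2H(t;2\lambda^2)-1\bigr)\le g_0(t,x)^2$, a super-solution with a strict safety margin of exactly $\bar J_0(t,x)^2$ — this is precisely what the harmless factor $\sqrt2$ in \eqref{E:IntIneq2} is spent on.

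Finally I would pass from ``super-solution'' to $g\le g_0$. Set $\beta(t):=\sup_{x\in\R^d}g(t,x)^2/\bar J_0(t,x)^2$, which is finite, locally bounded on $(0,\infty)$, and satisfies $\limsup_{t\to0}\beta(t)\le1$ in every application of the lemma (there $g$ is an $L^p(\Omega)$-norm of the solution furnished by Theorem~\ref{T:ExUni}). By the corollary of Step~2, $\beta(t)\le1+\lambda^2\int_0^t\beta(s)k(\tfrac{2s(t-s)}{t})\,\ud s$. Let $T^\ast:=\sup\{T>0:\beta(t)\le 2H(t;2\lambda^2)\ \text{for all}\ t\le T\}$; then $T^\ast>0$, and if $T^\ast<\infty$ the computation of the previous paragraph, run with $\beta$ in place of $2H(\cdot;2\lambda^2)$ on $[0,T^\ast]$, forces $\beta(T^\ast)\le 2H(T^\ast;2\lambda^2)-1<2H(T^\ast;2\lambda^2)$, and the strict gap contradicts the maximality of $T^\ast$ by continuity of $\beta$ and of $t\mapsto H(t;2\lambda^2)$. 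Hence $g(t,x)^2\le 2\bar J_0(t,x)^2H(t;2\lambda^2)$, which is \eqref{E:IntIneq2}. (One may instead avoid any a priori bound on $g$ by running the Picard scheme $v_0:=\bar J_0^2$, $v_{n+1}:=\bar J_0^2+\lambda^2\Phi(\sqrt{v_n})$, whose iterates satisfy $v_n\le 2\bar J_0^2H(\cdot;2\lambda^2)$ for every $n$ by the super-solution property of $g_0$.)
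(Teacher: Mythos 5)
Your core estimate is exactly the paper's: you insert $\bar J_0(s,y_i)=\int G(s,y_i-z_i)\,|\mu|(\ud z_i)$, apply the factorization \eqref{E:GGGG}, and kill the residual dependence on $z_1-z_2$ by nonnegative-definiteness, which yields precisely the factor $k\bigl(\tfrac{2s(t-s)}{t}\bigr)$; the paper does the same computation in Fourier variables, bounding the oscillatory integral by $(2\pi)^{-d}\int\hat f(\ud\xi)\,e^{-s(t-s)|\xi|^2/t}$, which is the same quantity by \eqref{E:k2}. Your folding inequality (i) is literally Lemma \ref{L:shatf} (applied to the nonincreasing $k$ rather than pointwise in $\xi$), and your renewal identity (ii) is \eqref{E:hn}--\eqref{E:H} summed. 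The only real difference is packaging: the paper runs a Picard induction, showing $g_n\le\sqrt2\,\bar J_0\bigl(\sum_{i\le n}(2\lambda^2)^i h_i(t)\bigr)^{1/2}$ step by step, while you verify once and for all that $\sqrt2\,\bar J_0\,H(\cdot;2\lambda^2)^{1/2}$ is a super-solution with margin $\bar J_0^2$ — a slightly cleaner way to see where the $\sqrt2$ is spent, and it feeds into Theorem \ref{T:Mom} just as well.

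The weak point is your final step. The lemma is a purely deterministic statement about an arbitrary nonnegative $g$ satisfying \eqref{E:IntIneq1}, but your continuation argument needs $\beta(t)=\sup_x g(t,x)^2/\bar J_0(t,x)^2$ to be finite, locally bounded and continuous, with $\limsup_{t\to0}\beta(t)\le 1$ — none of which follow from the hypotheses, and which you explicitly import from ``the application''. That does not prove the lemma as stated (and in the paper the induction is in fact run on the Picard iterates of \eqref{E:SHE}, precisely so that no a priori control of the solution is needed). Your parenthetical fallback — the scheme $v_0=\bar J_0^2$, $v_{n+1}=\bar J_0^2+\lambda^2\Phi(\sqrt{v_n})$, bounded uniformly by $2\bar J_0^2H(\cdot;2\lambda^2)$ via the super-solution property — is the right fix and is in substance the paper's own proof; note, though, that it still requires the reduction from the inequality \eqref{E:IntIneq1} to the equality case (identifying a sub-solution with the limit of the iterates), a step the paper also states without proof, so you should make that reduction explicit rather than suggest the continuation argument handles it.
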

\begin{proof}
We prove this lemma using the Picard iteration.
As the proof of Lemma \ref{L:TwoPoint}, we need only
to prove the case when the inequality in \eqref{L:IntIneq} is an equality.
Let{
\[
g_0(t,x) = \left(|\mu|*G(t,\cdot)\right)(x),
\]}
and for $n\ge 1$,
\begin{align}\label{E:gn}
\begin{aligned}
 g_{n}^2(t,x) = J_0^2(t,x) + \lambda^2 \int_0^t\ud s\iint_{\R^{2d}} & G(t-s,x-y_1)G(t-s,x-y_2) f(y_1-y_2)\\
 &\times g_{n-1}(s,y_1)g_{n-1}(s,y_2)\ud y_1 \ud y_2.
\end{aligned}
\end{align}
For $\gamma=2\lambda^2$, we claim that{
\begin{align}\label{E2:Indt-Lp}
g_n(t,x)\le \sqrt{2}\: g_0(t,x) \left(\sum_{i=0}^n \gamma^i h_i(t)\right)^{1/2},\quad\text{for all $n\ge 0$.}
\end{align}}
It is clear that \eqref{E2:Indt-Lp} holds  for $n=0$.
Suppose that \eqref{E2:Indt-Lp} is true for $n\ge 0$.
Notice that
\begin{align*}
g_{n+1}^2(t,x)
=& J_0^2(t,x)
+\lambda^2 \int_0^t \iint_{\R^{2d}}
G(t-s,x-y_1)
G(t-s,x-y_2) f(y_1-y_2)\\
&\hspace{10em}\times g_n(s,y_1) g_n(s,y_2) \ud s\ud y_1\ud y_2\\
=: & J_0^2(t,x) + \lambda^2\:  I(t,x).
\end{align*}
By the induction assumption,
\begin{align*}
I(t,x)\le&
2\int_0^t \ud s\iint_{\R^{2d}} \ud y_1 \ud y_2\:f(y_1-y_2)
G(t-s,x-y_1)
G(t-s,x-y_2) \\
&\times
|J_0(s,y_1)| \:
|J_0(s,y_2)|
\left(\sum_{i=0}^n \gamma^i h_i(s)\right) \\
=&
2\int_0^t \ud s\iint_{\R^{2d}}\ud y_1\ud y_2\:f(y_1-y_2)
G(t-s,x-y_1)
G(t-s,x-y_2) \\
&
{
\times 
\iint_{\R^{2d}}|\mu|(\ud z_1)|\mu|(\ud z_2)G(s,y_1-z_1)G(s,y_2-z_2) \left(\sum_{i=0}^n \gamma^i h_i(s)\right).}
\end{align*}
Because (see \cite[Lemma 5.4]{ChenDalang13Heat})
\begin{align}\label{E:GGGG}
G(s,x)G(t-s,y) =
G\left(\frac{s(t-s)}{t},\frac{sy-(t-s)x}{t}\right)G(t,x+y),
\end{align}
we see that
\[
G(t-s,x-y_1)
G(s,y_1-z_1) = G(t,x-z_1) G\left(\frac{s(t-s)}{t}, y_1-z_1-\frac{s}{t}(x-z_1)\right).
\]
Hence,
\begin{align*}
I(t,x) \le &
2\int_0^t \ud s \left(\sum_{i=0}^n \gamma^i h_i(s)\right) \iint_{\R^{2d}}\ud y_1\ud y_2\:f(y_1-y_2)\\
&\times G\left(\frac{s(t-s)}{t}, y_1-z_1-\frac{s}{t}(x-z_1)\right)\\
&\times
G\left(\frac{s(t-s)}{t}, y_2-z_2-\frac{s}{t}(x-z_2)\right)\\
&
{
\times \iint_{\R^{2d}}|\mu|(\ud z_1)|\mu|(\ud z_2) \:G(t,x-z_1)G(t,x-z_2).}
\end{align*}
By Fourier transform, we see that
the above double integral $\ud y_1\ud y_2$ is equal to
\begin{align*}
(2\pi)^{-d}
\int_{\R^d}
\hat{f}(\ud \xi)&
\exp\left(i \frac{t-s}{t}(z_1-z_2) \cdot \xi - \frac{s(t-s)}{t}|\xi|^2\right).
\end{align*}
Since $f$ is nonnegative and nonnegative definite, this integral is bounded by
\begin{align*}
(2\pi)^{-d}\int_{\R^d}
\hat{f}(\ud \xi)
\exp\left(- \frac{s(t-s)}{t}|\xi|^2\right).
\end{align*}
Hence,
\begin{align}
 \notag
 I(t,x)\le&
 2\ g_0^2(t,x)
 \int_0^t\ud s
 \left(\sum_{i=0}^n \gamma^i h_i(s)\right)
 (2\pi)^{-d}\int_{\R^d}
\hat{f}(\ud \xi)
\exp\left(- \frac{s(t-s)}{t}|\xi|^2\right).
\end{align}
Then using the fact that $t\rightarrow h_i(t)$ is nondecreasing (see Lemma 2.6 in \cite{CK15SHE}),
by Lemma \ref{L:shatf} with $\beta=|\xi|^2/2$, we see that
\begin{align*}
I(t,x)
\le &4\ g_0^2(t,x)
\int_{0}^t\ud s
 \left(\sum_{i=0}^n \gamma^i h_i(s)\right)
(2\pi)^{-d}\int_{\R^d}
\hat{f}(\ud \xi)
\exp\left(- \frac{t-s}{2}|\xi|^2\right).
\end{align*}
Then by \eqref{E:k2} and \eqref{E:hn}, we see that
\begin{align*}
I(t,x) 
& \le 4\ g_0^2(t,x)
\int_0^t
\ud s
 \left(\sum_{i=0}^n \gamma^i h_i(s)\right)
k(t-s)\\
&= 4\ g_0^2(t,x) \sum_{i=0}^n \gamma^i h_{i+1}(t).
\end{align*}
Therefore,
\begin{align*}
g_{n+1}^2(t,x) &\le
g_0^2(t,x) + 4\lambda^2 g_0^2(t,x) \sum_{i=0}^n \gamma^i h_{i+1}(t)\le 2J_0^2(t,x) \sum_{i=0}^{n+1} \gamma^i h_{i}(t).
\end{align*}
This proves \eqref{E2:Indt-Lp}.
Finally,
\begin{align*}
g(t,x)\le \lim_{n\rightarrow\infty}  \sqrt{2} g_0(t,x) \left(\sum_{i=0}^{n} \gamma^i h_{i}(t)\right)^{1/2}
=\sqrt{2} \ g_0(t,x) \left(\sum_{i=0}^{\infty} \gamma^i h_{i}(t)\right)^{1/2},
\end{align*}
which completes the proof of Lemma \ref{L:IntIneq}.
\end{proof}

\bigskip

\begin{proof}[Proof of Theorem \ref{T:Mom}]
The unique solution in $L^2(\Omega)$ has been established in \cite{CK15SHE}.
We will prove the moment bounds  in three steps.

{\bigskip\noindent\bf Step 1.~}
Now we prove this moment bound using the Picard iteration.
Let
\[
u_0(t,x) = J_0(t,x),
\]
and for $n\ge 1$,
\begin{align}\label{E:Un}
u_{n}(t,x) = J_0(t,x) + \int_0^t\int_{\R^d} G(t-s,x-y) \rho(u_{n-1}(s,y))M(\ud s,\ud y).
\end{align}
{Since $\rho$ is Lipschitz, by denoting $\Vip=|\rho(0)|/\Lip_\rho$,} 
\[
\Norm{\rho\left(X\right)}_p
\le \Lip_\rho \Norm{\Vip + |X|\: }_p
\le \Lip_\rho \sqrt{2 \left(\Vip^2 + \Norm{X}_p^2 \right)}\:.
\]
Because by the Burkholder-Davis-Gundy inequality and
linear growth condition of $\rho$,
\begin{align*}
\Vip^2+\Norm{u_{n+1}(t,x)}_p^2
\le& \Vip^2+ 2J_0^2(t,x)
+8p \int_0^t \iint_{\R^{2d}}
G(t-s,x-y_1)
G(t-s,x-y_2) f(y_1-y_2)\\
&\hspace{6em}\times \Norm{\rho(u_n(s,y_1))}_{p}\Norm{\rho(u_n(s,y_2))}_{p} \ud s\ud y_1\ud y_2\\
\le& \Vip^2+2J_0^2(t,x)
+16p \Lip_\rho^2 \int_0^t \iint_{\R^{2d}}
G(t-s,x-y_1)
G(t-s,x-y_2) f(y_1-y_2)\\
&\hspace{6em} \times
\sqrt{\Vip^2+\Norm{u_n(s,y_1)}_{p}^2}
\sqrt{\Vip^2+\Norm{u_n(s,y_2)}_{p}^2}
\ud s\ud y_1\ud y_2,
\end{align*}
we can apply the same induction arguments as those in the proof of Lemma \ref{L:IntIneq}
with $\lambda^2=16p\Lip_\rho^2$ and $g_n(t,x)=\sqrt{\Vip^2+\Norm{u_n(t,x)}_p^2}$ and $J_0(t,x)$ replaced by $\Vip+\sqrt{2} J_0(t,x)$ to conclude that
{
\begin{align}\notag
\Norm{u_n(t,x)}_p& \le  \sqrt{\Vip^2+\Norm{u_n(t,x)}_p^2}\\
&\le
\sqrt{2} \left(\Vip+\sqrt{2} \: \left(|\mu|*G(t,\cdot)\right)(x)\right) \left(\sum_{i=0}^n \left(32p\Lip_\rho^2\right)^i h_i(t)\right)^{1/2},
\label{E:Indt-Lp}
\end{align}}
for all $n\ge 0$.

{\bigskip\noindent\bf Step 2.~}
In this step, we will show that
$\{u_n(t,x),\:n\in\bbN\}$ defined in \eqref{E:Un} is a Cauchy sequence in $L^p(\Omega)$.
{Without loss of generality, we may assume that $\mu\ge 0$, otherwise one may simply replace $\mu$ by $|\mu|$ at each occurrence of $\mu$.}
This will then imply the moment bound in \eqref{E:Mom}.
Denote
\[
F_n(t,x)=\Norm{u_{n+1}(t,x)-u_{n}(t,x)}_p.
\]
Then
\begin{align*}
F_n^2(t,x)\le& 8p\Lip_\rho^2 \int_0^t\ud s\iint_{\R^{2d}} \ud y_1\ud y_2\:
G(t-s,x-y_1)G(t-s,x-y_2)\\
&\times f(y_1-y_2) F_{n-1}(s,y_1)F_{n-1}(s,y_2),
\end{align*}
for $n\ge 1$, and
\begin{align*}
F_0^2(t,x) =& \Norm{u_1(t,x)-J_0(t,x)}_p^2
\\
\le& 8p\LIP_\rho^2 \int_0^t\ud s\iint_{\R^{2d}} \ud y_1\ud y_2\:
G(t-s,x-y_1)G(t-s,x-y_2)\\
&\times f(y_1-y_2) J_0(s,y_1)J_0(s,y_2).
\end{align*}
Then by setting $F_{-1}(t,x):= J_0(t,x)$ and $\gamma=16p\LIP_\rho^2$, we see that one can apply the
same induction arguments in the proof of Lemma \ref{L:IntIneq} to conclude that
\begin{align*}
 \sum_{n=0}^\infty F_n(t,x)\le \sqrt{2} J_0(t,x) \left(\sum_{i=0}^\infty \gamma^i h_i(t)\right)^{1/2}<\infty.
\end{align*}
Therefore, $\{u_n(t,x),\:n\in\bbN\}$ is a Cauchy sequence in $L^p(\Omega)$ and
\begin{align*}
\Norm{u(t,x)}_p &
= \lim_{n\rightarrow\infty} \Norm{u_n(t,x)}_p\\
&\le \lim_{n\rightarrow\infty}
\sqrt{2}\left(\Vip+\sqrt{2} J_0(t,x) \right) \left(\sum_{i=0}^n (32p\Lip_\rho^2)^i h_i(t)\right)^{1/2}\\
&=
\sqrt{2}\left(\Vip+\sqrt{2} J_0(t,x) \right) H\left(t;32p\Lip_\rho^2\right)^{1/2}<\infty.
\end{align*}
This proves \eqref{E:Mom}.

{\bigskip\noindent\bf Step 3.~}
In this step, we will prove \eqref{E:MomAlpha}.
Notice that in this case for $\beta>0$,
\begin{align*}
\Upsilon(\beta)&=(2\pi)^{-d}\int_{\R^d} \frac{1}{\left(\beta+|\xi|^2\right)^{\alpha}} \frac{\hat{f}(\ud \xi)}{\left(\beta+|\xi|^2\right)^{1-\alpha}} \\
&\le \frac{C}{\beta^\alpha}
\left(\int_{|\xi|\le 1} \frac{\hat{f}(\ud \xi)}{\beta^{1-\alpha}} + \int_{|\xi|> 1} \frac{\hat{f}(\ud \xi)}{|\xi|^{2(1-\alpha)}}\right)
\le  C \left(\frac{1}{\beta}+\frac{1}{\beta^\alpha}\right).
\end{align*}
From now on fix the constant $C$ on the right-hand side of the above inequalities.
If $p$ is large enough such that $32p\Lip_\rho^2 C>1$, then
\[
C \left(\frac{1}{\beta}+\frac{1}{\beta^\alpha}\right)\le \frac{1}{32p\Lip_\rho^2}
\quad \Longleftarrow\quad
\frac{2 C}{\beta^\alpha} \le \frac{1}{32p\Lip_\rho^2}
\quad \Longleftrightarrow\quad
\beta\ge \left(C 64 p \Lip_\rho^2\right)^{1/\alpha} =:\beta_p.
\]
Then an application of Lemma \ref{L:EstHt} shows that
\[
\limsup_{t\rightarrow\infty}\frac{1}{t}\log H(t;32p\Lip_\rho^2) \le \beta_p.
\]
Hence, the function $e^{-\beta_p t}H(t;32p\Lip_\rho^2)$ is a continuous function on $[0,\infty]$.
Therefore, for some constant $C'>0$, $e^{-\beta_p t}H(t;32p\Lip_\rho^2)\le C'$ for all $t\ge 0$.
This proves \eqref{E:MomAlpha} and also completes the whole proof of Theorem \ref{T:Mom}.
\end{proof}

\section{H\"older regularity (Proof of Theorem \ref{T:Holder})}
\label{S:Holder}

We first prove one lemma.

\begin{lemma}\label{L:GG-xt}
For all $\alpha\in(0,1]$, $x,y\in\R^d$ and $t'\ge t>0$, we have that
\begin{align}\label{E:GG-x}
 \left|G(t,x)-G(t,y)\right|\le\frac{C}{t^{\alpha/2}}\left[G(2t,x)+G(2t,y)\right]|x-y|^{\alpha},
\end{align}
and
\begin{align}\label{E:GG-t}
 \left|G(t,x)-G(t',x)\right| \le C t^{-\alpha/2}G\left(4t',x\right)
 (t'-t)^{\alpha/2}.
\end{align}
\end{lemma}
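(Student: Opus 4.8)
The plan is to reduce both estimates to elementary one-dimensional facts about the Gaussian kernel, exploiting the product structure $G(t,x)=\prod_{i=1}^d (2\pi t)^{-1/2}\exp(-x_i^2/(2t))$ together with a telescoping argument, and then interpolate between the trivial $L^\infty$ bound and the bound coming from one derivative. For the spatial increment \eqref{E:GG-x}, I would first prove the case $|x-y|\le \sqrt{t}$ and the complementary case $|x-y|>\sqrt{t}$ separately. In the far regime $|x-y|>\sqrt{t}$ one simply bounds $|G(t,x)-G(t,y)|\le G(t,x)+G(t,y)$ and uses $1\le (|x-y|/\sqrt t)^\alpha = t^{-\alpha/2}|x-y|^\alpha$, absorbing the harmless change of $G(t,\cdot)$ into $G(2t,\cdot)$ at the cost of a constant; here the role of $2t$ is just to leave room for such comparisons. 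In the near regime, write $G(t,x)-G(t,y)=\int_0^1 \nabla G(t, y+\theta(x-y))\cdot(x-y)\,\ud\theta$, use $|\nabla G(t,z)| \le C t^{-1/2}\,(|z|/\sqrt t)\,G(t,z) \le C t^{-1/2} G(2t,z)$ (since $s e^{-s^2/2}\lesssim e^{-s^2/4}$), and then control $G(2t, y+\theta(x-y))$ by $C\,[G(4t,x)+G(4t,y)]$ using that $|y+\theta(x-y)-x|$ and $|y+\theta(x-y)-y|$ are at most $|x-y|\le\sqrt t$, so the Gaussian weights are comparable up to a constant. This gives the bound with exponent $1$ in $|x-y|$ and prefactor $t^{-1/2}$; interpolating against the trivial bound $|G(t,x)-G(t,y)|\le G(t,x)+G(t,y)$ (exponent $0$, prefactor $1$) by raising the first to the power $\alpha$ and the second to the power $1-\alpha$ yields \eqref{E:GG-x}, after once more merging $G(2t,\cdot)$-type weights into $G(2t,x)+G(2t,y)$.

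For the temporal increment \eqref{E:GG-t}, the analogous strategy is to write $G(t,x)-G(t',x)=\int_t^{t'} -\partial_s G(s,x)\,\ud s$ and use the heat equation $\partial_s G(s,x)=\tfrac12\Delta_x G(s,x)$, together with the pointwise bound $|\Delta_x G(s,x)|\le C s^{-1}\,(1+|x|^2/s)\,G(s,x)\le C s^{-1} G(2s,x)$ and monotonicity-type comparisons $G(2s,x)\le C\,G(2t',x)$ for $s\in[t,t']$ (using $s\le t'$ so that the Gaussian at the wider time is larger up to the ratio of normalizations, which is bounded on $s\ge t$... ). Concretely, $\int_t^{t'} s^{-1} G(2s,x)\,\ud s \le C\,G(4t',x)\int_t^{t'} s^{-1}\,\ud s$; to get the Hölder exponent $\alpha/2$ one again interpolates: the exponent-$1$ bound $|G(t,x)-G(t',x)|\le C t^{-1} G(4t',x)(t'-t)$ valid when $t'-t\le t$, combined with the trivial bound $|G(t,x)-G(t',x)|\le G(t,x)+G(t',x)\le C\,G(4t',x)$ (for $t'\ge t$, again up to constants), raised to powers $\alpha$ and $1-\alpha$ respectively, produces $C\,t^{-\alpha/2}G(4t',x)(t'-t)^{\alpha/2}$; the case $t'-t>t$ follows directly from the trivial bound since then $1\le ((t'-t)/t)^{\alpha/2}$.

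The main obstacle I anticipate is purely bookkeeping rather than conceptual: keeping the various dilated Gaussians $G(t,\cdot), G(2t,\cdot), G(4t,\cdot)$ straight and making sure each comparison step ($se^{-s^2/2}\lesssim e^{-s^2/4}$, shifting the spatial argument by a vector of length $\le\sqrt t$, enlarging the time parameter) only costs a dimension-dependent constant, so that the final constant $C$ in \eqref{E:GG-x}–\eqref{E:GG-t} depends only on $\alpha$ and $d$. A secondary subtlety is that the interpolation "multiply a bound with exponent $1$ by a bound with exponent $0$ raised to fractional powers" must be done on the \emph{same} right-hand side structure; since both the gradient bound and the trivial bound can be written with a common Gaussian envelope (after the dilations above), this works, but one must verify it explicitly rather than wave hands. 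None of the steps requires more than the two-line Gaussian estimates above plus the heat equation, so once the regimes $|x-y|\lessgtr\sqrt t$ and $t'-t\lessgtr t$ are separated the computation is routine.
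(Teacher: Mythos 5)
Your route is genuinely different from the paper's. For \eqref{E:GG-x} the paper scales to $t=1$ and then replaces $x$ by a rotated point $\bar x$ with $|\bar x|=|x|$ lying on the ray through $y$, so that the mean-value segment stays at distance at least $|x|$ from the origin and the Gaussian at the intermediate point is dominated by $G(2,x)+G(2,y)$ with no case distinction; you instead split into the regimes $|x-y|\lessgtr\sqrt t$ and absorb the shift of length at most $\sqrt t$ into a further dilation of the Gaussian. For \eqref{E:GG-t} the paper splits the difference into a prefactor part and an exponential part and reuses \eqref{E:GG-x} after scaling, whereas you integrate $\partial_s G=\tfrac12\Delta G$ in time. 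Both reductions are workable and arguably more transparent; the final interpolation (raising the Lipschitz-type bound to the power $\alpha$ and the trivial bound to the power $1-\alpha$) is exactly the paper's step.

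Two concrete corrections are needed. First, in the near regime your chain as written produces the envelope $G(4t,\cdot)$ (gradient bound $t\to 2t$, then shift $2t\to 4t$), and the final ``merge into $G(2t,x)+G(2t,y)$'' is not legitimate, since $G(4t,x)\le C\,G(2t,x)$ fails for large $|x|$. You must budget the dilations more tightly, e.g. bound $|\nabla G(t,z)|\le C t^{-1/2}G(\tfrac32 t,z)$ and then use $|z-x|\le\sqrt t$ to get $G(\tfrac32 t,z)\le C\,G(2t,x)$ (with $u=|x|/\sqrt t$ the needed inequality is $u^2/4-(u-1)^2/3\le 1$), which restores the stated $G(2t,x)+G(2t,y)$; otherwise you only prove a $G(4t,\cdot)$ variant, which would serve the applications but is not the lemma as stated. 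Second, in the temporal part the comparison $G(2s,x)\le C\,G(4t',x)$ costs the normalization ratio $(2t'/s)^{d/2}\le(2t'/t)^{d/2}$, which is a universal constant only because you invoke it in the regime $t'-t\le t$; but your far-regime step $G(t,x)+G(t',x)\le C\,G(4t',x)$ is false for $t'\gg t$ (take $x=0$). This defect is not really yours: the paper's own proof ends by ``using the fact that $G(t,x)\le C G(4t',x)$,'' which fails in the same way, and indeed \eqref{E:GG-t} cannot hold with a constant independent of $t'/t$ whenever $\alpha<d$ (test $x=0$, $t$ fixed, $t'\to\infty$: the left side tends to $(2\pi t)^{-d/2}$ while the right side tends to $0$). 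The estimate is only applied with $t,t'\in[1/n,n]$ in the proof of Theorem \ref{T:Holder}, where $t'/t$ is bounded and both arguments are fine; so state the restriction $t'\le 2t$ (or let $C$ depend on a bound for $t'/t$), fix the dilation bookkeeping above, and your proof goes through.
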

\begin{proof}
We first prove \eqref{E:GG-x}.
By the scaling property, it suffices to prove that
\[
 \left|G(1,x)-G(1,y)\right|\le \left[G(2,x)+G(2,y)\right]|x-y|^{\alpha}.
\]
We may assume that $|x|\leq |y|$. Choosing $\bar{x}\in \RR^d$ such that $|\bar{x}|=|x|$ and $y = a \bar{x}$ for some $a\geq 1$, i,e, $\bar x$, $y$ and the origin are on the same line. By the mean-value theorem, for some $c\in[0,1]$ and $\xi = c\bar{x} +(1-c)y$,
\begin{align*}
 \left|G(1,x)-G(1,y)\right| =& \left|G(1,\bar{x})-G(1,y)\right|  \le G(1,\xi) |\xi| |\bar{x}-y|
 \le C G(2,\xi)  |\bar{x}-y|.
\end{align*}
Then by the choice of $\bar{x}$, we see that
\[
 G(2,\xi)  |\bar{x}-y|
\le C \left[G(2,\bar{x})+G(2,y)\right]|\bar{x}-y| 
 \le C \left[G(2,x)+G(2,y)\right]|x-y|\,.
\]
Therefore,
\begin{align*}
\left|G(1,x)-G(1,y)\right| & =
\left|G(1,x)-G(1,y)\right|^\alpha \left|G(1,x)-G(1,y)\right|^{1-\alpha}\\
&\le C \left[G(2,x)+G(2,y)\right]^\alpha |x-y|^\alpha
\left|G(2,x)+G(2,y)\right|^{1-\alpha}\\
&=
C \left[G(2,x)+G(2,y)\right]|x-y|^\alpha,
\end{align*}
which proves \eqref{E:GG-x}.

As for \eqref{E:GG-t}, notice that
\begin{align*}
 |G(t,x)-G(t',x)|
 \le & (2\pi)^{-d/2} \left|t^{-d/2}-(t')^{-d/2}\right|e^{-\frac{|x|^2}{2t}}
 +(2\pi)^{-d/2} (t')^{-d/2}\left|e^{-\frac{|x|^2}{2t}}-e^{-\frac{|x|^2}{2t'}}\right|\\
 =&
  t^{d/2} \left|t^{-d/2}-(t')^{-d/2}\right|G(t,x)
 + (t')^{-d/2}\left|G\left(1,\frac{x}{\sqrt{t}}\right)-G\left(1,\frac{x}{\sqrt{t'}}\right)\right|.
\end{align*}
For any $\gamma\in(0,1)$, because $t'>t$,
\begin{align}
\notag
 \left|t^{-d/2}-(t')^{-d/2}\right| &=
 \left|t^{-d/2}-(t')^{-d/2}\right|^{1-\gamma}
 \left|t^{-d/2}-(t')^{-d/2}\right|^{\gamma}\\
\notag
 &\le C \left[2 t^{-d/2}\right]^{1-\gamma}\left[\left(t^{-d/2-1}+(t')^{-d/2-1}\right)|t-t'|\right]^\gamma\\
 &\le C t^{-d/2-\gamma} |t-t'|^\gamma.
 \label{E:t t' increment}
\end{align}
By \eqref{E:GG-x}, for all $\alpha\in(0,1]$,
\begin{align*}
 \left|G\left(1,\frac{x}{\sqrt{t}}\right)-G\left(1,\frac{x}{\sqrt{t'}}\right)\right|
 \le& C \left[G\left(2,\frac{x}{\sqrt{t}}\right)+G\left(2,\frac{x}{\sqrt{t'}}\right)\right]
 |x|^\alpha \left|t^{-1/2}-(t')^{-1/2}\right|^\alpha\\
 \le& C G\left(2,\frac{x}{\sqrt{t'}}\right)
 |x|^\alpha \left|t^{-1/2}-(t')^{-1/2}\right|^\alpha\\
 = &
 C (t')^{d/2}G\left(2t',x\right)
 |x|^\alpha \left|t^{-1/2}-(t')^{-1/2}\right|^\alpha.
\end{align*}
By the subadditivity of $\sqrt{x}$, we see that
\[
|t^{-1/2}-(t')^{-1/2}| = \frac{\sqrt{t'} - \sqrt{t}}{\sqrt{t t'}}
=
\frac{\sqrt{t'-t+t} - \sqrt{t}}{\sqrt{t t'}}
\le
\frac{\sqrt{t'-t}+\sqrt{t} -\sqrt{t}}{\sqrt{t t'}} =
\frac{\sqrt{t'-t}}{\sqrt{t t'}}.
\]
Hence,
\begin{align*}
 \left|G\left(1,\frac{x}{\sqrt{t}}\right)-G\left(1,\frac{x}{\sqrt{t'}}\right)\right|
 \le &
 C t^{-\alpha/2}(t')^{(d-\alpha)/2}G\left(2t',x\right)
 |x|^\alpha (t'-t)^{\alpha/2}\\
 \le &
 C t^{-\alpha/2}(t')^{d/2}G\left(4t',x\right)
 (t'-t)^{\alpha/2}.
\end{align*}
The bound in \eqref{E:GG-t} is proved by taking $\gamma=\alpha/2$ in \eqref{E:t t' increment} and using the fact that
$G(t,x)\le C G(4t',x)$.
This completes the proof of Lemma \ref{L:GG-xt}.
\end{proof}

\bigskip
\begin{proof}[Proof of Theorem \ref{T:Holder}.]
Denote the stochastic integral in \eqref{E:mild} by $\calI(t,x)$.
{Set $\Vip=|\rho(0)|/\Lip_\rho$.}
We need only to prove the H\"older regularity for $\calI(t,x)$.
Fix $n>1$.
For all $(t,x)$ and $(t',x')\in [1/n,n]\times \R^d$ with $t'>t$,
we see that
\begin{align*}
 \Norm{\calI(t,x)-\calI(t',x')}_p^2
 \le C I_1(t,x,x') + C I_2(t,t',x') + C I_3(t,t',x'),
\end{align*}
where
\begin{align}
\label{E:HolderI1}
\begin{aligned}
I_1(t,x,x') =&\int_0^t\ud s\iint_{\R^{2d}}\ud y_1\ud y_2 \:f(y_1-y_2)\\
&\times \left|G(t-s,x-y_1)-G(t-s,x'-y_1)\right| \sqrt{\Vip^2 + \Norm{u(s,y_1)}_p^2} \\
&\times \left|G(t-s,x-y_2)-G(t-s,x'-y_2)\right| \sqrt{\Vip^2 + \Norm{u(s,y_2)}_p^2}
\end{aligned}
\end{align}
and
\begin{align}
 \label{E:HolderI2}
 \begin{aligned}
I_2(t,t',x') =&\int_0^t\ud s\iint_{\R^{2d}}\ud y_1\ud y_2\:f(y_1-y_2) \\
&\times \left|G(t-s,x'-y_1)-G(t'-s,x'-y_1)\right| \sqrt{\Vip^2 + \Norm{u(s,y_1)}_p^2}\\
&\times \left|G(t-s,x'-y_2)-G(t'-s,x'-y_2)\right| \sqrt{\Vip^2 + \Norm{u(s,y_2)}_p^2}
\end{aligned}
\end{align}
and
\begin{align}
 \label{E:HolderI3}
\begin{aligned}
I_3(t,t',x') =&\int_t^{t'}\ud s\iint_{\R^{2d}}\ud y_1\ud y_2\:f(y_1-y_2)\\
&\times G(t'-s,x'-y_1) \sqrt{\Vip^2 + \Norm{u(s,y_1)}_p^2} \\
&\times G(t'-s,x'-y_2)\sqrt{\Vip^2 + \Norm{u(s,y_2)}_p^2}.
\end{aligned}
\end{align}
Note that when $\Vip\ne 0$, from the moment bounds  in \eqref{E:Mom}, by choosing
\[
\widetilde{\mu}(\ud x) = \sqrt{2}\mu(\ud x) + \Vip\ud x\quad\text{and}\quad\widetilde{J_0}(t,x):=\sqrt{2}J_0(t,x)+\Vip\,,
\]
one can reduce it to the case that $\Vip=0$, i.e., $\rho(0)=0$. Hence, in the following, we only need to consider the case that $\Vip=0$.
We will study  these three increments in three steps.

{\bigskip\noindent\bf Step 1.~}
In this step, we study $I_1$.
We apply the moment bound \eqref{E:Mom} to \eqref{E:HolderI1}, it follows that
\begin{align*}
I_1(t,x,x')\leq C H(t,\gamma_p)&\int_0^t\ud s\iint_{\R^{2d}}\ud y_1\ud y_2\:f(y_1-y_2)\\
&\times \left|G(t-s,x-y_1)-G(t-s,x'-y_1)\right|\\
&\times
\left|G(t-s,x-y_2)-G(t-s,x'-y_2)\right|\\
&\times \iint_{\RR^{2d}} \mu(\ud z_1) \mu(\ud z_2)G(s,y_1-z_1)G(s,y_2-z_2)\,.
\end{align*}
Here we have used the definition of $J_0(t,x)$ and the fact that $H(s,\gamma_p)$ is nondecreasing in $s$, see Lemma 2.6 in \cite{CK15SHE}.
By Lemma \ref{L:GG-xt} and and \eqref{E:GGGG}, for all $\alpha\in (0,1)$,
\begin{align*}
\big|G(t-s,x-y_1)&-G(t-s,x'-y_1)\big|\\
\le&
C\left[G(2(t-s),x-y_1)+G(2(t-s),x'-y_1)\right]
\frac{\left|x-x'\right|^\alpha}{(t-s)^{\alpha/2}},
\end{align*}
and
\begin{align*}
G(s,y_1-z_1)&\left|G(t-s,x-y_1)-G(t-s,x'-y_1)\right|\\
& \le
CG(2s,y_1-z_1)\left[G(2(t-s),x-y_1)+G(2(t-s),x'-y_1)\right]
\frac{\left|x-x'\right|^\alpha}{(t-s)^{\alpha/2}}\\
&=
C \frac{\left|x-x'\right|^\alpha}{(t-s)^{\alpha/2}}
\Bigg[
G(2t,x-z_1)G\left(\frac{2s(t-s)}{t},y_1-z_1-\frac{s}{t}(x-z_1)\right)\\
&\hspace{7em}+
G(2t,x'-z_1)G\left(\frac{2s(t-s)}{t},y_1-z_1-\frac{s}{t}(x'-z_1)\right)
\Bigg].
\end{align*}
A similar bound holds for the expression with respect to $y_2$ and $z_2$. Expanding the product of the two bounds, we will get a sum of four terms, 
\[
I_1(t,x,x')\le \sum_{k=1}^ 4 I_{1,k}(t,x,x'),
\]
where, for example,
\begin{align*}
 I_{1,1}(t,x,x')\le & C |x-x'|^{2\alpha} \iint_{\R^{2d}}\mu(\ud z_1)\mu(\ud z_2) \int_0^t\ud s\iint_{\R^{2d}}
 \ud y_1\ud y_2 \: f(y_1-y_2) \frac{1}{(t-s)^{\alpha}}\\
 &\times G(2t,x-z_1)G\left(\frac{2s(t-s)}{t},y_1-z_1-\frac{s}{t}(x-z_1)\right)\\
 &\times
 G(2t,x'-z_2)G\left(\frac{2s(t-s)}{t},y_2-z_2-\frac{s}{t}(x-z_2)\right),
\end{align*}
and similarly for $I_{1,i}$, $i=2, 3,4$.
Because
\[
\left|\calF [G(t,\cdot+w)](\xi)\right|\le \exp\left(-\frac{t}{2}|\xi|^2\right),\quad \text{for all $w\in\R^d$},
\]
we see that
\begin{align*}
I_{1,1}(t,x,x')\le & C |x-x'|^{2\alpha} \iint_{\R^{2d}}\mu(\ud z_1)\mu(\ud z_2) \int_0^t\ud s\int_{\R^{d}}
 \hat{f}(\ud \xi) \: \frac{1}{(t-s)^{\alpha}}\\
 &\times G(2t,x-z_1)G(2t,x-z_2) \exp\left(-\frac{2s(t-s)}{t}|\xi|^2\right)\\
 =&C |x-x'|^{2\alpha} J_0(2t,x)J_0(2t,x')
 \int_0^t\ud s\int_{\R^{d}}
 \hat{f}(\ud \xi) \: \frac{\exp\left(-\frac{2s(t-s)}{t}|\xi|^2\right)}{(t-s)^{\alpha}}.
\end{align*}
By Lemma \ref{L:shatf} with $g(s)=s^{-1/\alpha}$ and $\beta=|\xi|^2$ ($g$ is nonincreasing),
\begin{align*}
 \int_0^t\ud s\int_{\R^{d}}
 \hat{f}(\ud \xi) \: \frac{\exp\left(-\frac{2s(t-s)}{t}|\xi|^2\right)}{(t-s)^{\alpha}}
 & \le
 2\int_{0}^t\ud s\int_{\R^{d}}
 \hat{f}(\ud \xi) \: \frac{1}{s^{\alpha}}\exp\left(-s|\xi|^2\right)\\
 &\le
 2e^t\int_{0}^t\ud s\int_{\R^{d}}
 \hat{f}(\ud \xi) \: \frac{1}{s^{\alpha}}\exp\left(-s(|\xi|^2+1)\right)\\
 &\le C\int_{\R^{d}}
 \frac{\hat{f}(\ud \xi)}{(1+|\xi|^2)^{1-\alpha}}.
\end{align*}
Hence, 
\[
I_{1,1}(t,x,x')\le
C |x-x'|^{2\alpha} J_0(2t,x)J_0(2t,x')
\int_{\R^{d}}
\frac{\hat{f}(\ud \xi)}{(1+|\xi|^2)^{1-\alpha}}.
\]
One can obtain similar bounds  for all the other three terms. Therefore,
\[
I_{1}(t,x,x')\le
C |x-x'|^{2\alpha} [J_0(2t,x)+J_0(2t,x')]^2
\int_{\R^{d}}
\frac{\hat{f}(\ud \xi)}{(1+|\xi|^2)^{1-\alpha}}.
\]

{\noindent\bigskip\bf Step 2.~}
Now we consider the time increment $I_2$.
By the moment bound \eqref{E:Mom},
\begin{align*}
I_2(t,t',x') \le & C \int_0^t\ud s\iint_{\R^{2d}}\ud y_1\ud y_2\:
\left|G(t-s,x'-y_1)-G(t'-s,x'-y_1)\right|\\
&\times
\left|G(t-s,x'-y_2)-G(t'-s,x'-y_2)\right|
f(y_1-y_2)
J_0(s,y_1)J_0(s,y_2)\\
= & C \iint_{\R^d} \mu(\ud z_1)\mu(\ud z_2)
\int_0^t\ud s\iint_{\R^{2d}}\ud y_1\ud y_2\:
f(y_1-y_2)
\\
&\times
G(s,y_1-z_1)\left|G(t-s,x'-y_1)-G(t'-s,x'-y_1)\right|\\
&\times
G(s,y_2-z_2) \left|G(t-s,x'-y_2)-G(t'-s,x'-y_2)\right|.
\end{align*}
Applying \eqref{E:GG-t}, using the fact that $G(s,y_1-z_1)\le C G(4s,y_1-z_1)$ and then applying \eqref{E:GGGG},
we see that
\begin{align*}
  G(s,y_1-z_1) &|G(t-s,x'-y_1)-G(t'-s,x'-y_1)|\\
  \le& C (t-s)^{-\alpha/2} G(s,y_1-z_1) G(4(t'-s),x'-y_1)  \left(t'-t\right)^{\alpha/2}\\
  \le& C (t-s)^{-\alpha/2} G(4s,y_1-z_1) G(4(t'-s),x'-y_1)  \left(t'-t\right)^{\alpha/2}\\
 \le& C (t-s)^{-\alpha/2} G(4t',x'-z_1)G\left(\frac{4s(t'-s)}{t'},y_1-z_1-\frac{s}{t'}(x'-z_1)\right)  \left(t'-t\right)^{\alpha/2}.
\end{align*}
Therefore,
\begin{align*}
I_2(t,t',x')
\le &C (t'-t)^\alpha \iint_{\R^{2d}}\mu(\ud z_1)\mu(\ud z_2)
G(4t',x'-z_1)G(4t',x'-z_2)\\
&\times
\int_0^t\ud s\: (t-s)^{-\alpha}\iint_{\R^{2d}}\ud y_1\ud y_2\:
f(y_1-y_2)
\\
  &\times G\left(\frac{4s(t'-s)}{t'},y_1-z_1-\frac{s}{t'}(x'-z_1)\right)G\left(\frac{4s(t'-s)}{t'},y_2-z_2-\frac{s}{t'}(x'-z_2)\right)\\
  \le &
  C (t'-t)^\alpha \iint_{\R^{2d}}\mu(\ud z_1)\mu(\ud z_2)
G(4t',x'-z_1)G(4t',x'-z_2)\\
&\times
\int_0^t\ud s\: (t-s)^{-\alpha}\int_{\R^d} \hat{f}(\ud \xi) \exp\left(-\frac{4s(t-s)}{t}|\xi|^2\right)\\
=& C(t'-t)^\alpha J_0^2(4t',x')
\int_{\R^d} \hat{f}(\ud \xi) \int_0^t\ud s\: (t-s)^{-\alpha} \exp\left(-\frac{4s(t-s)}{t}|\xi|^2\right),
\end{align*}
where in the second inequality above we have used the fact that
\begin{equation*}
\exp \left(-\frac{4s (t'-s)}{t'} |\xi|^2 \right) \leq \exp \left( -\frac{4s (t-s)}{t} |\xi|^2\right)
\end{equation*}
since $t'\geq t$.
By the same arguments as those in Step 1,
\begin{align*}
 I_{2}(t,t',x')&\le
 C(t'-t)^\alpha J_0^2(4t',x') \int_{\R^{d}}\hat{f}(\ud \xi) \int_0^t\ud s\:
 s^{-\alpha} \exp\left(-2s|\xi|^2\right)\\
 &\le
 C(t'-t)^\alpha J_0^2(4t',x') \int_{\R^{d}}\hat{f}(\ud \xi) \int_0^t\ud s\:
 s^{-\alpha} \exp\left(-2s\left(1+|\xi|^2\right)\right)\\
 &\le
 C(t'-t)^\alpha J_0^2(4t',x') \int_{\R^{d}}\frac{\hat{f}(\ud \xi)}{(1+|\xi|^2)^{1-\alpha}}\,.
\end{align*}

{\noindent\bigskip\bf Step 3.~}
Now we consider the time increment $I_3$.
By the moment bound \eqref{E:Mom} and \eqref{E:GGGG},
\begin{align*}
I_3(t,t',x') \le & C \int_t^{t'}\ud s\iint_{\R^{2d}}\ud y_1\ud y_2\:
G(t'-s,x'-y_1) G(t'-s,x'-y_2) f(y_1-y_2)J_0(s,y_1)J_0(s,y_2)\\
= &C \iint_{\R^d} \mu(\ud z_1)\mu(\ud z_2)
\int_t^{t'}\ud s\iint_{\R^{2d}}\ud y_1\ud y_2\:
f(y_1-y_2)
\\
&\times
G(s,y_1-z_1)G(t'-s,x'-y_1)G(s,y_2-z_2) G(t'-s,x'-y_2)\\
= &
C \iint_{\R^d} \mu(\ud z_1)\mu(\ud z_2) G(t',x'-z_1)G(t',x'-z_2)
\int_t^{t'}\ud s\iint_{\R^{2d}}\ud y_1\ud y_2\:
f(y_1-y_2)\\
& \times G\left(\frac{s(t'-s)}{t'}, y_1-z_1-\frac{s}{t'}(x'-z_1)\right)
G\left(\frac{s(t'-s)}{t'}, y_2-z_2-\frac{s}{t'}(x'-z_2)\right)\\
\le &
C \iint_{\R^d} \mu(\ud z_1)\mu(\ud z_2) G(t',x'-z_1)G(t',x'-z_2)\int_t^{t'}\ud s\int_{\R^{d}}\hat{f}(\ud \xi)
\\
& \times \exp\left(-\frac{s(t'-s)}{t'}|\xi|^2\right)\\
=&
C J_0^2(t',x')
\int_t^{t'}\ud s\int_{\R^{d}}\hat{f}(\ud \xi)
\exp\left(-\frac{s(t'-s)}{t'}|\xi|^2\right).
\end{align*}
Notice that for any $\alpha\in(0,1]$,
\begin{align*}
 \int_t^{t'}\ud s \exp\left(-\frac{s(t'-s)}{t'}|\xi|^2\right)&\le
 \int_t^{t'}\ud s \exp\left(-\frac{t(t'-s)}{t'}|\xi|^2\right)\\
 &\le \int_t^{t'}\ud s \exp\left(-\frac{t(t'-s)}{t'}\left(1+|\xi|^2\right) + \frac{t(t'-t)}{t'}\right)\\
 &\le C \int_t^{t'}\ud s \exp\left(-\frac{t(t'-s)}{t'}\left(1+|\xi|^2\right) \right)\\
 &=C \frac{1-\exp\left(-\frac{t(t'-t)}{t'}\left(1+|\xi|^2\right)\right)}{1+|\xi|^2}\\
 &\le  C \frac{\left(\frac{t(t'-t)}{t'}\left(1+|\xi|^2\right)\right)^\alpha }{1+|\xi|^2}
 =  \frac{C (t'-t)^\alpha}{\left(1+|\xi|^2\right)^{1-\alpha}}.
\end{align*}
Therefore,
\begin{align}\label{E_:HolderI3}
I_3(t,t',x')\le C (t'-t)^{\alpha} J_0^2(t',x')
\int_{\R^{d}}\frac{\hat{f}(\ud \xi)}{(1+|\xi|^2)^{1-\alpha}}.
\end{align}
Combining these three cases and applying the Kolmogorov's continuity theorem, we have completed the proof of Theorem \ref{T:Holder}.
\end{proof}

\section{One approximation result (Proof of Theorem \ref{T:Approx})}
\label{S:Approx}

\begin{proof}[Proof of Theorem \ref{T:Approx}]
{\bf\noindent (1)~}
By Theorem \ref{T:ExUni}, we see that both $u$ and $u_\epsilon$ are well-defined random field solutions to \eqref{E:SHE}.
Let $v_\epsilon(t,x) = u_\epsilon(t,x)-u(t,x)$
and $\tilde{\rho}(v_\epsilon) := \rho(v_\epsilon+u) - \rho(u)$.
It is clear that $\tilde{\rho}$ is a Lipschitz continuous function satisfying $\tilde{\rho}(0)=0$ {and $\Lip_{\tilde{\rho}}=\Lip_\rho$}.
Then $v_\epsilon$ is a solution to \eqref{E:SHE} with $\rho$ replaced by $\tilde{\rho}$
starting from $\mu_\epsilon:=\left((\mu \: \psi_\epsilon)*G(\epsilon,\cdot)\right)(x)-\mu$.
{Denote
\[
J_\epsilon(t,x)=(\mu_\epsilon*G(t,\cdot))(x)
\quad\text{and}\quad
g_\epsilon(t,x,x')=\left|\E\left[v_\epsilon(t,x)v_\epsilon(t,x')\right]
\right|.
\]
Then $g$ satisfies the following integral equation
\begin{align*}
g_\epsilon(t,x,x')\le & \quad |J_\epsilon(t,x)J_\epsilon(t,x')|\\
& + 
\Lip_\rho^2 \int_0^t\ud s\iint_{\R^{2d}}
G(t-s,x-y)G(t-s,x'-y')f(y-y') g(s,y,y')\ud y\ud y'.
\end{align*}
By Lemma \ref{L:TwoPoint}, we see that
\begin{align*}
g_\epsilon(t,x,x')\le & 
\quad |J_\epsilon(t,x)J_\epsilon(t,x')| \\
&+ 
C \int_0^t \ud s\iint_{\R^{2d}} G(t-s,x-y)G(t-s,x'-y')f(y-y') |J_\epsilon(s,y')J_\epsilon(s,y')| \ud y\ud y'.
\end{align*}
Notice that
\begin{align*}
\left|J_\epsilon(t,x)\right|
&\le
\left[
\left(|\mu\:\psi_\epsilon|*\left|G(t+\epsilon,\cdot)-G(t,\cdot)\right|\right)(x)
+
\left(|\mu\psi_\epsilon-\mu|*G(t,\cdot)\right)(x)
\right]\\
&\le \left[
\left(|\mu|*\left|G(t+\epsilon,\cdot)-G(t,\cdot)\right|\right)(x)
+
\left(|\mu\psi_\epsilon-\mu|*G(t,\cdot)\right)(x)
\right].
\end{align*}
Because for any $\epsilon\in (0,t)$, $\left|G(t+\epsilon,x)-G(t,x)\right|\le C G(2t,x)$ for all $x\in\R^d$ uniformly in $\epsilon$,
and because $|\mu\psi_\epsilon-\mu|\le |\mu|$,
we see that 
\[
\left|J_\epsilon(t,x)\right|
\le C \left(|\mu|*G(2t,\cdot)\right)(x) +
\left(|\mu|*G(t,\cdot)\right)(x).
\]
Then one can apply the dominated convergence theorem twice to conclude that
\[
\lim_{\epsilon\rightarrow 0} g_\epsilon(t,x,x') = 0,
\]
which completes the proof of part (1) of Theorem \ref{T:Approx}.}

{\bigskip\bf\noindent (2)~}
Since $u$ and $u_\epsilon$ start from the same initial data,
we see that 
\begin{align*}
\E \left[\left(u(t,x)-u_\epsilon (t,x) \right)^2\right] \leq& \quad 2 
\E \left(\int_0^t \int_{\RR^d} G(t-s,x-y)\left[\rho(u(s,y))-\rho(u_\epsilon (s,y))\right] M(\ud s,\ud y) \right)^2\\
&+2 \E \left( \int_0^t \int_{\RR^d} G(t-s,x-y) \rho(u_\epsilon (s,y)) \left( M(\ud s,\ud y)-M^{\epsilon}(\ud s,\ud y) \right) \right)^2\\
:=&I_1(t,\epsilon)+I_2(t,\epsilon)\,.
\end{align*}
For $I_1(t,\epsilon)$, using the Lipschitz condition on $\rho$ and since the initial condition is bounded, we obtain that
\begin{align*}
I_1(t,\epsilon)
&\leq C 
\int_0^t \int_{\RR^d} G(2(t-s),y) f(y)\sup_{z\in \RR^d} 
\E \left[\left(u(s,z)-u_\epsilon (s,z) \right)^2
\right]\ud y \ud s \\
&=C \int_0^t \ud s \: k(2(t-s))\sup_{z\in \RR^d} 
\E \left[\left(u(s,z)-u_\epsilon (s,z) \right)^2
\right]\,,
\end{align*}
where $k(\cdot)$ function is defined in \eqref{E:k}.
As for $I_2(t,\epsilon)$, we have that
\begin{align*}
&\E \left(\int_0^t \int_{\RR^d} G(t-s, x-y)\rho(u_{\epsilon}(s,y)) M(\ud s, \ud y) \int_0^t \int_{\RR^d} G(t-s, x-y) \rho(u_{\epsilon}(s,y))M^{\epsilon}(\ud s, \ud y) \right)\\
=&\E \bigg(\int_0^t \int_{\RR^d} G(t-s, x-y)\rho(u_{\epsilon}(s,y)) M(\ud s, \ud y)\\
&\quad \times \int_0^t \iint_{\RR^{2d}} G(t-s, x-y) \rho(u_{\epsilon}(s,y)) \phi_{\epsilon}(y-z)M(\ud s, \ud z) \ud y\bigg)\\
=&\E \bigg(\int_0^t \int_{\RR^d} G(t-s, x-y)\rho(u_{\epsilon}(s,y)) M(\ud s, \ud y)\\
&\quad \times \int_0^t \int_{\RR^d} \left(\int_{\RR^d} G(t-s, x-y) \rho(u_{\epsilon}(s,y)) \phi_{\epsilon}(y-z)\ud y\right)M(\ud s, \ud z) \bigg)\\
=&\E \int_0^t \ud s \iint_{\RR^{2d}}\ud y_1\ud y_2\: 
G(t-s, x-y_1)\rho(u_{\epsilon}(s,y_1)) G(t-s, x-y_2) \rho(u_{\epsilon}(s,y_2))\\
&\quad \times \int_{\R^d} \ud z \: \phi_{\epsilon}(y_2-z)  f(y_1-z)\\
= &\E \int_0^t\ud s \iint_{\RR^{2d}}
\ud y_1\ud y_2\: G(t-s, x-y_1)\rho(u_{\epsilon}(s,y_1)) G(t-s, x-y_2) \rho(u_{\epsilon}(s,y_2)) f^{\epsilon}(y_1-y_2)\,,
\end{align*}
where we have applied the stochastic Fubini theorem and  
$f^{\epsilon}(x):=\left(\phi_{\epsilon}* f\right)(x)$.
In the same way, we can get 
\begin{align*}
&\E \left[\left( \int_0^t \int_{\RR^d} G(t-s, x-y) \rho(u_{\epsilon}(s,y))M^{\epsilon}(\ud s, \ud y)\right)^2\right]\\
=& \E \int_0^t\ud s \iint_{\RR^{2d}}
\ud y_1\ud y_2\:
G(t-s, x-y_1)\rho(u_{\epsilon}(s,y_1)) G(t-s, x-y_2) \rho(u_{\epsilon}(s,y_2)) f^{\epsilon, \epsilon}(y_1-y_2),
\end{align*}
where $f^{\epsilon, \epsilon}(x):=\left(\phi_{\epsilon}*\phi_{\epsilon}*f\right)(x)$. 
Since $\phi$ is nonnegative definite, both $f^\epsilon$ and $f^{\epsilon,\epsilon}$ are
well-defined kernel functions.
From the above calculation, we see that the spatial correlation function for the noise $M^\epsilon$
is  $f^{\epsilon, \epsilon}(x)$. 
Notice that
\begin{align*}
k_\epsilon(t)&:=\int_{\R^d}f^{\epsilon,\epsilon}(z) G(t,z)\ud z
=(2\pi)^{-d}\int_{\R^d}\hat{f}(\ud \xi) \hat{\phi}_\epsilon(\xi)^2 \exp\left(-\frac{t|\xi|^2}{2}\right)\\
&\le
(2\pi)^{-d}\int_{\R^d}\hat{f}(\ud \xi) \exp\left(-\frac{t|\xi|^2}{2}\right)= k(t),
\end{align*}
for all $\epsilon>0$, where we have used the fact that 
\begin{align}
\label{E:hatphie}
\hat{\phi}_\epsilon(\xi)^2=
\hat{\phi}(\epsilon\xi)^2
=\left|\int_{\R^d}e^{-i\epsilon\InPrd{\xi,x}}\phi(x)\ud x\right|^2
\le \left(\int_{\R^d}\phi(x)\ud x\right)^2 =1.
\end{align}
Therefore, by Theorem \ref{T:Mom}, 
\[
\sup_{\epsilon>0}
\sup_{(s,x)\in[0,t]\times\R^d}\Norm{u_\epsilon (s,x)}_2\le 
\sup_{(s,x)\in[0,t]\times\R^d}\Norm{u(s,x)}_2<\infty.
\]
Thus, 
\begin{align*}
I_2(t,\epsilon)\leq& C \int_0^t \iint_{\RR^{2d}} G(t-s,x-y)G(t-s,x-z)
\\
&\times \left|f(y-z)-2f^{\epsilon}(y-z)+f^{\epsilon, \epsilon}(y-z)\right|\ud y \ud z \ud s\\
{=} &C \int_0^t \int_{\RR^d} G(2(t-s),y)|f(y)-2f^{\epsilon}(y)+f^{\epsilon, \epsilon}(y)|\ud y \ud s\\
\leq & C \int_{0}^t \int_{\RR^d}G(2(t-s),y)|f(y)-f^{\epsilon}(y)|\ud y \ud s \\
&+ C \int_{0}^t \int_{\RR^d}G(2(t-s),y)|f(y)-f^{\epsilon, \epsilon}(y)|\ud y \ud s\\
=&C \int_{\RR^d}g(2t,|y|)|f(y)-f^{\epsilon}(y)|\ud y 
+
C \int_{\RR^d}g(2t,|y|)|f(y)-f^{\epsilon, \epsilon}(y)|\ud y \,,
\end{align*}
where the function $g(t,|x|)$ is defined in Lemma \ref{L:gtx}.
Because $f$ is nonnegative and 
\[
\int_{\RR^d}g(4t,|y|)f(y)\ud y 
=\int_{0}^t \int_{\RR^d}G(4s,y)f(y)\ud y \ud s 
= \int_0^t k(4s) \ud s \le h_1(4t)<\infty, 
\]
part (2) of Lemma \ref{L: h appro} implies that
$\lim_{\epsilon \to 0} I_2(t,\epsilon)=0$.
Hence an application of Gronwall's lemma 
shows that 
\[
\lim_{\epsilon\rightarrow 0_+}\sup_{z\in \RR^d} 
\E \left[\left(u(t,z)-u_\epsilon (t,z) \right)^2
\right]=0, 
\]
which completes the proof of Theorem \ref{T:Approx}. 
\end{proof}

\section{A weak limit (Proof of Theorem \ref{T:WeakSol})}
\label{S:WeakSol}

\begin{proof}[Proof of Theorem \ref{T:WeakSol}]
Fix $\phi\in C_c(\R^d)$.
Let $I(t,x)$ be the stochastic integral part of \eqref{E:mild}.
We only need to prove that
\[
\lim_{t\rightarrow 0_+} \int_{\R^d} \ud x \: I(t,x) \phi(x) = 0 \quad\text{in $L^2(\Omega)$}.
\]
Denote $L(t):=\int_\R I(t,x) \phi(x)\ud x$.
By the stochastic Fubini theorem (see \cite[Theorem 2.6, p. 296]{Walsh}),
\[
L(t) =  \int_0^t \int_{\R^d} \left(\int_{\R^d} \ud x\;  G(t-s,x-y) \phi(x)\right)
\rho(u(s,y)) M(\ud s,\ud y).
\]
Hence, by \Itos isometry and the linear growth condition on $\rho$,
\begin{align*}
\E\left[L(t)^2\right]\le 
\Lip_\rho^2 \int_0^t \ud s & \iint_{\R^{2d}}  \ud y_1 \ud y_2 \: f(y_1-y_2) \iint_{\R^{2d}} \ud x_1\ud x_2 \: \\
&\times \sqrt{\Vip^2+\Norm{u(s,y_1)}_2^2} G(t-s,x_1-y_1)
|\phi(x_1)|\\
&\times \sqrt{\Vip^2+\Norm{u(s,y_2)}_2^2} G(t-s,x_2-y_2)|\phi(x_2)|\,,
\end{align*}
{where $\Vip=|\rho(0)|/\Lip_\rho$.}
Then by the moment bounds \eqref{E:Mom},
\begin{align*}
\E\left[L(t)^2\right]\le
C\int_0^t \ud s & \iint_{\R^{2d}}  \ud y_1 \ud y_2 \: f(y_1-y_2) \iint_{\R^{2d}} \ud x_1\ud x_2 \: \\
&\times \sqrt{1+J_0^2(s,y_1)}\: G(t-s,x_1-y_1)
|\phi(x_1)|\\
&\times \sqrt{1+J_0^2(s,y_2)}\: G(t-s,x_2-y_2)
|\phi(x_2)|\,.
\end{align*}
Assume that $t\le 1/2$.
By considering $\mu_*(\ud x)=\mu(\ud x) + \ud x$ and setting $J_*(t,x)=\left(\mu_**G(t,\cdot)\right)(x)$, we see that
\[
1+J_0^2(t,x) \le J_*^2(t,x).
\]
Because for some constant $C>0$, $|\phi(x)|\le C G(1,x)$ for all $x\in\R^d$,
we can apply the semigroup property to get
\begin{align*}
\E\left[L(t)^2\right]\le
C\int_0^t \ud s \iint_{\R^{2d}}  \ud y_1 \ud y_2 \: f(y_1-y_2) \:
       & J_*(s,y_1) G(t+1-s,y_1)\\
\times & J_*(s,y_2) G(t+1-s,y_2).
\end{align*}
Then by a similar argument as those in the proof of Lemma \ref{L:IntIneq},
we see that
\begin{align*}
\E\left[L(t)^2\right]\le & C J_*^2(t+1,x)\int_{\R^d}\hat{f}(\ud \xi)
\int_0^t \ud s\: \exp\left(-\frac{s(t+1-s)}{t+1}|\xi|^2\right)\\
\le &
C J_*^2(t+1,x)\int_{\R^d}\hat{f}(\ud \xi)
\int_0^t \ud s\: \exp\left(-\frac{s}{2}|\xi|^2\right),
\end{align*}
where the last inequality is due to $t\le 1/2$.
Since the above double integral is finite for $t=1/2$, by the dominated convergence theorem, we see that
this double integral goes to zero as $t\rightarrow 0$. This completes the proof of Theorem \ref{T:WeakSol}.
\end{proof}

\section{Weak comparison principle (Proof of Theorem \ref{T:WComp})}
\label{S:WComp}

\begin{proof}[Proof of Theorem \ref{T:WComp}]
We begin by noting that \eqref{E: W comp path} is 
an immediate consequence of \eqref{E: W comp point}. 
So we only need to prove \eqref{E: W comp point}. 
The proof consists of four steps.
Both the setup and Steps 1 \& 4 of the proof 
follow the same lines as those in the proof of Theorem 1.1 in \cite{CK14Comp}
with some minor changes. The main difference lies in Step 2 and Step 3.

Now we set up some notation in the proof. We view the $G(t,x)$ as an operator, denoted by ${\bf G}(t)$, as follows:
\begin{equation}
{\bf G}(t) f(x) := (G(t,\cdot)* f)(x)\,.
\end{equation}
Let $\bf I $ be the identity operator: ${\bf I} f(x):=(\delta * f)(x) = f(x)$. Set
\begin{equation}
\Delta ^{\epsilon} = \frac{{\bf G}(\epsilon) - {\bf I}}{\epsilon}\,.
\end{equation}
Let
\begin{equation}
G^{\epsilon}(t) = \exp (t\Delta^{\epsilon})= e^{-\frac{t}{\epsilon}} \sum_{n=0}^{\infty} \frac{(t/\epsilon)^n}{n!} {\bf G}(n\epsilon) := e^{-t/\epsilon} {\bf I} + {\bf R}^{\epsilon}(t)\,,
\end{equation}
where the operator ${\bf R}^{\epsilon}(t)$ has a density, denoted by $R^{\epsilon}(t,x)$, which is equal to
\begin{equation}\label{E:R epsilon}
R^{\epsilon}(t,x)=e^{-t/\epsilon} \sum_{n=1}^{\infty} \frac{(t/\epsilon)^n}{n!} G(n\epsilon,x)\,.
\end{equation}
For $\epsilon>0$ and $x \in \RR^d$, denote
\begin{equation}
M_x^{\epsilon}(t)=\int_0^t \int_{\RR^d} G(\epsilon,x-y) M(\ud s ,\ud y )\,,\quad \text{for} \ t \geq 0\,.
\end{equation}
Denote $\dot{M}_x^{\epsilon}(t) = \frac{\partial}{\partial t} M_x^{\epsilon}(t)$.
Then the quadratic variation of $\ud M_x^{\epsilon}(t)$ is
\begin{align*}
\ud \langle M_x^{\epsilon}(t)\rangle =& \iint_{\RR^{2d}} G(\epsilon,x-y_1) G(\epsilon,x-y_2) f(y_1-y_2)\ud y _1 \ud y _2 \ud t \\
=& \int_{\RR^d} e^{-\epsilon |\xi|^2} \hat{f}(\ud\xi) \ud t\,.
\end{align*}
Consider the following stochastic partial differential equation
\begin{equation}\label{E:approSHE}
 \left\{
\begin{array}{ll}
\displaystyle      \frac{\partial }{\partial t} u_{\epsilon}(t,x) = \Delta^{\epsilon} u_{\epsilon}(t,x) + \rho(u_{\epsilon}(t,x)) \dot{M}_{x}^{\epsilon}(t)\,, & t >0\,, x \in \RR^d\,,
\\[1em]
\displaystyle      u_{\epsilon}(0,x) = (\mu * G(\epsilon, \cdot))(x)\,, & x \in \RR^d \,. \\
\end{array}
\right.
\end{equation}
Since $\rho$ is Lipschitz continuous and $\Delta^{\epsilon}$ is a bounded operator,
\eqref{E:approSHE} has a unique strong solution
\begin{equation}
u_{\epsilon}(t,x)=
\left(\mu * G(\epsilon,\cdot)\right)(x) + \int_0^t \ud s  \Delta^{\epsilon} u_{\epsilon}(s,x)
+ \int_0^t \rho(u_{\epsilon}(s,x)) \ud M_x^{\epsilon}(s)\,.
\end{equation}
We proceed the proof in three steps. We fix $t>0$ and assume that $\epsilon\in (0,1\wedge t)$.

{\bigskip\noindent\bf Step 1}: Let $u_{\epsilon,1}(t,x)$ and $u_{\epsilon,2}(t,x)$ be the solutions to \eqref{E:approSHE}
with initial data $\mu_1$ and $\mu_2$, respectively.
Following exactly the same lines as those in Step 2 of the proof in \cite{CK14Comp}, we
can prove that $v_{\epsilon}(t,x):=u_{\epsilon,2}(t,x)-u_{\epsilon,1}(t,x)$ satisfies
\begin{equation}\label{eq: W-Comparison epsilon}
\bbP\Big(v_{\epsilon}(t,x)\geq 0, \ \text{for every } t > 0\ \text{and}\ x \in \RR^d\: \Big)=1\,.
\end{equation}
We will not repeat the proof here.

{\bigskip\noindent\bf Step 2}.
In this step we consider the case that 
the initial condition is bounded nonnegative function, 
i.e., $\mu(\ud x)=g(x)\ud x$ where $g(x)\ge 0$ and $g\in L^\infty(\R^d)$.
We also assume that the covariance function $f$ satisfies condition \eqref{E:DalangAlpha} with $\alpha=1$, i.e., 
\[
 \int_{\RR^d}\hat{f}(\ud\xi)< \infty.
\]
Let $u_\epsilon(t,x)$ be the solution to \eqref{E:SHE} starting from $u_{\epsilon}(0,x):= \left(\mu*G(\epsilon,\cdot)\right)(x)$.
The aim of this step is to prove 
\begin{equation}\label{eq: appro sol L2 conv}
\lim_{\epsilon \to 0} \sup_{x \in \RR^d} \|u_{\epsilon}(t,x)-u(t,x)\|_2^2 = 0\,, \quad\text{for all $t>0$}\,.
\end{equation}
Notice that $u_{\epsilon}(t,x)$ can be written in the following mild form using the kernel of ${\bf G}^{\epsilon}(t)$:
\begin{align*}
u_{\epsilon}(t,x)
& = \left(u_{\epsilon}(0,\cdot)* G^{\epsilon}(t,\cdot)\right)(x) + \int_0^t e^{-(t-s)/\epsilon} \rho(u_{\epsilon}(s,x))\ud M_x^{\epsilon}(s)\\
 &\quad \quad+ \int_0^t \int_{\RR^d} R^{\epsilon}(t-s,x-y) \rho(u_{\epsilon}(s,y))\ud M_y^{\epsilon}(s)\\
 &= \left(u_{\epsilon}(0,\cdot)* G^{\epsilon}(t,\cdot)\right)(x) + \int_0^t e^{-(t-s)/\epsilon} \rho(u_{\epsilon}(s,x))\ud M_x^{\epsilon}(s)\\
 & \quad \quad+ \int_0^t \int_{\RR^d} \left( \int_{\RR^d} R^{\epsilon}(t-s,x-z) \rho(u_{\epsilon}(s,z)) G(\epsilon,y-z) \ud z \right)  M(\ud s ,\ud y ).
\end{align*}
The boundedness of the initial data implies that 
\begin{equation}\label{E:BddMom}
A_t:= \sup_{\epsilon \in (0,1]} \sup_{s \in [0,t]} \sup_{x \in \RR^d} \|u_{\epsilon}(s,x)\|^2_2 \vee \|u(s,x)\|^2_2 < \infty\,.
\end{equation}
By the assumption on $\rho$, we have the following estimate:
\begin{align*}
\|u_{\epsilon}(t,x)- u(t,x)\|_2^2 \leq C \sum_{n=1}^6 I_n(t,x;\epsilon)\,,
\end{align*}
where
\begin{gather*}
 I_1(t,x;\epsilon) := \left( \left(u_{\epsilon}(0,\cdot)* G^{\epsilon}(t,\cdot)\right)(x)-u(0,\cdot)*G(t,\cdot)(x) \right)^2\,,\\
 I_2(t,x;\epsilon):= \int_0^t \ud s  \int_{\RR^d} e^{-\epsilon|\xi|^2} e^{-\frac{2(t-s)}{\epsilon}} \hat{f}(\ud\xi)\,,\\
 I_3(t,x;\epsilon):= \left\|\int_0^t \int_{\RR^d} \int_{\RR^d} R^{\epsilon}(t-s, x-z) \left[\rho(u_\epsilon (s,z)) - \rho(u(s,z))\right] G(\epsilon,y-z)\ud z M(\ud s ,\ud y ) \right\|_2^2,\\
 I_4(t,x;\epsilon):= \left\|\int_0^t \int_{\RR^d} \int_{\RR^d} R^{\epsilon}(t-s, x-z) \left[\rho(u(s,z)) - \rho(u(s,y))\right] G(\epsilon,y-z)\ud z M(\ud s ,\ud y ) \right\|_2^2,\\
I_5(t,x;\epsilon):= \left\|\int_0^t \int_{\RR^d} \int_{\RR^d} \left( R^{\epsilon}(t-s, x-z)-G(t-s,x-z)\right) \rho(u(s,y)) G(\epsilon,y-z) \ud z M(\ud s ,\ud y ) \right\|_2^2,
\end{gather*}
and
\[
I_6(t,x;\epsilon):= \left\|\int_0^t \int_{\RR^d} \int_{\RR^d} \left( G(t-s,x-y)-G(t-s,x-z)\right) \rho(u(s,y)) G(\epsilon,y-z) \ud z M(\ud s ,\ud y ) \right\|_2^2.
\]
Since $\mu$ has a bounded density, we see that
\begin{align*}
I_1(t,x; \epsilon)\le &C \left| \left(u_\epsilon(0,\cdot)* G^{\epsilon}(t,\cdot)\right)(x) - \left(u(0,\cdot)* G(t,\cdot)\right)(x) \right|\\
\leq &C \left(u_{\epsilon}(0,\cdot)* |G^{\epsilon}(t,\cdot)-G(t,\cdot)|\right)(x)
+
C \left(u(0,\cdot)*|G(t+\epsilon, \cdot)-G(t,\cdot)|\right)(x)\\
\leq& C \left(e^{-t/\epsilon}+ \int_{\RR^d} |R^{\epsilon}(t,y)-G(t,y)| dy + \int_{\RR^d} |G(t+\epsilon,y)-G(t,y)| dy \right).
\end{align*}
Then by Lemma \ref{lem:8.2} and the fact that $\log(1+x)\le \sqrt{x}$, we see that
\begin{align}
\sup_{x\in\R^d}  \sup_{s \in (0,t]} I_1(s,x; \epsilon) \le C
\left(e^{-t/\epsilon} + \sqrt{\epsilon/t}\right).
\end{align}
As for $I_2$, 
we see that 
\begin{align*}
I_2(t,x; \epsilon) &= \int_{\RR^d} e^{-\epsilon|\xi|^2} \frac{\epsilon}{2} (1-e^{-2t/\epsilon})\hat{f}(\ud\xi)\leq 
\frac{\epsilon}{2} \int_{\R^d} \hat{f}(\ud \xi)\le C\: \epsilon\,,
\end{align*}
which implies that
\begin{align}
\sup_{x\in\R^d}\sup_{s\in (0,t]} I_2(s,x; \epsilon) \le  C\: \epsilon\,.
\end{align}

The term $I_3$ will contribute to the recursion. By \eqref{E:BddMom},
\begin{align*}
I_3(t,x; \epsilon)&\leq \E \Bigg[
\int_0^t\ud s \iint_{\RR^{2d}}\ud y_1 \ud y_2\: f(y_1-y_2) \\
&\qquad\times\int_{\RR^d} \ud z_1\: R^{\epsilon}(t-s, x-z_1) \left[\rho(u_{\epsilon}(s,z_1))-\rho(u(s,z_1))\right]G(\epsilon,y_1-z_1) \\
&\qquad\times\int_{\RR^d} \ud z_2\: R^{\epsilon}(t-s, x-z_2) \left[\rho(u_{\epsilon}(s,z_2))-\rho(u(s,z_2))\right]G(\epsilon,y_2-z_2)\Bigg]
\\
&\leq C \int_0^t \ud s
\: \sup_{z\in \RR^d} \|u_{\epsilon}(s,z) - u(s,z)\|_2^2 
\iint_{\RR^{2d}} \ud y_1 \ud y_2\: f(y_1-y_2) \\
&\qquad \times \left(R^{\epsilon}(t-s, \cdot)* G(\epsilon,\cdot)\right)(x-y_1) \:
\left(R^{\epsilon}(t-s, \cdot)* G(\epsilon,\cdot)\right)(x-y_2) \\
& \leq C \int_0^t \ud s  \:\sup_{z\in \RR^d} \|u_{\epsilon}(s,z)-u(s,z)\|_2^2\,,
\end{align*}
where in the last line we used Lemma \ref{lem:mu inequality}.
As for $I_4$,
\begin{align*}
 I_4(t,x;\epsilon) &= \E \Bigg[ \int_0^t\ud s \iint_{\RR^{2d}}\ud y_1\ud y_2\: f(y_1-y_2)\\
 &\qquad\times
 \left(\int_{\RR^d} \ud z_1 \: R^{\epsilon}(t-s, x-z_1) \left[ \rho(u(s,z_1)) -\rho(u(s,y_1))\right] G(\epsilon,y_1-z_1) \right) \\
 &\qquad \times
 \left(\int_{\RR^d} \ud z_2 \: R^{\epsilon} (t-s, x-z_2) \left[\rho(u(s,z_2)) - \rho(u(s,y_2)) \right] G(\epsilon,y_2-z_2) \right)
 \Bigg]\\
 & \leq C \int_0^t\ud s \iint_{\RR^{2d}}\ud y_1\ud y_2\: f(y_1-y_2) \iint_{\RR^{2d}}\ud z_1\ud z_2 \\
 & \qquad \times R^{\epsilon}(t-s, x-z_1) \Norm{u(s,z_1)-u(s,y_1)}_2 G(\epsilon,y_1-z_1)\\
 & \qquad \times R^{\epsilon}(t-s, x-z_2) \Norm{u(s,z_2)-u(s,y_2)}_2 G(\epsilon,y_2-z_2).
\end{align*}
Then by the H\"older continuity of $u$ (see the proof of Theorem \ref{T:Holder}), we have that
\begin{align*}
I_4(t,x;\epsilon)
 & \leq C \int_0^t \ud s \iint_{\R^{2d}}\ud y_1\ud y_2 \: f(y_1-y_2) \iint_{\RR^{2d}} \ud z_1\ud z_2\\
 &\qquad\times R^{\epsilon}(t-s, x-z_1) |z_1-y_1|G(\epsilon,y_1-z_1)\\
 &\qquad\times  R^{\epsilon}(t-s, x-z_2)  |z_2-y_2|G(\epsilon,y_2-z_2)  \\
 & \leq C \epsilon \int_0^t\ud s \iint_{\RR^{2d}} \ud y_1\ud y_2\: f(y_1-y_2)
 \iint_{\R^{2d}}\ud z_1\ud z_2 \\
 &\qquad \times R^{\epsilon}(t-s, x-z_1)G(2\epsilon,y_1-z_1)   R^{\epsilon}(t-s, x-z_2) G(2\epsilon,y_2-z_2)\\
 &\leq C \epsilon \,,  
\end{align*}    
where {the last inequality is due to Lemma \ref{lem:mu inequality} and} the second inequality is due to{ the following inequality with $\alpha=1$:
\begin{equation}\label{E:aaGG}
|z_1-y_1|^\alpha |z_2-y_2|^\alpha G(\epsilon,y_1-z_1)G(\epsilon,y_2-z_2)
\leq C \epsilon^\alpha G(2\epsilon,y_1-z_1) G(2\epsilon,y_2-z_2) 
\,,
\end{equation}
for all $\alpha\in(0,1]$.}
Hence,  
\begin{align}
\sup_{x\in\R^d}\sup_{s\in[0,t]}I_4(s,x;\epsilon)\leq C \epsilon\,.
\end{align}
Now let's consider $I_5$,
\begin{align*}
 I_5(t,x;\epsilon) &= \E \Bigg[\int_0^t \ud s \iint_{\RR^{2d}}\ud y_1 \ud y_2\: f(y_1-y_2)\\
 &\qquad \times \left(\int_{\RR^d} \ud z_1 \left( R^{\epsilon}(t-s, x-z_1) - G(t-s, x-z_1) \right)\rho(u(s,y_1)) G(\epsilon,y_1-z_1) \right) \\
 &\qquad \times \left(\int_{\RR^d} \ud z_2 \left( R^{\epsilon} (t-s, x-z_2)- G(t-s, x-z_2) \right)\rho(u(s,z_2)) G(\epsilon,y_2-z_2) \right) \Bigg]\\
 &\leq C \int_0^t\ud s \iint_{\RR^{2d}} \ud y_1 \ud y_2 \:f(y_1-y_2) \iint_{\R^{2d}}\ud z_1\ud z_2\\
 & \qquad\times \left|R^{\epsilon}(t-s, x-z_1) - G(t-s, x-z_1) \right|G(\epsilon,y_1-z_1) \\
 &\qquad \times \left|R^{\epsilon}(t-s, x-z_2) - G(t-s, x-z_2) \right|  G(\epsilon,y_2-z_2) \\
 &\leq C \int_0^t\ud s \iint_{\R^{2d}}\ud z_1\ud z_2  \left|R^{\epsilon}(s, z_1) - G(s, z_1) \right| \: \left|R^{\epsilon}(s, z_2) - G(s, z_2) \right|\\
 & \qquad\times \iint_{\RR^{2d}} \ud y_1 \ud y_2 \:f(y_1-y_2)G(\epsilon,y_1-x+z_1) G(\epsilon,y_2-x+z_2) \\
 &= C \int_0^t\ud s \iint_{\R^{2d}}\ud z_1\ud z_2 \: \left|R^{\epsilon}(s, z_1) - G(s, z_1) \right| \: \left|R^{\epsilon}(s, z_2) - G(s, z_2) \right| f_{2\epsilon}(z_1-z_2),
\end{align*}
where
\[
f_{2\epsilon}(z) = (f* G(2\epsilon,\cdot))(z).
\]
Hence,
\begin{align*}
I_5(t,x;\epsilon)
&\le C \int_0^t\ud s \int_{\R^{d}}\ud z_1\ud z_2  \left|R^{\epsilon}(s, z_1) - G(s, z_1) \right| 
\int_{\R}\ud z_2 \:  \left(R^{\epsilon}(s, z_2) + G(s, z_2) \right) f_{2\epsilon}(z_1-z_2).
\end{align*}
Notice that by the assumption of $f$ in this step,  
\begin{align*}
\int_{\RR^{d}}\Big(R^{\epsilon}(s, z_2) + & G(s, z_2) \Big)  f_{2\epsilon}(z_1-z_2) \ud z _2 \\
 \leq&\int_{\RR^{d}}\Big(R^{\epsilon}(s, z_2) + G(s, z_2) \Big)  f_{2\epsilon}(z_2) \ud z _2\\
=& \int_{\RR^d} \left(e^{-s/\epsilon} \sum_{n=1}^{\infty} \frac{(s/\epsilon)^n}{n!} e^{-\frac{n\epsilon}{2}|\xi|^2} + e^{-\frac{s|\xi|^2}{2}} \right) e^{-\epsilon |\xi|^2} \hat{f}(\ud\xi) \leq C\,.
\end{align*}    
Thus, according to Lemma \ref{lem:8.2}, we have 
\begin{align*}
I_5(t,x;\epsilon)\leq& C \int_0^t \left( e^{-s/\epsilon} + \frac{\epsilon^{1/2}}{s^{1/2}} \right) 
 \leq C \epsilon^{1/2}\,.
\end{align*}    
Thus,  
\begin{align}
 \sup_{x\in\R^d}\sup_{s\in (0,t]} I_5(s,x; \epsilon) \le  C\: \epsilon^{1/2}.
\end{align}    
Now we study $I_6$.
By Lemma \ref{L:GG-xt},
\begin{align*}
I_6(t,x;\epsilon) &= \E \Bigg[\int_0^t\ud s \iint_{\RR^{2d}} \ud y_1 \ud y_2 \: f(y_1-y_2) \rho(u(s,y_1)) \rho(u(s,y_2))\\
&\qquad \times \left(\int_{\RR^d}  \ud z_1 \left(G(t-s, x-z_1) - G(t-s, x-y_1) \right) G(\epsilon,y_1-z_1) \right) \\
&\qquad \times\left(\int_{\RR^d}  \ud z_1 \left(G(t-s, x-z_2) - G(t-s, x-y_2) \right) G(\epsilon,y_2-z_2) \right)\Bigg]\\
& \leq C \int_0^t \ud s \iint_{\RR^{2d}} \ud y_1 \ud y_2 \: f(y_1-y_2) \iint_{\R^{2d}} \ud z_1\ud z_2  \\
&\qquad \times \left|G(t-s, x-z_1) - G(t-s, x-y_1) \right| G(\epsilon,y_1-z_1) \\
&\qquad \times  \left|G(t-s, x-z_2) - G(t-s, x-y_2) \right| G(\epsilon,y_2-z_2)\\
&\leq  C \int_0^t\ud s\:\frac{1}{(t-s)^{1/2}}  \iint_{\RR^{2d}}\ud y_1\ud y_2 \: f(y_1-y_2) \iint_{\R^{2d}} \ud z_1\ud z_2\\
&\qquad \times |z_1-y_1|^{1/2} \left[G(2(t-s),x-z_1)+G(2(t-s),x-y_1)\right] G(\epsilon,y_1-z_1) \\
&\qquad \times |z_2-y_2|^{1/2} \left[G(2(t-s),x-z_2)+G(2(t-s),x-y_2)\right] G(\epsilon,y_2-z_2).
\end{align*}
Then {by \eqref{E:aaGG} with $\alpha=1/2$ and by} the semigroup property, 
\begin{align*}
I_6(t,x;\epsilon) &\leq  C \epsilon^{1/2} \int_0^t\ud s\:\frac{1}{s^{1/2}}  \iint_{\RR^{2d}}\ud y_1\ud y_2 \: f(y_1-y_2) \iint_{\R^{2d}} \ud z_1\ud z_2\\
&\qquad \times \left[G(2s,x-z_1)+G(2s,x-y_1)\right] G(2\epsilon,y_1-z_1) \\
&\qquad \times \left[G(2s,x-z_2)+G(2s,x-y_2)\right] G(2\epsilon,y_2-z_2)\\
&=  C \epsilon^{1/2} \int_0^t\ud s\:\frac{1}{s^{1/2}}  \iint_{\RR^{2d}}\ud y_1\ud y_2 \: f(y_1-y_2) 
G(2(s+\epsilon),x-y_1)G(2(s+\epsilon),x-y_2)\\
&\le 
C \epsilon^{1/2} \int_0^\infty\ud s\:\frac{1}{s^{1/2}}  \int_{\R^{d}} e^{-2(s+\epsilon)(|\xi|^2+1)} \hat{f}(\ud \xi)\\
&\le C \epsilon^{1/2} \int_{\RR^d} \frac{\hat{f}(\ud\xi)}{(1+|\xi|^2)^{1/2}}\leq C \epsilon^{1/2}\,.
\end{align*}
Thus, 
\begin{align}
\sup_{x\in\R^d}\sup_{s\in[0,t]}I_6(s,x;\epsilon)\leq C \epsilon^{1/2}\,.
\end{align}
Therefore, by setting 
\[
M(t;\epsilon):=\sup_{y\in\R^d}\Norm{u_\epsilon(t,y)-u(t,y)}_2^2,
\]
we have shown that  
\[
M(t;\epsilon) \le C \left(\epsilon^{1/2}+e^{-t/\epsilon} + \sqrt{\epsilon/t}\right) +
C \int_0^t  M(s;\epsilon) \ud s.
\]
Then an application of Gronwall's lemma shows that
\[
M(t;\epsilon) \le 
 C \left(\epsilon^{1/2}+e^{-t/\epsilon} + \sqrt{\epsilon/t}\right) 
 + C e^{Ct}\int_0^t   \left(\epsilon^{1/2}+e^{-s/\epsilon} + \sqrt{\epsilon/s}\right)\ud s
 \rightarrow 0,\quad\text{as $\epsilon\rightarrow0$,}
\]
which proves \eqref{eq: appro sol L2 conv}.

{\bigskip\noindent\bf Step 3}
{
In this step we still work under the same assumption on the initial condition as in Step 2,
i.e., $\mu(\ud x)=g(x)\ud x$ with $g\ge 0$ and $g\in L^\infty(\R^d)$,
but we assume that the covariance function $f$ satisfies Dalang's condition \eqref{E:Dalang}. 
Choose a nonnegative and nonnegative definite function $\phi$ as in part (2) of 
Theorem \ref{T:Approx} (see also Remark \ref{R:Example}). 
Let $u(t,x)$ and $u_\epsilon(t,x)$ be the solutions to \eqref{E:SHE} and \eqref{E:SHE regularize},
respectively, with the same initial data $\mu$.
From the proof of part (2) of Theorem \ref{T:Approx},
we see that the spatial covariance function for $M^\epsilon$ is $\left(f*\phi_\epsilon*\phi_\epsilon\right)(x)$. 
We claim that $\left(f*\phi_\epsilon*\phi_\epsilon\right)(x)$ satisfies \eqref{E:DalangAlpha} with $\alpha=1$. Indeed, because $\phi(x)\le C G(1,x)$, we have that
$\phi_\epsilon(x) \le C G(\epsilon^2,x)$ and 
\begin{align*}
\int_{\R^d} \hat{f}(\ud\xi) \hat{\phi}_\epsilon(\xi)^2 
&=C 
\iint_{\R^{2d}} f(x-y) \phi_\epsilon(x)\phi_\epsilon(y)\ud x\ud y \\
&\le C \iint_{\R^{2d}} f(x-y) G(\epsilon^2,x)G(\epsilon^2,y)\ud x\ud y \\
&=C \int_{\R^{d}} f(y) G(2\epsilon^2,y)\ud y =C k(2\epsilon^2)<\infty,
\end{align*}
where $k(\cdot)$ is defined in \eqref{E:k}.
Hence, by Step 2, we see that
\[
\bbP\left(u_\epsilon(t,x)\ge 0\right)=1,\quad\text{for all $t>0$ and $x\in\R^d$.}
\]
Part (2) of Theorem \ref{T:Approx} implies that 
$u_\epsilon(t,x)$ converges to $u(t,x)$ a.s., for each $t>0$ and $x\in\R^d$.
Therefore, 
\[
\bbP\left(u(t,x)\ge 0\right)=1,\quad\text{for all $t>0$ and $x\in\R^d$.}
\]
Finally, suppose that $\mu_i(\ud x)=g_i(x)\ud x$ with $g_i\in L^\infty(\R^d)$, $i=1,2$. Let $u_{\epsilon, i}$ be the solutions of \eqref{E:SHE regularize} driven by $M^{\epsilon}$ and starting from initial conditions $\mu_i$.
If $g_1(x)\le g_2(x)$ for almost all $x\in\R^d$, then by Step 1, 
$v_\epsilon(t,x):= u_{\epsilon,2}(t,x)- u_{\epsilon,1}(t,x)\ge 0$ a.s. 
for all $t>0$ and $x\in\R^d$.
This step implies that
$v_\epsilon(t,x)$ converges to $v(t,x)=u_2(t,x) - u_1(t,x)$ in $L^2(\Omega)$ for all $t>0$ and $x\in\R^d$. 
Therefore,  $v(t,x)$ is nonnegative a.s., i.e., 
\[
\bbP\Big(u_1(t,x)\le u_2(t,x)\Big) =1\,, \:\text{for all $t>0$ and $x\in\R^d$}\,.
\]
}

{\bigskip\noindent\bf Step 4}.
Now we assume that the initial data $\mu_1$ and $\mu_2$ are measures that satisfy \eqref{E:J0finite}.
Recall the definition of $\psi_\epsilon$ in \eqref{E:psi}.
For $\epsilon>0$, let $u_{\epsilon,i}$, $i=1,2$, be the solutions to \eqref{E:SHE} starting from
$\left([\mu_i \psi_\epsilon]*G(\epsilon,\cdot)\right)(x)$.
Denote $v(t,x)=u_2(t,x)-u_1(t,x)$ and $v_\epsilon(t,x)=u_{\epsilon,2}(t,x)-u_{\epsilon,1}(t,x)$.
Because $\psi_\epsilon$ is a continuous function with compact support on $\R$,
the initial data for $u_{\epsilon,i}(t,x)$ are bounded functions.
By Step 3, we have that
\[
\bbP\Big(v_\epsilon(t,x)\ge 0\Big) =1,\quad\text{\:\text{for all $t>0$,  $x\in\R^d$} and  $\epsilon>0$.}
\]
Then part (1) of Theorem \ref{T:Approx} implies that
\[
\bbP\Big(v(t,x)\ge 0\Big) =1, \:\text{for all $t>0$ and $x\in\R^d$},
\]
which completes the whole proof of Theorem \ref{T:WComp}.
\end{proof}

\section{Strong comparison principle and strict positivity \\
(Proofs of Theorems \ref{T:SComp} \& \ref{T:Pos})}
\label{S:SComp}

We need some lemmas. Denote $Q(r)=[-r,r]^d$, i.e., a $d$-dimensional centered cube in $\RR^d$ of radius $r$.

\begin{lemma}\label{lem: comp initial}
Let $\ell>0$. For all $t >0$ and $M >0$, there exists some constants $1 < m_0=m_0(t,M)<\infty$ and $\gamma>0 $ such that for all $m \geq m_0$, $s \in \left[\frac{t}{2m}, \frac{t}{m}\right]$ and $x\in \RR^d$,
\begin{equation}
\left(G(s,\cdot)* \one_{Q(\ell)}\right)(x)\geq \gamma \one_{Q\left(\ell + \frac{M}{m}\right)}(x)\,.
\end{equation}
\end{lemma}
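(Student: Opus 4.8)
The plan is to use the product (tensor) structure of the Gaussian kernel to reduce the statement to a one–dimensional Gaussian tail estimate. Writing $G(s,x)=\prod_{i=1}^{d}g(s,x_i)$ with $g(s,z)=(2\pi s)^{-1/2}e^{-z^2/(2s)}$ and $\one_{Q(\ell)}(y)=\prod_{i=1}^{d}\one_{[-\ell,\ell]}(y_i)$, I would first record the identity
\[
\left(G(s,\cdot)*\one_{Q(\ell)}\right)(x)=\prod_{i=1}^{d}\int_{-\ell}^{\ell}g(s,x_i-y_i)\,\ud y_i=\prod_{i=1}^{d}\left[\Phi\!\left(\frac{x_i+\ell}{\sqrt{s}}\right)-\Phi\!\left(\frac{x_i-\ell}{\sqrt{s}}\right)\right],
\]
where $\Phi$ denotes the standard Gaussian distribution function. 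Since each factor is even in $x_i$, it suffices to bound it from below for $x_i\in[0,\ell+M/m]$.

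For such $x_i$ one has $x_i+\ell\ge\ell$ and $x_i-\ell\le M/m$, so by monotonicity of $\Phi$ each factor is at least $\Phi(\ell/\sqrt{s})-\Phi\!\left(\frac{M}{m\sqrt{s}}\right)$. Next, using $s\le t/m$ in the first term and $s\ge t/(2m)$ in the second, I would bound this further below by the $s$- and $x$-independent quantity
\[
a_m:=\Phi\!\left(\ell\sqrt{m/t}\right)-\Phi\!\left(M\sqrt{2/(tm)}\right).
\]

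Finally, as $m\to\infty$ we have $\ell\sqrt{m/t}\to\infty$ and $M\sqrt{2/(tm)}\to0$, hence $a_m\to1-\Phi(0)=1/2$. Therefore I would choose $m_0=m_0(t,M,\ell)>1$ so that $a_m\ge 1/4$ for all $m\ge m_0$, and set $\gamma:=4^{-d}$. For every $m\ge m_0$, $s\in[t/(2m),t/m]$ and $x\in Q(\ell+M/m)$ the displayed product is then at least $a_m^{\,d}\ge\gamma$, while for $x\notin Q(\ell+M/m)$ the right-hand side of the claimed inequality is zero and nothing has to be proved. I do not expect any real obstacle here: the argument is an elementary Gaussian computation, and the only point demanding a little care is the uniform-in-$s$ reduction to $a_m$, which is precisely where the specific time window $[t/(2m),t/m]$ enters.
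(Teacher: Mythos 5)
Your proof is correct and follows essentially the same route as the paper: the paper's proof also reduces the claim to one dimension via the product structure of the Gaussian kernel and then invokes the one-dimensional estimate of Lemma 4.1 in \cite{CK14Comp}, which is precisely the elementary $\Phi$-difference bound you carry out explicitly on the time window $[t/(2m),t/m]$. Your uniform lower bound $a_m\to 1/2$ and the choices $m_0$, $\gamma=4^{-d}$ are all valid, so no further changes are needed.
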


\begin{proof}
Since the $d$-dimensional heat kernel can be factored as a product of one-dimensional heat kernel, 
so the proof will be parallel with the proof of Lemma 4.1 in \cite{CK14Comp}. 
We will not repeat it here.
\end{proof}

\begin{lemma}\label{L:induction}
Let $\ell >0$, $t>0$, and $M > 0$. 
Assume that \eqref{E:DalangAlpha} holds for some $\alpha\in (0,1]$.
If $\rho(0)=0$ and $\mu(\ud x)= \one_{Q(\ell)}(x)\ud x$,
then there are some finite constants $\Theta:= \Theta(\beta, \LIP_{\rho}, t)>0$, $\beta >0$, and $m_0>0$ such that for all $m \geq m_0$,
\begin{align*}
\bbP\bigg( u(s,x) \geq \beta \one_{Q\left( \ell + \frac{M}{m}\right)}(x) \:\: &\text{for all $\frac{t}{2m}\leq s \leq \frac{t}{m}$ and $x\in\R^d$}\bigg)\\
&\geq 1- \exp \left( - \Theta m^{\alpha} \left( \log m\right)^{1+\alpha}\right)\,,
\end{align*}
where $\alpha\in (0,1]$ is the constant in \eqref{E:DalangAlpha}.
\end{lemma}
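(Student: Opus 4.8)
The plan is to run the Mueller-type propagation argument (cf.\ \cite{CK14Comp}), now powered by the moment bound \eqref{E:MomAlpha} and the H\"older estimates of Theorem~\ref{T:Holder}, both available here because \eqref{E:DalangAlpha} holds. Write the solution started from $\mu=\one_{Q(\ell)}$ as $u(s,x)=J_0(s,x)+\calI(s,x)$, where $J_0(s,x)=(G(s,\cdot)*\one_{Q(\ell)})(x)$ and $\calI$ is the stochastic convolution. Two features of this $\mu$ will be used throughout: $J_0\le 1$ everywhere (so no initial-data growth enters any estimate), and $\Vip=|\rho(0)|/\Lip_\rho=0$ since $\rho(0)=0$. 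By the nonnegativity corollary following Theorem~\ref{T:WComp}, together with the a.s.\ continuity of $u$ from Theorem~\ref{T:Holder}, we have $u\ge 0$ a.s.\ on $(0,\infty)\times\R^d$; hence the asserted inequality is automatic for $x\notin Q(\ell+M/m)$, and it suffices to work on the compact box $K_m:=[\tfrac{t}{2m},\tfrac{t}{m}]\times Q(\ell+\tfrac{M}{m})$. First I would invoke Lemma~\ref{lem: comp initial} to produce $m_0$ and $\gamma>0$, depending only on $t,M,\ell$, with $J_0(s,x)\ge\gamma$ for every $(s,x)\in K_m$ and every $m\ge m_0$. Setting $\beta:=\gamma/2$, the claim reduces to showing
\[
\bbP\Big(\sup_{(s,x)\in K_m}|\calI(s,x)|>\tfrac{\gamma}{2}\Big)\le\exp\!\big(-\Theta\,m^{\alpha}(\log m)^{1+\alpha}\big)
\]
for all $m\ge m_0$, with $\Theta=\Theta(\beta,\Lip_\rho,t)>0$.

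To control this supremum I would first record $L^p(\Omega)$-bounds for $\calI$ on the short time-scale $s\le t/m$. Inserting $\|u(r,y)\|_p\le C\exp(Cp^{1/\alpha}r)$ (from \eqref{E:MomAlpha}, with $\Vip=0$ and $J_0\le 1$) into the Burkholder-Davis-Gundy estimate for the Walsh integral, and using $\int_0^s k(2r)\,\ud r\le C_\alpha s^{\alpha}$ --- which is exactly where \eqref{E:DalangAlpha} enters, via Lemma~\ref{L:shatf} and the manipulations in the proof of Lemma~\ref{L:IntIneq} --- gives, for all $p\ge 2$, $x\in\R^d$, $0<s\le t/m$,
\[
\|\calI(s,x)\|_p\le C\sqrt{p}\;s^{\alpha/2}\exp\!\big(Cp^{1/\alpha}s\big),
\]
with $C$ independent of $m,x$. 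Rerunning Steps~1--3 of the proof of Theorem~\ref{T:Holder} with the dependence on $s$ and $p$ kept explicit gives the companion increment bounds, e.g.\
\[
\|\calI(s,x)-\calI(s,x')\|_p\le C\sqrt{p}\,e^{Cp^{1/\alpha}t/m}\min\!\big(|x-x'|^{\alpha},(t/m)^{\alpha/2}\big),
\]
for $(s,x),(s,x')\in K_m$ (the second entry of the minimum being just $\|\calI(s,x)\|_p+\|\calI(s,x')\|_p$), and likewise for time increments; consequently only length scales $\lesssim\sqrt{t/m}$ matter in the chaining. Covering $K_m$ by $N_m$ cells of spatial side $\asymp\sqrt{t/m}$ and temporal length $\asymp t/m$, with $N_m$ at most polynomial in $m$, a Kolmogorov-Chentsov argument turns these into
\[
\left\|\sup_{(s,x)\in K_m}|\calI(s,x)|\right\|_p\le C\sqrt{p}\,(t/m)^{\alpha/2}\exp\!\big(Cp^{1/\alpha}t/m\big),\qquad p\ge c\log m,
\]
and Chebyshev's inequality gives $\bbP(\sup_{K_m}|\calI|>\gamma/2)\le\big(2C\gamma^{-1}\sqrt{p}\,(t/m)^{\alpha/2}e^{Cp^{1/\alpha}t/m}\big)^{p}$. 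I would then choose the moment order $p=p_m$ growing with $m$ --- if necessary first intersecting with the high-probability event $\{\sup|u|\le R_m\}$, whose complement is estimated via \eqref{E:MomAlpha} with an optimized moment order and contributes a term of order $\exp(-\Theta m^{\alpha}(\log R_m)^{1+\alpha})$ --- and, collecting the $\log\log m$ terms produced by $\log p$ and balancing the competing factors $p^{p/2}$, $(t/m)^{\alpha p/2}$, $e^{Cp^{1+1/\alpha}t/m}$, $N_m$ (and $R_m$), obtain the asserted bound $\exp(-\Theta m^{\alpha}(\log m)^{1+\alpha})$ for all $m$ large. Together with the reduction above this proves the lemma.

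The crux --- and the step I expect to be most delicate --- is precisely this last supremum/tail estimate: one must splice a moment--chaining bound for $\calI$ on the vanishing window $[\tfrac{t}{2m},\tfrac{t}{m}]$ with the super-polynomial tail of $\sup|u|$ coming from \eqref{E:MomAlpha}, keep all constants uniform in $x$, and optimize the moment order(s) against $m$ carefully enough to recover the exponent $m^{\alpha}(\log m)^{1+\alpha}$ rather than the coarser $m^{\alpha}$ that a fixed moment order would give. A minor but genuine point is the behavior of $J_0$ near the corners of $Q(\ell+M/m)$, where $(G(s,\cdot)*\one_{Q(\ell)})$ is smallest; this is exactly what Lemma~\ref{lem: comp initial} controls, and it is what pins down both the admissible time-window $s\in[\tfrac{t}{2m},\tfrac{t}{m}]$ and the constant $\gamma$ (hence $\beta$).
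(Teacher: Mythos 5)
Your overall strategy is the same as the paper's: reduce, via Lemma \ref{lem: comp initial} (plus nonnegativity/continuity for $x$ outside the box), to a tail estimate for $\sup_{S}|\calI|$ over $S=[\frac{t}{2m},\frac{t}{m}]\times Q(\ell+\frac{M}{m})$; control this supremum in $L^p(\Omega)$ by feeding the moment bound \eqref{E:MomAlpha} and the increment estimates behind Theorem \ref{T:Holder} into a Kolmogorov/chaining argument, the short window giving a small factor because $\calI(0,\cdot)\equiv 0$; and finally optimize over a moment order $p$ growing like $(m\log m)^{\alpha}$. Up to the covering/chaining step your outline is consistent with the paper (the auxiliary truncation event $\{\sup|u|\le R_m\}$ is unnecessary and does not appear there).

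The gap is exactly at the step you defer to ``balancing the competing factors'': with the factors as you display them, no choice of $p$ yields the claimed rate. Your Chebyshev bound is $\bbP(\sup_S|\calI|>\gamma/2)\le\exp\big(p\,[\log\tfrac{2C}{\gamma}+\tfrac12\log p-\tfrac{\alpha}{2}\log\tfrac{m}{t}+Cp^{1/\alpha}\tfrac{t}{m}]\big)$. The only negative term in the bracket is $-\tfrac{\alpha}{2}\log\tfrac{m}{t}$, so the bracket is never below $-\tfrac{\alpha}{2}\log m$ in order of magnitude; to reach $-\Theta m^{\alpha}(\log m)^{1+\alpha}$ you are forced to take $p\gtrsim m^{\alpha}(\log m)^{\alpha}$, but any such $p$ gives $\tfrac12\log p\ge\tfrac{\alpha}{2}\log m+\tfrac{\alpha}{2}\log\log m+O(1)$, which already exceeds $\tfrac{\alpha}{2}\log\tfrac{m}{t}$ for large $m$, and the term $Cp^{1/\alpha}t/m\asymp\log m$ is also positive; the bracket is then positive and the bound is trivial. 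In other words, the Burkholder--Davis--Gundy factor $\sqrt{p}$, which your derivation genuinely carries (and which is sharp for the first Wiener chaos of $\calI$, e.g.\ for Riesz kernels, since $\|\calI(s,x)\|_p\gtrsim\sqrt{p}\,\mathrm{Var}(\calI(s,x))^{1/2}$), cancels your gain $(t/m)^{\alpha p/2}$ at the relevant $p$, and nothing of size $-p\log m$ survives; the best your displayed estimate can give is of order $\exp(-cm^{\alpha})$, obtained at a much smaller $p$. The paper's proof differs precisely here: it applies Kolmogorov's continuity theorem to the H\"older quotient $\calI/(|x-x'|^{\alpha}+|s-s'|^{\alpha/2})^{\eta}$ with $\eta<1$ and records the resulting bound in the form $\beta^{-p}\E[\sup_S|\calI|^p]\le C\,\tau^{\frac{\alpha}{2}p\eta}e^{Cp^{(\alpha+1)/\alpha}\tau}$, $\tau=t/m$, i.e.\ with all polynomial-in-$p$ prefactors absorbed, and then minimizes $f(p)=\tfrac{\alpha}{4}p\log\tau+Cp^{(\alpha+1)/\alpha}\tau$ explicitly, the minimizer $p'=A[m\log m]^{\alpha}$ producing $-\Theta m^{\alpha}(\log m)^{1+\alpha}$. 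To close your argument you must either derive the sup-moment bound in that $p^{p/2}$-free form or justify quantitatively how $p^{p/2}$ is absorbed when $p\asymp(m\log m)^{\alpha}$; as your own factors show, this is the borderline issue on which the whole rate hinges, not routine bookkeeping.
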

\begin{proof}
This proof follows similar arguments as those in the proof of Lemma 4.3 in \cite{CK14Comp}.
Here we only give a sketch of it.
Denote $S:=S_{t,m,\ell, M}:= \left\{(s,y): \frac{t}{2m} \leq s \leq \frac{t}{m},y \in Q\left(\ell + \frac{M}{m}\right)\right\}$.
By Lemma \ref{lem: comp initial}, for some constant $\beta >0$,
\begin{equation}
\left(\mu * G(s,\cdot)\right)(x)\geq 2\beta \one_{Q\left(\ell+ \frac{M}{m}\right)}(x) \quad \text{for all}\ s\in \left[\frac{t}{2m}, \frac{t}{m}\right]\ \text{and}\ x \in \RR^d\,.
\end{equation}
Then the stochastic integral part $I(t,x)$ of the mild solution in \eref{E:mild} satisfies
\begin{align*}
\bbP \Big(u(s,x)<& \:\beta \one_{Q( \ell+ \frac{M}{m})} \quad \text{for some } \ \frac{t}{2m}\leq s \leq \frac{t}{m}\ \text{and}\ x\in \RR^d\Big) \\
\leq&\bbP \Big( I(s,x)< -\beta \ \text{for some}\ (s,x)\in S\Big)\\
\leq&\bbP \left( \sup_{(s,x)\in S} |I(s,x)|> \beta\right) \leq \beta^{-p} \E \left( \sup_{(s,x)\in S} |I(s,x)|^p\right)\,.
\end{align*}
Denote $\tau = t/m$ and $S' := \left\{(s,y): 0 \leq s \leq t/m, |y|\leq \ell + M/m \right\}$. Using the fact that $I(0,x)\equiv 0$ for all $x\in \RR^d$, we see that for all $0 < \eta < 1-\frac{6d}{\alpha p}$,
\begin{align*}
\E \left( \sup_{(s,x)\in S} \left| \frac{I(s,x)}{\tau^{\frac{\alpha\eta}{2}}}\right|^p\right)
\leq&  \E \left( \sup_{(s,x), (s',x')\in S'} \left|\frac{I(s,x)-I(s',x')}{\left( |x-x'|^{\alpha}+|s-s'|^{\alpha/2}\right)^{\eta}}\right|^p\right)\,.
\end{align*}
We are interested in, and hence assume in the following, the case when $p=O([m\log m]^{\alpha})$ as $m\rightarrow\infty$;
see \eqref{E:Opt-p} below.
Since our initial condition is bounded, by \eqref{E:MomAlpha}, an application of the
Kolmogorov's continuity theorem shows that for large $p$,
\begin{align*}
\beta^{-p} \E \left( \sup_{(s,x)\in S} |I(s,x)|^p\right) \leq& C \tau^{\frac{\alpha}{2} p \eta}
e^{C p^{\frac{\alpha+1}{\alpha}}\tau}
\leq  C\exp \left( \frac{1}{2}\alpha p \eta \log \left( \tau\right)+ C p ^{\frac{\alpha+1}{\alpha}}\tau \right)\,.
\end{align*}
Since $p$ is large, we may choose $\eta=1/2$. Hence, the exponent in the right-hand side of the above inequalities becomes
\[
f(p):=
\frac{1}{4}\alpha p \log \left( \tau\right)+ C p ^{\frac{\alpha+1}{\alpha}} \tau.
\]
Some elementary calculation shows that $f(p)$ is minimized at
\begin{align}\label{E:Opt-p}
p=\left(\frac{\alpha ^2 \log (1/\tau)}{4(\alpha +1) C\tau }\right)^{\alpha}
=\left(\frac{\alpha ^2 m \log (m/t)}{4(\alpha +1) C t }\right)^{\alpha}.
\end{align}
Hence, for some positive constants $A$ and $\Theta$,
\[
\min_{p \ge 2} f(p)\le f(p') =
-\Theta m^\alpha \left[\log(m)\right]^{1+\alpha}\quad\text{with $p'=A \left[m\log(m)\right]^\alpha$.}
\]
This completes the proof of Lemma \ref{L:induction}.
\end{proof}

\bigskip
\begin{proof}[Proof of Theorem \ref{T:SComp}]
This proof follows the same arguments as those in the proof of Theorem 1.3 in \cite{CK14Comp}.
Here we only give a sketch of the proof. Interested readers are referred to \cite{CK14Comp} for details.

{Let $u(t,x):= u_2(t,x)-u_1(t,x)$ and denote $\tilde{\rho}(u)= \rho(u + u_1)-\rho(u_1)$. Then it is not hard to see that $u(t,x)$ is a solution to \eref{E:SHE} with the nonlinear function $\tilde{\rho}$ and the initial data $\mu:=\mu_2-\mu_1$. Note that $\tilde{\rho}$ is a Lipschitz continuous function with the same Lipschitz constant as for $\rho$ and $\tilde{\rho}(0)=0$. For simplicity, we will use $\rho$ instead of $\tilde{\rho}$. By the weak comparison principle, we only need to consider the case when $\mu$ has compact support and show that $u(t,x)>0$ for all $t>0$ and $x\in \RR^d$, a.s.
}

{\bigskip\bf\noindent Case I.~}
We fist assume that $\mu(\ud x)=\one_{Q(\ell)}\ud x$ for some $\ell>0$.
Denote
\begin{equation}\label{E:CM}
c(m):=\exp\left(- \Theta \: m^{\alpha} [\log(m)]^{1+\alpha}\right),
\end{equation}
where $\Theta$ is a constant defined in Lemma \ref{L:induction}.
We comment that due to a version mismatch in \cite{CK14Comp},
$B_0$ should be defined separately,  i.e.,
\begin{align*}
A_k &:= \left\{u(s,x)\ge \beta^{k+1} \one_{S_k^m}(x)\:\text{for all $s\in\left[\frac{(2k+1)t}{2m},\frac{(k+1)t}{m} \right]$ and $x\in\R^d$}\right\},\quad k\ge 0,\\
B_k &:= \left\{u(s,x)\ge \beta^{k+1} \one_{S_k^m}(x)\:\text{for all $s\in\left[\frac{kt}{m},\frac{(2k+1)t}{2m} \right]$ and $x\in\R^d$}\right\}, \quad k\ge 1, \\
B_0& :=\left\{u\left(\frac{t}{2m},x\right)\ge \beta \one_{S_0^m}(x)\:\text{for all $x\in\R^d$}\right\},
\end{align*}
where
{
\[
S^m_k :=\: \left (-\ell-\frac{Mk}{m},\ell+\frac{Mk}{m}\right)\:.
\]
}
See Figure \ref{F:SComp} for an illustration of the schema.

\begin{figure}
 \centering
 \includegraphics{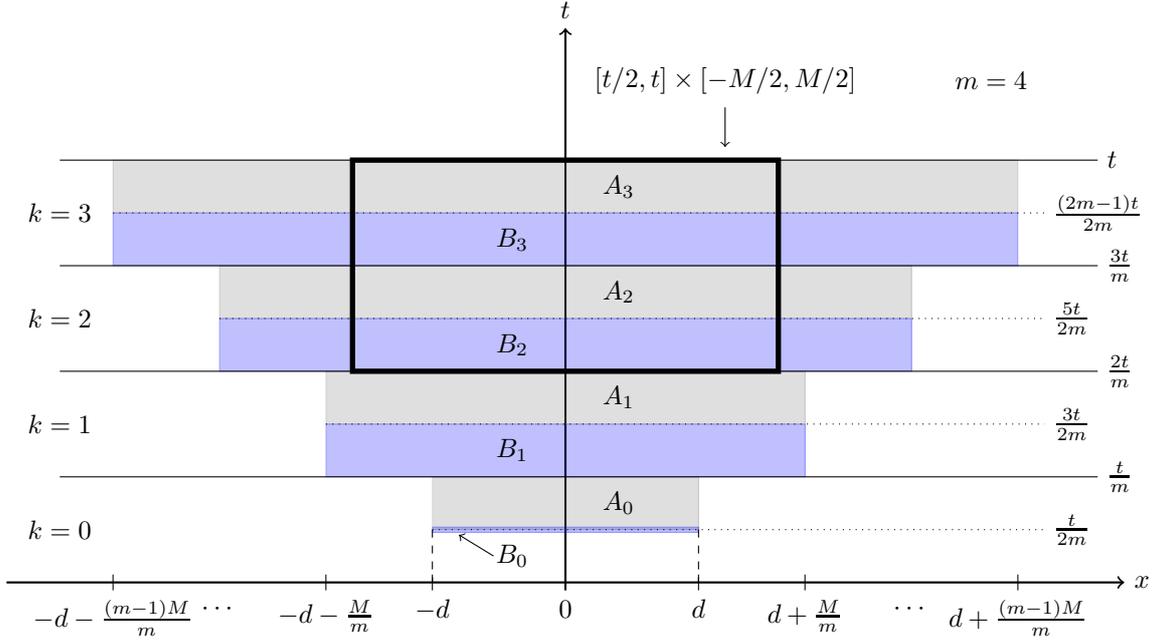}
 \caption{Induction schema for the strong comparison principle in the one-spatial dimension case.}
 \label{F:SComp}
\end{figure}

By an argument using the strong Markov property, one can show that
\[
\bbP\left(A_k\mid \calF_{kt/m}\right)\ge 1- c(m), \quad\text{a.s. on $A_{k-1}$ for $0\le k\le m-1$,}
\]
which implies 
\[
\bbP\left(A_k\mid A_{k-1}\cap \cdots \cap A_0\right) \ge 1-c(m),\quad\text{for all $1\le k\le m-1$.}
\]
Notice that the fact that $A_0\subseteq B_0$ implies that $\bbP(B_0) \ge \bbP(A_0) \ge 1-c(m)$.
By similar arguments as those for $A_k$, one can show that
\[
\bbP\left(B_k\mid B_{k-1}\cap \cdots \cap B_0\right) \ge 1-c(m),\quad\text{for all $1\le k\le m-1$.}
\]
Then,
\begin{align}\notag
\bbP\left(\cap_{0\le k\le m-1} \left[A_k \cap B_k\right] \right)
&\ge 1-\left(1-\bbP\left(\cap_{0\le k\le m-1}A_k\right)\right)-\left(1-\bbP\left(\cap_{0\le k\le m-1}B_k\right)\right)\\ \notag
&\ge (1-c(m))^{m-1}\bbP(A_0)+ (1-c(m))^{m-1}\bbP(B_0) -1\\
&\ge 2(1-c(m))^{m}-1.
\label{E_:abcup}
\end{align}
Therefore, for all $t>0$ and $M>0$,
\begin{align*}
 \bbP\Big(u(s,x)> 0 \;\; \text{for all $t/2\le s\le t$ and $x\in Q(M/2)$}\Big)
&\ge
\lim_{m\rightarrow\infty} \bbP\Big(\cap_{0\le k\le m-1} \left[A_k \cap B_k\right] \Big)
\\
&\ge \lim_{m\rightarrow\infty} 2(1-c(m))^{m}-1 =1.
\end{align*}
Since $t$ and $M$ are arbitrary, this completes the proof for the case when $\mu(\ud x)=\one_{Q(\ell)}\ud x$.

{\bigskip\bf\noindent Case II.~}
Now for general initial data $\mu$, we only need to prove that for each $\epsilon>0$,
\begin{align}\label{E:uEpsilon}
\bbP\left(u(t,x)>0\; \text{for $t\ge \epsilon$ and $x\in\R^d$}\right) =1.
\end{align}
Fix $\epsilon>0$. Denote $V(t,x):=u(t+\epsilon,x)$. By the Markov property, $V(t,x)$ solves \eref{E:SHE} with
the time-shifted noise $\dot{M}_\epsilon (t,x):=\dot{M}(t+\epsilon,x)$ starting from $V(0,x)=u(\epsilon,x)$, i.e.,
\begin{align}
 \label{E:VInt}
V(t,x) &= \left(u(\epsilon,\circ)* G(t,\cdot)\right)(x)
+ \iint_{[0,t]\times\R^d} \rho(V(s,y)) G(t-s,x-y)M_\epsilon(\ud s,\ud y).
\end{align}

We first prove by contradiction that
\begin{align}\label{E:uepsilonx}
\bbP\left(u(\epsilon,x)= 0, \;\text{for all $x\in\R^d$}\right) =0.
\end{align}
Notice that by Theorem \ref{T:Holder}, the function $x\mapsto u(t,x)$ is H\"older continuous over $\R^d$
a.s. The weak comparison principle (Theorem \ref{T:WComp}) shows that $u(t,x)\ge 0$ a.s.
Hence, if \eqref{E:uepsilonx} is not true, then by the Markov property and the strong comparison principle in Case I, at all times $\eta\in [0, \epsilon]$, with some strict positive probability,
$u(\eta,x)=0$ for all $x\in\R^d$, which contradicts Theorem \ref{T:WeakSol} as $\eta$ goes to zero.
Therefore, there exists a sample space $\Omega'$ with $\bbP(\Omega')=1$ such that for each $\omega\in\Omega'$,
there exists $x\in\R^d$ such that $u(\epsilon,x,\omega)>0$.

Since $u(\epsilon,x,\omega)$ is continuous at $x$, one can find two nonnegative constants $c=c(\omega)$ and $\beta=\beta(\omega)$ such that
$u(\epsilon,y,\omega)\ge \beta \one_{x+Q(c)}(y)$ for all $y\in\R^d$.
Then Case I implies that
\[
\bbP\left(V_{\omega}(t,x)>0\;\text{for all $t\ge 0$ and $x\in\R^d$}\right) =1,
\]
where $V_\omega$ is the solution to \eqref{E:VInt} starting from $u(\epsilon,x,\omega)$. Therefore, \eqref{E:uEpsilon} is true.
This completes the proof of Theorem \ref{T:SComp}.
\end{proof}

\bigskip
\begin{proof}[Proof of Theorem \ref{T:Pos}]
Following the proof of Theorem \ref{T:SComp}, since $K$ is compact, we can choose $\eta, T, N >0$ such that $K \subset [\eta, T]\times Q(N)$.  Let $\beta$, $A_k$ and $B_k$ be as in the proof of Theorem \ref{T:SComp},  we have
\begin{align*}
\bbP \left( \inf_{(t,x)\in K} u(t,x)< \beta^m\right) \leq& 1- \bbP \Big( \cap_{0 \leq k \leq m-1} (A_k \cap B_k)\Big)\\
\leq& 2 \left[1- (1-c(m))^m \right]\,,
\end{align*}
where $c(m)$ is a positive quantity defined in \eqref{E:CM}.
Then we use the fact that $(1-x)^m\geq 1-m x$ for all $x>0$ and $m>1$ to conclude that
for some $\Theta'$ slightly bigger than the $\Theta$ in \eqref{E:CM},
\begin{align*}
\bbP \left( \inf_{(t,x)\in K} u(t,x)< \beta^m\right) \leq& 2 m c(m)\leq \exp \left(- \Theta' m^{\alpha} \left(\log m \right)^{1+\alpha} \right)\,.
\end{align*}
Finally, by taking $m=|\log\epsilon|$, we complete the proof of Theorem \ref{T:Pos}.
\end{proof}

\appendix
\section{Appendix: Some technical lemmas}

Some technical lemmas are listed in this part.

\begin{lemma}\label{L:shatf}
If $g(t)$ is a monotone function over $[0,T]$, then for all $\beta>0$ and $t\in (0,T]$,
\begin{align}
\label{E:shatf1}
\int_0^t g(t-s)
\exp&\left(
-\frac{2 \beta s(t-s)}{t}\right)\ud s
=\int_0^t g(s)
\exp\left(-\frac{2\beta s(t-s)}{t}\right)\ud s\\[0.3em]
&\le
\begin{cases}
\displaystyle 2 \int_0^t g(s)e^{-\beta(t-s)}\ud s & \text{if $g$ is nondecreasing,}\\[0.8em]
\displaystyle  2 \int_0^t g(s)e^{-\beta s}\ud s & \text{if $g$ is nonincreasing.}
\end{cases}
\label{E:shatf2}
\end{align}
\end{lemma}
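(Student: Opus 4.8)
The plan is to establish the equality \eqref{E:shatf1} first and then derive the two-sided bound \eqref{E:shatf2} by splitting the integral at the midpoint $s=t/2$. The key structural fact throughout is that the quadratic $s\mapsto s(t-s)$ is symmetric about $s=t/2$, hence invariant under the reflection $s\mapsto t-s$. Applying this change of variables directly to $\int_0^t g(t-s)\exp\bigl(-2\beta s(t-s)/t\bigr)\,\ud s$ turns $g(t-s)$ into $g(s)$ while leaving the exponential factor unchanged, which is exactly \eqref{E:shatf1}; this step is entirely routine.

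For \eqref{E:shatf2}, I would first record the pointwise bound on the exponential factor on each half of $[0,t]$. On $[0,t/2]$ we have $t-s\ge t/2$, so $s(t-s)/t\ge s/2$ and hence $\exp\bigl(-2\beta s(t-s)/t\bigr)\le e^{-\beta s}$; symmetrically, on $[t/2,t]$ we have $s\ge t/2$, so $\exp\bigl(-2\beta s(t-s)/t\bigr)\le e^{-\beta(t-s)}$. Substituting these into the two halves of $\int_0^t g(s)\exp\bigl(-2\beta s(t-s)/t\bigr)\,\ud s$ gives $\int_0^{t/2} g(s)e^{-\beta s}\,\ud s+\int_{t/2}^t g(s)e^{-\beta(t-s)}\,\ud s$. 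The remaining task is to convert the ``wrong-half'' term into a term of the desired type. In the nondecreasing case I would rewrite $\int_0^{t/2} g(s)e^{-\beta s}\,\ud s=\int_{t/2}^t g(t-s)e^{-\beta(t-s)}\,\ud s$ by the reflection $s\mapsto t-s$, then use $g(t-s)\le g(s)$ for $s\ge t/2$ (since there $t-s\le s$) to bound it by $\int_{t/2}^t g(s)e^{-\beta(t-s)}\,\ud s$, and finally enlarge the domain back to $[0,t]$, obtaining $2\int_0^t g(s)e^{-\beta(t-s)}\,\ud s$. The nonincreasing case is the mirror image: one reflects the second half instead, uses $g(t-s)\le g(s)$ for $s\le t/2$ (since there $t-s\ge s$), collects everything on $[0,t/2]$, and enlarges to $[0,t]$, yielding $2\int_0^t g(s)e^{-\beta s}\,\ud s$.

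I do not anticipate a genuine obstacle; the proof is a short symmetrization argument. The only points that need a little care are the bookkeeping of which half gets reflected in each of the two monotonicity cases, and the observation that $g$ should be taken nonnegative — as it is in every application of this lemma — so that the final step of enlarging the integration domain from $[0,t/2]$ or $[t/2,t]$ back to $[0,t]$ is an inequality in the correct direction. Integrability of all the integrals is not an issue: monotonicity of $g$ makes it locally integrable on $(0,t]$, and in the applications $g$ is either bounded or an integrable power of $s$, so every quantity appearing above is finite.
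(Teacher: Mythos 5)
Your proposal is correct and follows essentially the same symmetrization argument as the paper: split at $s=t/2$, bound the exponential by $e^{-\beta s}$ on the first half and by $e^{-\beta(t-s)}$ on the second, reflect the ``wrong'' half via $s\mapsto t-s$, use monotonicity of $g$, and enlarge the domain (the paper merely handles the nonincreasing case by invoking \eqref{E:shatf1} to reduce it to the nondecreasing one, which is a cosmetic difference from your mirror-image argument). Your remark that $g$ must be nonnegative for the final domain enlargement (and in fact already for multiplying the pointwise exponential bounds by $g$) is well taken; the paper uses this implicitly, and it holds in every application of the lemma.
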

\begin{proof}
Equality \eqref{E:shatf1} is clear by change of variables.
We first assume that $g(t)$ is nondecreasing in $[0,T]$.
Denote the integral by $I$. Then
\begin{align*}
I=&\int_0^{t/2}
 g(s) \exp\left(- \frac{2\beta s(t-s)}{t}\right)
 \ud s
 +\int_{t/2}^t
 g(s) \exp\left(- \frac{2\beta s(t-s)}{t}\right)
 \ud s\\
 \le &
 \int_0^{t/2}
 g(s) \exp\left(- \beta s\right)
 \ud s
 +\int_{t/2}^t
 g(s) \exp\left(- \beta(t-s)\right)
 \ud s\\
 \le &
 \int_{t/2}^t
 g(t-s) \exp\left(- \beta (t-s)\right)
 \ud s
 +\int_{t/2}^t
 g(s) \exp\left(- \beta(t-s)\right)
 \ud s\\
 \le &
 2\int_{t/2}^t
 g(s) \exp\left(- \beta(t-s)\right)
 \ud s\\
 \le &
 2\int_{0}^t
 g(s) \exp\left(- \beta (t-s)\right)
 \ud s.
\end{align*}
If $g$ is nonincreasing in $[0,T]$,
we simply replace the above $g(s)$ by $g(t-s)$ thanks to \eqref{E:shatf1}.
This proves Lemma \ref{L:shatf}.
\end{proof}


\begin{lemma}\label{lem:mu inequality}
 Let $R^{\epsilon}$ be defined in \eqref{E:R epsilon}.
 {If $f$ satisfies \eqref{E:DalangAlpha} with $\alpha=1$,}
 then there exists a positive constant $C$ such that for all $0\le s, \epsilon \le t$ and $x\in\R^d$,
\begin{align*}
\iint_{\RR^{2d}}\ud y_1\ud y_2\: f(y_1-y_2) & \left(R^{\epsilon}(t-s,\cdot)*G(\epsilon,\cdot)\right)(x-y_1)\\
\times&\left(R^{\epsilon}(t-s,\cdot)*G(\epsilon,\cdot)\right)(x-y_2)  \leq C \,.
\end{align*}
\end{lemma}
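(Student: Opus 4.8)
The plan is to diagonalize the quadratic form on the left-hand side by the Fourier transform and to observe that the Fourier multipliers involved are all bounded by $1$; the bound will then be simply $(2\pi)^{-d}\int_{\R^d}\hat f(\ud\xi)$, which is finite precisely under \eqref{E:DalangAlpha} with $\alpha=1$.

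First I would remove the $x$-dependence. Set $\Phi_{\epsilon}(y):=\left(R^{\epsilon}(t-s,\cdot)*G(\epsilon,\cdot)\right)(y)$, a nonnegative, smooth function lying in $L^1(\R^d)\cap L^2(\R^d)$ (the $L^1$-mass of $R^{\epsilon}(t-s,\cdot)$ is $1-e^{-(t-s)/\epsilon}\le 1$). Substituting $z_i=x-y_i$ and using that $f$ is even, the left-hand side of the lemma equals
\[
\iint_{\R^{2d}}\ud z_1\ud z_2\: f(z_1-z_2)\,\Phi_{\epsilon}(z_1)\,\Phi_{\epsilon}(z_2).
\]
Since $f$, $\hat f$ and $\Phi_{\epsilon}$ are all nonnegative, the same Fourier computation used, e.g., in the proof of Lemma \ref{L:IntIneq} gives, as an identity in $[0,+\infty]$,
\[
\iint_{\R^{2d}}\ud z_1\ud z_2\: f(z_1-z_2)\,\Phi_{\epsilon}(z_1)\,\Phi_{\epsilon}(z_2)
=(2\pi)^{-d}\int_{\R^d}\bigl|\widehat{\Phi_{\epsilon}}(\xi)\bigr|^2\,\hat f(\ud\xi).
\]

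Next I would bound $\widehat{\Phi_{\epsilon}}$. One has $\widehat{\Phi_{\epsilon}}(\xi)=\widehat{R^{\epsilon}(t-s,\cdot)}(\xi)\,\hat G(\epsilon,\xi)$ with $\hat G(\epsilon,\xi)=\exp\bigl(-\epsilon|\xi|^2/2\bigr)\le 1$, and, since $\widehat{G(n\epsilon,\cdot)}(\xi)=\exp\bigl(-n\epsilon|\xi|^2/2\bigr)$, monotone convergence applied termwise to \eqref{E:R epsilon} yields
\[
0\le\widehat{R^{\epsilon}(t-s,\cdot)}(\xi)
=e^{-(t-s)/\epsilon}\sum_{n=1}^{\infty}\frac{\bigl((t-s)/\epsilon\bigr)^n}{n!}\,e^{-n\epsilon|\xi|^2/2}
\le e^{-(t-s)/\epsilon}\sum_{n=0}^{\infty}\frac{\bigl((t-s)/\epsilon\bigr)^n}{n!}=1.
\]
Hence $\bigl|\widehat{\Phi_{\epsilon}}(\xi)\bigr|^2\le 1$ for every $\xi$, so the left-hand side of the lemma is at most $(2\pi)^{-d}\int_{\R^d}\hat f(\ud\xi)=:C<\infty$, uniformly in $0\le s,\epsilon\le t$ and $x\in\R^d$ (indeed even uniformly in $t$), which is the claim.

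There is no genuine obstacle here: the only points requiring a little care are the two interchanges of summation and integration, both legitimate because every term is nonnegative, and the Fourier identity, which is standard for $\Phi_{\epsilon}\in L^1(\R^d)\cap L^2(\R^d)$ and a nonnegative-definite $f$. The estimate is, in effect, a clean consequence of Dalang's condition in the strengthened form $\alpha=1$.
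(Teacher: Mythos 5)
Your proof is correct and takes essentially the same route as the paper: diagonalize the quadratic form via the Fourier transform and bound the multipliers by $1$, so that the constant is just $(2\pi)^{-d}\int_{\R^d}\hat f(\ud\xi)$, finite by \eqref{E:DalangAlpha} with $\alpha=1$. The only cosmetic difference is that you bound $\widehat{R^{\epsilon}(t-s,\cdot)}(\xi)\le 1$ (sub-probability mass) before integrating in $\xi$, whereas the paper first drops the Gaussian factors and then evaluates the double series $\sum_{n,m}$ explicitly to cancel the prefactor $e^{-2(t-s)/\epsilon}$.
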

\begin{proof}
Denote the integral by $I$. Using Fourier transform
we have
\begin{align*}
I&\leq \int_{\RR^d} e^{-\frac{2(t-s)}{\epsilon}} \sum_{n,m=1}^{\infty} \frac{\left(\frac{t-s}{\epsilon}\right)^n}{n!}\frac{\left(\frac{t-s}{\epsilon}\right)^m}{m!} e^{-\frac{(n+m)\epsilon}{2} |\xi|^2} \hat{f}(\ud\xi)\\
&\leq C e^{-\frac{2(t-s)}{\epsilon}} \sum_{n,m=1}^{\infty} \left(\frac{t-s}{\epsilon}\right)^{m+n} 
\frac{1}{n!m!} \,.
\end{align*}
Letting $n+m=k$ and using the fact that
\begin{equation*}
\sum_{n=1}^{k-1} \frac{1}{n! (k-n)!} = \frac{1}{k!} (2^k-2)\,,
\end{equation*}
we see  that the above double sum is equal to 
\begin{align*}
 \sum_{k=1}^{\infty} \sum_{n=1}^{k-1} \left(\frac{t-s}{\epsilon}\right)^k \frac{1}{n! (k-n)!} 
&\leq  \sum_{k=1}^{\infty} \left(\frac{t-s}{\epsilon}\right)^k \frac{2^k}{k!} 
\le e^{\frac{2(t-s)}{\epsilon}} - 1\,,
\end{align*}     
which proves Lemma \ref{lem:mu inequality}.
\end{proof}

\begin{lemma}\label{lem:8.2}
There exists a finite constant $C > 0$ such that
\begin{equation}
\int_{\RR^d} |R^{\epsilon}(t,x)-G(t,x)|\ud x \leq e^{-t/\epsilon}+ C \left(\frac{\epsilon}{t}\right)^{1/2}\,, 
\end{equation}
and
\begin{equation}
\int_{\RR^d} |G(t+\epsilon,x)-G(t,x)|\ud x \leq C \log\left(1+\frac{\epsilon}{t}\right)\,,
\end{equation}
for all $\epsilon>0$ and $t>0$.
\end{lemma}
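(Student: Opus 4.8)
The plan is to derive both estimates from a single ingredient, namely the bound $\int_{\R^d}|\partial_t G(t,x)|\ud x\le C/t$. First I would establish this by scaling: since $G(t,x)=t^{-d/2}G(1,x/\sqrt t)$, differentiating in $t$ and substituting $y=x/\sqrt t$ shows $\int_{\R^d}|\partial_t G(t,x)|\ud x=\tfrac{1}{t}\cdot\tfrac12\int_{\R^d}\bigl|d\,G(1,y)+y\cdot\nabla G(1,y)\bigr|\ud y=:C/t$, a finite constant (equivalently, use $\partial_tG=\tfrac12\Delta G$ and recognize a $\chi^2_d$ absolute-deviation integral). Integrating in time then gives, for $0<s\le t$,
\[
\int_{\R^d}|G(s,x)-G(t,x)|\ud x\le\int_s^t\frac{C}{r}\,\ud r=C\log(t/s),
\]
and the second claim of the lemma is the special case $s=t$, $t\mapsto t+\epsilon$, since $\log((t+\epsilon)/t)=\log(1+\epsilon/t)$.

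For the first claim I would exploit the probabilistic meaning of $R^\epsilon$. From \eqref{E:R epsilon} we have $R^\epsilon(t,x)=\E\bigl[\one_{\{N\ge1\}}G(N\epsilon,x)\bigr]$, where $N$ is a Poisson random variable with mean $\lambda:=t/\epsilon$; in particular $\int_{\R^d}R^\epsilon(t,x)\ud x=1-e^{-t/\epsilon}$. Writing
\[
R^\epsilon(t,x)-G(t,x)=\E\bigl[\one_{\{N\ge1\}}\bigl(G(N\epsilon,x)-G(t,x)\bigr)\bigr]-\bbP(N=0)\,G(t,x),
\]
integrating the absolute value in $x$, using Tonelli, and applying the scale-comparison bound above, we obtain
\[
\int_{\R^d}|R^\epsilon(t,x)-G(t,x)|\ud x\le e^{-t/\epsilon}+C\,\E\bigl[\one_{\{N\ge1\}}\,\bigl|\log(N/\lambda)\bigr|\bigr].
\]
It then suffices to show $\E\bigl[\one_{\{N\ge1\}}|\log(N/\lambda)|\bigr]\le C/\sqrt\lambda=C\sqrt{\epsilon/t}$.

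For this last estimate I would split according to whether $N\in[\lambda/2,2\lambda]$. On that event $|\log(N/\lambda)|\le 2|N/\lambda-1|$, so its contribution is at most $\tfrac{2}{\lambda}\,\E|N-\lambda|\le\tfrac{2}{\lambda}\sqrt{\operatorname{Var}(N)}=2/\sqrt\lambda$. On the complement, with $N\ge1$, one uses $|\log(N/\lambda)|\le\log\lambda$ when $1\le N<\lambda/2$ and $|\log(N/\lambda)|\le N/\lambda$ when $N>2\lambda$, together with the Chernoff-type Poisson tails $\bbP(N\le\lambda/2)\le e^{-c\lambda}$ and $\E\bigl[N\one_{\{N>2\lambda\}}\bigr]\le\lambda e^{-c\lambda}$; these make the remaining contribution exponentially small in $\lambda$, hence $\le C/\sqrt\lambda$ once $\lambda$ exceeds a fixed constant. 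The regime $\lambda=t/\epsilon\le1$ is handled trivially: there $\int_{\R^d}|R^\epsilon(t,x)-G(t,x)|\ud x\le\int_{\R^d}R^\epsilon+\int_{\R^d}G\le2$, while $C\sqrt{\epsilon/t}\ge C\ge2$ for $C$ large.

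The routine part is the scaling bound of the first step and the deduction of the second estimate; the one place that needs genuine care is the Poisson moment bound $\E\bigl[\one_{\{N\ge1\}}|\log(N/\lambda)|\bigr]\lesssim\lambda^{-1/2}$, in particular controlling the small atoms $N\in\{1,\dots,\lfloor\lambda/2\rfloor\}$, where $|\log(N/\lambda)|$ can be as large as $\log\lambda$ but occurs only with super-exponentially small probability — this is exactly where the fact that $\int|G(s,\cdot)-G(t,\cdot)|$ grows only logarithmically across widely separated scales keeps things under control.
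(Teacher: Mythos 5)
Your argument is correct, and it is worth noting how it relates to what the paper actually does. For the second estimate your route is the paper's route: the paper uses $|\partial_t G(t,x)|\le C t^{-1}G(2t,x)$ and integrates in $x$ and then in $t$ to get the $\log(t'/t)$ bound, which is exactly your $\int_{\R^d}|\partial_t G(t,x)|\,\ud x\le C/t$ step. For the first estimate, however, the paper gives no argument at all — it simply defers to Lemma 8.2 of \cite{CK14Comp} — whereas you supply a complete, self-contained proof: you read $R^{\epsilon}(t,\cdot)$ off \eqref{E:R epsilon} as the Poisson mixture $\E[\one_{\{N\ge1\}}G(N\epsilon,\cdot)]$ with $N\sim\mathrm{Poisson}(\lambda)$, $\lambda=t/\epsilon$, reduce the problem via the same scale-comparison bound $\int_{\R^d}|G(s,x)-G(t,x)|\,\ud x\le C|\log(t/s)|$ to the moment estimate $\E[\one_{\{N\ge1\}}|\log(N/\lambda)|]\le C\lambda^{-1/2}$, and prove that by splitting at $N\in[\lambda/2,2\lambda]$ (where $|\log x|\le 2|x-1|$ and Cauchy--Schwarz give $2/\sqrt\lambda$) and using Poisson tail bounds on the complement. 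All the individual inequalities you invoke check out ($|\log x|\le 2|x-1|$ on $[1/2,2]$, $\E|N-\lambda|\le\sqrt\lambda$, $\E[N\one_{\{N>2\lambda\}}]=\lambda\,\bbP(N\ge m-1)\le\lambda e^{-c\lambda}$ for $\lambda$ above a fixed threshold, $\int_{\R^d}R^\epsilon(t,x)\,\ud x=1-e^{-\lambda}$). The one piece of bookkeeping to tidy up: your tail estimates only kick in for $\lambda\ge\lambda_0$ with some fixed $\lambda_0>1$, while you dispose of only $\lambda\le1$ trivially; but the same trivial bound $\int_{\R^d}|R^\epsilon(t,x)-G(t,x)|\,\ud x\le 2\le C/\sqrt\lambda$ covers all $\lambda\le\lambda_0$ after enlarging $C$ by $2\sqrt{\lambda_0}$, so this is cosmetic. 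The net effect of your approach is a proof of the $\sqrt{\epsilon/t}$ bound that does not require the reader to consult \cite{CK14Comp}, obtained from the single derivative bound that the paper already uses for the logarithmic estimate.
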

\begin{proof}
Because $\left | \frac{\partial }{\partial t} G(t,x) \right| \leq C t^{-1} G(2t, x)$, we see that
for any $0 < t \leq t'$,
\begin{align*}
\int_{\RR^d} |G(t', x)-G(t,x)|\ud x &\leq \int_{\RR^d} \ud x
\int_t^{t'} \ud s |\frac{\partial }{\partial s} G(s,x)|\\
& \leq C \int_{\RR^d} \ud x \int_t^{t'} \ud s\: s^{-1} G(2s, x) \\
& \leq C \log \left(t'/t\right).
\end{align*}
The rest of the proof will follow exactly the same lines as those in the proof of Lemma 8.2 in \cite{CK14Comp}
and we will not repeat here.
\end{proof}

\begin{lemma}\label{L:gtx}
The function $g(t,x):=\int_0^t (2\pi s)^{-d/2} \exp\left(-\frac{x^2}{2s}\right)\ud s$, for $t,x\ge 0$, satisfies the following properties,
\begin{enumerate}[(1)]
\item $x\mapsto g(t,x)$ is strictly decreasing functions on $x\in(0,\infty)$.
\item If $d=1$, then $g(t,x)$ doesn't blow up at $x=0$ and $g(t,x)\le g(t,0)= \sqrt{2t/\pi}$.
If $d\ge 2$, then $g(t,x)$ blows up at $x=0$.
\item If $d=1,2$, then for all $\theta >0$ and $t>0$,
\begin{align}\label{E:intg}
\int_{\R^d } g(t,|x| )^\theta \ud x <\infty.
\end{align}
\item If $d\ge 3$, then for all $0< \theta<\frac{d}{d-2}$ and $t>0$, \eqref{E:intg} holds.
\end{enumerate}
\end{lemma}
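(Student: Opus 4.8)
The plan is to treat $g$ as the time--integrated heat kernel, $g(t,|x|)=\int_0^t G(s,x)\,\ud s$, and to read off all four assertions from elementary properties of the integrand together with a single scaling substitution; no probability is involved.

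First I would dispose of Parts (1) and (2), which need essentially no work. For Part (1), for each fixed $s\in(0,t)$ the map $r\mapsto(2\pi s)^{-d/2}e^{-r^2/(2s)}$ is strictly decreasing on $(0,\infty)$, so integrating over $s\in(0,t)$ shows $r\mapsto g(t,r)$ is strictly decreasing. For Part (2), at $r=0$ the integral becomes $g(t,0)=\int_0^t(2\pi s)^{-d/2}\,\ud s$; since $s^{-d/2}$ is integrable near $s=0$ exactly when $d<2$, this equals $\sqrt{2t/\pi}$ when $d=1$ (and then $g(t,r)\le g(t,0)$, e.g.\ by Part (1) or from $e^{-r^2/(2s)}\le1$), and it is $+\infty$ when $d\ge2$. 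For $d\ge2$ the monotone convergence theorem applied as $r\downarrow0$ (the integrands increase to $(2\pi s)^{-d/2}$) gives $g(t,r)\to g(t,0)=+\infty$, i.e.\ $g$ blows up at the origin.

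The substance is in Parts (3)--(4), and my plan is to split $\int_{\R^d}g(t,|x|)^\theta\,\ud x$ into the regions $\{|x|>1\}$ and $\{|x|\le1\}$. On the tail I would prove a dimension--uniform Gaussian bound: for $r\ge1$ and $s\le t$ one has $e^{-r^2/(2s)}\le e^{-r^2/(4t)}e^{-1/(4s)}$ (using $s\le t$ and $r\ge1$ on the two halves of $r^2/(2s)$), so $g(t,r)\le C_{t,d}\,e^{-r^2/(4t)}$ with $C_{t,d}=\int_0^t(2\pi s)^{-d/2}e^{-1/(4s)}\,\ud s<\infty$; hence the tail contributes a finite amount for every $\theta>0$ and every $d$. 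Near the origin I would use the scaling $s=r^2\sigma$, which yields
\[
g(t,r)=r^{2-d}\int_0^{t/r^2}(2\pi\sigma)^{-d/2}e^{-1/(2\sigma)}\,\ud\sigma .
\]
When $d=1$ this (or Part (2)) gives $g$ bounded, so the near--origin integral is finite for all $\theta>0$. When $d\ge3$ the $\sigma$--integral is dominated by the convergent $C_d:=\int_0^\infty(2\pi\sigma)^{-d/2}e^{-1/(2\sigma)}\,\ud\sigma$, so $g(t,r)\le C_d\,r^{2-d}$, and $\int_{|x|\le1}|x|^{(2-d)\theta}\,\ud x$ is finite precisely when $(2-d)\theta>-d$, i.e.\ $\theta<d/(d-2)$. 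When $d=2$ the substitution $u=1/(2\sigma)$ rewrites the display as $\tfrac1{2\pi}\int_{r^2/(2t)}^\infty u^{-1}e^{-u}\,\ud u$, which for $0<r\le1$ is $O(1+|\log r|)$, and $\int_{|x|\le1}(1+|\log|x||)^\theta\,\ud x<\infty$ in $\R^2$ for every $\theta>0$.

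I expect the only mildly delicate point to be identifying the sharp threshold $\theta<d/(d-2)$ in Part (4) (and the logarithmic borderline $d=2$ in Part (3)); both reduce to combining the scaling identity above with the elementary fact that $|x|^{-\gamma}$ is locally integrable at the origin in $\R^d$ iff $\gamma<d$. Everything else is routine one--variable estimation.
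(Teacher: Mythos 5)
Your proposal is correct and follows essentially the same route as the paper: the scaling substitution $s=r^2\sigma$ (equivalently the paper's $z=x^2/(2s)$, which yields the incomplete-Gamma representation) gives the near-origin behaviour $r^{2-d}$ for $d\ge3$, logarithmic for $d=2$, bounded for $d=1$, and a Gaussian bound handles the tail, so the thresholds $\theta<d/(d-2)$ and ``all $\theta>0$'' follow exactly as in the paper. The only cosmetic differences are your single dimension-uniform tail estimate $e^{-r^2/(2s)}\le e^{-r^2/(4t)}e^{-1/(4s)}$ in place of the paper's dimension-by-dimension tail bounds, and monotone convergence at $x=0$ in part (2) instead of reading the blow-up off the explicit representation.
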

\begin{proof}
(1) It is clear $x\mapsto g(t,x)$ is a nonincreasing function on $(0,\infty)$ because
\[
\frac{\partial }{\partial x} g(t,x)=
- \int_0^t (2\pi s)^{-d/2} \frac{x}{s}\exp\left(-\frac{x^2}{2s}\right)\ud s< 0,\quad\text{for $x>0$.}
\]
(2) If $d=1$, then by (1), we see that $g(t,x)\le g(t,0)=\sqrt{2t/\pi}$.
By change of variables $z=x^2/(2s)$,
\begin{align}\label{E:gtx}
g(t,x)= \frac{1}{2\pi^{d/2}} x^{2-d} \int_{\frac{x^2}{2t}}^\infty e^{-z} z^{\frac{d}{2}-2} \ud z.
\end{align}
If $d=2$, then the integral in \eqref{E:gtx} blows up as $x\rightarrow 0_+$.
When $d\ge 3$,
\begin{align}
\label{E:gtx_bds}
g(t,x)\le
\frac{1}{2\pi^{d/2}} x^{2-d} \int_{0}^\infty e^{-z} z^{\frac{d}{2}-2} \ud z =
\frac{\Gamma(d/2-1)}{2\pi^{d/2}} x^{2-d},
\end{align}
which blows up as $x\rightarrow 0_+$. \\
(3) 
 { If $d=1$, for all $t>0$ and $x\ge 0$,
 \begin{align*}
 g(t,x)\le& \frac{1}{\sqrt{2\pi}}  e^{-\frac{x^2}{2t}} \int_{0}^{t} \frac{1}{\sqrt{s}}\ud s
 = \frac{\sqrt{2 t}}{\sqrt{\pi}}  e^{-\frac{x^2}{2t}}\,,
 \end{align*}
which shows \eqref{E:intg} for $d=1$. 
}
If $d=2$, then
\[
g(t,x) = \frac{1}{2\pi}\int_{x^2/(2t)}^\infty e^{-z}z^{-1}\ud z.
\]
Then by l'Hopital's rule,
\[
\lim_{x\rightarrow 0_+} \frac{g(t,x)}{\log(1/x)} =
\frac{1}{2\pi}\lim_{x\rightarrow 0_+}\frac{-e^{-\frac{x^2}{2t}}\frac{2t}{x^2}\: \frac{x}{t}}{-1/x}=\frac{1}{\pi}.
\]
While for $x \geq 1$, 
\begin{align*}
g(t,x)=& \frac{1}{2\pi}\int_{\frac{x^2}{2t}}^{\infty} e^{-z} z^{-\frac{3}{2}}\ud z\leq \frac{1}{2\pi} \left( \frac{x^2}{2t}\right)^{-\frac{3}{2}} \int_{\frac{x^2}{2t}}^{\infty} e^{-z} \ud z \leq \frac{(2t)^{3/2}}{2\pi}e^{-\frac{x^2}{2t}}\,,
\end{align*}
from which we can conclude (3).\\
(4) For $d\geq 3$, note that there is a constant $C_d> 0$ which only depends on $d$ such that $z^{\frac{d}{2}-2} e^{-z} \leq C_d e^{-\frac{z}{2}}$ for all $z\geq 0$. Then for $x\geq 1$, 
\begin{align*}
g(t,x)= \frac{1}{2\pi^{d/2}} x^{2-d} \int_{\frac{x^2}{2t}}^{\infty} e^{-z} z^{\frac{d}{2}-2}\ud z \leq \frac{C_d}{2\pi^{d/2}} \int_{\frac{x^2}{2t}}^{\infty} e^{-\frac{z}{2}}\ud z \leq \frac{C_d}{\pi^{d/2}} e^{-\frac{x^2}{4t}}\,,
\end{align*}
this shows that for any $\theta >0$, 
\begin{equation}
\int_{|x|\geq 1} g(t,|x|)^{\theta} \ud x < \infty\,.
\end{equation}
The restriction that $\theta < \frac{d}{d-2}$ comes from the integrability on $|x|\leq 1$, which is clear from the upper bound of $g(t,x)$ in \eqref{E:gtx_bds}.
This completes the proof of Lemma \ref{L:gtx}.
\end{proof}

\begin{lemma}\label{L: h appro}
Recall the function $g(t,x)$ is defined in Lemma \ref{L:gtx}.
{Let $\psi\in C_c(\R^d)$ be an arbitrary mollifier}
such that $\int_{\R^d}\psi(x)\ud x=1$.
Denote 
$\psi_\epsilon(x) = \epsilon^{-d} \psi(x/\epsilon)$. For each fixed $t>0$,
suppose that $h:\R^d\mapsto\R_+$ is a nonnegative and measurable
function such that
\[
\int_{\R^d} h(x)g(2t,|x|)\ud x <\infty.
\]
Then the following statements hold:
\begin{enumerate}[(1)]
 \item For any $\eta>0$, there exists $\phi\in C_c(\R^d)$
 such that 
\[
\sup_{\epsilon\in (0,\sqrt{t})}\int_{\R^d} g_\epsilon(t,|x|)\left|h(x) - \phi(x)\right|\ud x <\eta,
\]
where $g_\epsilon(t,|x|) = \int_{\R^d} g(t,|y|)\psi_\epsilon(x-y)\ud y$.
 \item By denoting $h_\epsilon(x)=(h*\psi_\epsilon)(x)$, we have that
 \[
\lim_{\epsilon\rightarrow 0}
\int_{\R^d}{g(t,|x|)}\left|h(x)-h_\epsilon(x)\right|\ud x =0.
\]
\end{enumerate}
\end{lemma}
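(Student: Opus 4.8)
The plan is to reduce both parts to a single domination estimate: for all $\epsilon\in(0,\sqrt t)$ and all $x\in\R^d$,
\[
g_\epsilon(t,|x|)=\bigl(g(t,|\cdot|)*\psi_\epsilon\bigr)(x)\le C\,g(2t,|x|),
\]
where $C$ depends only on $\|\psi\|_\infty$ and the diameter of $\spt{\psi}$. To prove this I would first record that, by definition, $g(t,|\cdot|)=\int_0^t G(s,\cdot)\,\ud s$ with $G$ the heat kernel, and that compactness of $\spt{\psi}$ gives $|\psi(x)|\le C_\psi\,G(1,x)$ for every $x$, hence $|\psi_\epsilon(x)|\le C_\psi\,G(\epsilon^2,x)$. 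The semigroup identity $G(s,\cdot)*G(\epsilon^2,\cdot)=G(s+\epsilon^2,\cdot)$ then yields
\[
\bigl(g(t,|\cdot|)*|\psi_\epsilon|\bigr)(x)\le C_\psi\int_0^t G(s+\epsilon^2,x)\,\ud s=C_\psi\int_{\epsilon^2}^{t+\epsilon^2}G(r,x)\,\ud r\le C_\psi\,g(t+\epsilon^2,|x|),
\]
and since $\epsilon^2<t$ and $t\mapsto g(t,x)$ is nondecreasing, the last quantity is $\le C_\psi\,g(2t,|x|)$. Because $g\ge 0$, the same bound holds for $g_\epsilon=g(t,|\cdot|)*\psi_\epsilon$ itself.

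For part (1), I would observe that Lemma \ref{L:gtx}, parts (3)--(4) with $\theta=1$ (noting $1<d/(d-2)$ when $d\ge 3$), gives $g(2t,|\cdot|)\in L^1(\R^d)$; thus $\nu(\ud x):=g(2t,|x|)\,\ud x$ is a finite Borel measure on $\R^d$, $h\in L^1(\nu)$ by hypothesis, and $C_c(\R^d)$ is dense in $L^1(\nu)$. Given $\eta>0$, pick $\phi\in C_c(\R^d)$ with $\int_{\R^d}|h-\phi|\,g(2t,|x|)\,\ud x<\eta/C$; the domination estimate then yields $\int_{\R^d}g_\epsilon(t,|x|)\,|h-\phi|\,\ud x\le C\int_{\R^d}g(2t,|x|)\,|h-\phi|\,\ud x<\eta$, uniformly in $\epsilon\in(0,\sqrt t)$.

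For part (2), fix $\eta>0$, take $\phi\in C_c(\R^d)$ as above, set $\phi_\epsilon:=\phi*\psi_\epsilon$, and split
\[
\int_{\R^d}g(t,|x|)\,|h-h_\epsilon|\,\ud x\le\int_{\R^d}g(t,|x|)\,|h-\phi|\,\ud x+\int_{\R^d}g(t,|x|)\,|\phi-\phi_\epsilon|\,\ud x+\int_{\R^d}g(t,|x|)\,|\phi_\epsilon-h_\epsilon|\,\ud x.
\]
The first term is $<\eta$ since $g(t,|\cdot|)\le g(2t,|\cdot|)$. For the third term, $|\phi_\epsilon-h_\epsilon|\le|\phi-h|*|\psi_\epsilon|$, so by Tonelli it equals $\int_{\R^d}|\phi(w)-h(w)|\,\bigl(g(t,|\cdot|)*\check\psi_\epsilon\bigr)(w)\,\ud w$ with $\check\psi_\epsilon(u):=|\psi_\epsilon(-u)|$; since $\check\psi_\epsilon\le C_\psi\,G(\epsilon^2,\cdot)$ (evenness of $G(1,\cdot)$), the same computation as above bounds this by $C_\psi\int_{\R^d}|\phi-h|\,g(2t,|x|)\,\ud x<C_\psi\eta$. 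For the middle term, $\phi$ is uniformly continuous and $\int\psi=1$, so $\phi_\epsilon\to\phi$ uniformly, while for $\epsilon<1$ every $\phi_\epsilon$ is supported in a fixed compact set $K$; hence it is $\le\|\phi-\phi_\epsilon\|_\infty\int_K g(t,|x|)\,\ud x\to 0$ as $\epsilon\to0$. Taking $\limsup_{\epsilon\to0}$ and then $\eta\downarrow0$ finishes the proof.

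The only genuinely substantive point is the domination estimate $g_\epsilon(t,|\cdot|)\le C\,g(2t,|\cdot|)$ together with the integrability $g(2t,|\cdot|)\in L^1(\R^d)$ (which is what makes $C_c(\R^d)$ dense in $L^1(\nu)$); once these are in place, parts (1) and (2) are a routine weighted-$L^1$ mollification argument. A minor care-point is that $\psi$ need not be nonnegative, so one should work with $|\psi_\epsilon|$ throughout and, in the third term of part (2), with the reflected kernel $|\psi_\epsilon(-\cdot)|$; this is harmless because the Gaussian majorant $C_\psi\,G(\epsilon^2,\cdot)$ is even and absorbs $\int|\psi|$ into the constant.
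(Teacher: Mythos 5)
Your proof is correct, and its backbone is the same as the paper's: the key estimate $g_\epsilon(t,|\cdot|)\le C\,g(2t,|\cdot|)$, obtained from $|\psi_\epsilon|\le C_\psi G(\epsilon^2,\cdot)$ and the semigroup property, is exactly the paper's bound \eqref{E:g epsilon bd}, and your three-term splitting in part (2) (with $\phi_\epsilon=\phi*\psi_\epsilon$) is the paper's decomposition $I_1+I_2+I_3$ in different order, including the Fubini/reflection step that reduces the $|\phi_\epsilon-h_\epsilon|$ term back to part (1). Where you genuinely diverge is part (1): the paper proves the existence of $\phi\in C_c(\R^d)$ by hand, first approximating $h$ monotonically by simple functions, then invoking Lusin's theorem and H\"older's inequality, which forces it to use the integrability of $g(2t,|\cdot|)^q$ for some exponent $q>1$ (hence the restriction $q<d/(d-2)$ for $d\ge 3$ from Lemma \ref{L:gtx}). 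You instead observe that Lemma \ref{L:gtx} with $\theta=1$ already makes $\nu(\ud x)=g(2t,|x|)\,\ud x$ a finite (hence Radon) Borel measure, so $h\in L^1(\nu)$ and the standard density of $C_c(\R^d)$ in $L^1(\nu)$ produces $\phi$ directly; the uniform-in-$\epsilon$ statement then follows from the domination bound alone. Your route is shorter and needs only $\theta=1$ rather than some $\theta>1$, at the cost of citing the density theorem whose proof is essentially the simple-function/Lusin argument the paper writes out; the paper's version is self-contained. Your extra care in working with $|\psi_\epsilon|$ and the reflected kernel $|\psi_\epsilon(-\cdot)|$, since $\psi$ is not assumed nonnegative, is a correct refinement of a point the paper passes over silently.
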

\begin{proof}
Without loss of generality, we may assume that $t=1$. 

{\noindent(1)~} 
Fix $\eta>0$.
It is clear that for some constant $C>0$, we have 
\[
\psi(x)\le C G(1,x),\quad\text{for all $x\in\R^d$.}
\]
Hence, $\psi_\epsilon(x)\le C G(\epsilon^2,x)$, which implies that
\begin{align}
g_\epsilon(1,|x|) 
& \le C\int_{\R^d}\ud y\:
G(\epsilon^2,x-y) \int_{0}^1\ud s\: G(s,y)\notag\\
&= C  \int_{0}^1\ud s \:G(s+\epsilon^2,x)\notag\\
&= C  \int_{\epsilon^2}^{1+\epsilon^2}\ud s \:G(s,x)\le C g(2, |x|)\,,\label{E:g epsilon bd}
\end{align}
where the last inequality is due to the definition of $g(t,x)$ and $\epsilon\in (0,1)$.
Since $h$ is nonnegative, it is known that one can 
find a monotone nondecreasing sequence $\{s_j\}$
of simple functions such that $s_j(x)\uparrow h(x)$
pointwise; see, e.g., Theorem 1.44 in \cite{Adam03SecondEd}.
Hence,
\begin{align*}
\sup_{\epsilon\in(0,1)}\int_{\R^d}g_\epsilon(1,|x|)\left|h(x) - s_j(x)\right|\ud x 
&<
C\int_{\R^d}g(2,|x|)\left|h(x) - s_j(x)\right|\ud x
\rightarrow\infty 
\end{align*}
as $j\rightarrow\infty$, where the last limit is 
due to the dominated convergence.
Hence, for some $s\in \{s_j\}$, 
\[
\sup_{\epsilon\in(0,1)}\int_{\R^d}g_\epsilon(1,|x|)\left|h(x) - {s(x)}\right|\ud x 
\le \eta/2.
\]
Now we choose and fix $q>1$ such that 
\begin{equation}
C(g, d, q):=\int_{\RR^d} g(t,|x|)^q \ud x < \infty\,.
\end{equation}
This is possible thanks to Lemma \ref{L:gtx}:
$q> 1$ can be any number for $d=1,2$ and $ q\in (1, \frac{d}{d-2})$ for $d\ge 3$. 
Since $s$ is a simple function with bounded support,
by Lusin's theorem (see e.g., Theorem 1.42 (f) in \cite{Adam03SecondEd})
there exists $\phi\in C_c(\R^d)$ such that
\[
|\phi(x)|\le \Norm{s}_{L^\infty(\R^d)},\quad\text{for all $x\in\R^d$}
\]
and  
\[
\text{Vol}\left(\left\{
x\in\R^d:\: \phi(x)\ne s(x)\right\}
\right)\le
\frac{\eta^p}{\left(4C \Norm{s}_{L^\infty(\R^d)} C(g,d,q)^{1/q}\right)^p}\,,
\]
where $1/p+ 1/q=1$ and $C$ is as in \eqref{E:g epsilon bd}.
Thus, using \eqref{E:g epsilon bd} and the H\"older inequality, 
\begin{align*}
\sup_{\epsilon\in (0,1)}&\int_{\R^d} g_\epsilon(1,|x|)
\left|s(x) - \phi(x)\right|\ud x\\
&\le 
C \int_{\R^d} g(2,|x|)
\left|s(x) - \phi(x)\right|\ud x\\
&\le 2 C \Norm{s}_{L^\infty(\R^d)}
\int_{\R^d}
\one_{\left\{
x\in\R^d:\: \phi(x)\ne s(x)\right\}}g(2,|x|)\ud x\\
&\leq 2 C \Norm{s}_{L^\infty(\R^d)} \left( \int_{\RR^d} \one_{\left\{ x \in \RR^d: \: \phi(x)\ne s(x) \right\}} \ud x\right)^{\frac{1}{p}} \left(\int_{\RR^d} g(2, |x|)^q\ud x \right)^{\frac{1}{q}}\\
&\leq \frac{\eta}{2}\,.
\end{align*}
This completes the proof of (1). 

{\bigskip\noindent(2)~} For any $\eta>0$, 
we can write 
\begin{align*}
\int_{\RR^d} |h_{\epsilon}(x)-h(x)| g(1,|x|)\ud x
=&\int_{\RR^d} \left| \int_{\RR^d} \psi_{\epsilon}(x-y)\left[h(y)-h(x)\right]\ud y\right| g(1,|x|)\ud x\\
=&\int_{\RR^d} \left| \int_{\RR^d} \psi_{\epsilon}(x-y)\left[h(y)-\phi(y)\right]\ud y\right| g(1,|x|)\ud x\\
&+\int_{\RR^d} \left| \int_{\RR^d} \psi_{\epsilon}(x-y)\left[\phi(y)-\phi(x)\right]\ud y\right| g(1,|x|)\ud x\\
&+ \int_{\RR^d} \left| \int_{\RR^d} \psi_{\epsilon}(x-y)\left[\phi(x)-h(x)\right]\ud y\right| g(1,|x|)\ud x\\
:=& I_1 + I_2 + I_3\,.
\end{align*}
For $I_1$, choose $\phi \in C_c(\RR^d)$ according to (1), such that $I_1< \frac{\eta}{3}$. 
From the proof of (1) it is obvious that with the same choice of $\phi$, $I_3< \frac{\eta}{3}$.
For $I_2$, since $\psi$ is compactly supported, we may choose $\epsilon_0>0$ 
such that whenever $0 < \epsilon < \epsilon_0$, 
we have $I_2< \frac{\eta}{3}$ because of the uniform continuity of $\phi$.  
This completes the proof of (2). 
\end{proof}

\section*{Acknowledgements}
\addcontentsline{toc}{section}{Acknowledgements}
Both authors appreciate some stimulating discussions with Davar Khoshnevisan. The first author also thanks Kunwoo Kim for many interesting discussions when both of them were in Utah in the year of 2014.

\begin{small}
\bigskip

\begin{minipage}{0.4\textwidth}
\noindent\textbf{Le Chen}\\
\noindent Department of Mathematics\\
\noindent University of Kansas\\
\noindent Lawrence, Kansas, 66045-7594\\
\noindent\emph{Email:} \texttt{chenle@ku.edu}\\
\noindent\emph{URL:} \url{www.math.ku.edu/u/chenle/}\\
\end{minipage}
\begin{minipage}{0.6\textwidth}
\noindent\textbf{Jingyu Huang}\\
\noindent Department of Mathematics\\
\noindent University of Utah\\
\noindent Salt Lake City, UT 84112-0090\\
\noindent\emph{Email:} \texttt{jhuang@math.utah.edu}\\
\noindent\emph{URL:} \url{http://www.math.utah.edu/~jhuang/}\\
\end{minipage}

\end{small}

\end{document}